\newtheorem{thm}{Theorem}[section]
\newtheorem{lem}[thm]{Lemma}
\newtheorem{prop}[thm]{Proposition}
\newtheorem{cor}[thm]{Corollary}
 \newtheorem{assumption}{Assumption}
 \newtheorem*{definition}{Definition}
 \newtheorem{rem}{Remark}
 \newtheorem{example}{Example}
 \newtheorem*{rem*}{Remark}
 \newtheorem*{thm*}{Theorem}
 \newtheorem{proposition}{Proposition}
 \newcommand{\BRP}{BRP}
\newcommand{\EIP}{EIP}
\begin{document}
\title[Posterior Consistency for Bayesian Inverse Problems]{Posterior Consistency for Bayesian Inverse Problems through Stability and Regression Results}

\author{Sebastian J. Vollmer \footnote{Present address: Department of Statistics, University of Oxford, 1 South Parks Road, Oxford OX1 3TG.}}

\address{Mathematics Institute, Zeeman Building, University of Warwick,
Coventry CV4 7AL, UK.}

\ead{Sebastian.Vollmer@stats.ox.ac.uk}

\global\long\def\data{y}
\global\long\def\smallBallAsymp{\rho}
\global\long\def\noise{\xi}
\global\long\def\input{a}
\global\long\def\truth{a^{\dagger}}
\global\long\def\prior{\mu_{0}}
\global\long\def\posterior{\mu^{\data_{n}}}
\global\long\def\priorCov{\mathcal{C}_{0}}
\global\long\def\nrOfObs{k}
\global\long\def\obsCov{\mathcal{C}_{1}}
\global\long\def\spInput{X}
\global\long\def\spResponse{Y}
\global\long\def\opObs{\mathcal{G}}
\global\long\def\betterTrace{\sigma_{0}}
\global\long\def\noiseMeasure{\mathbb{\mu_{\xi}}}
\global\long\def\nrOfObs{k}
\global\long\def\obsCov{\Gamma}
\global\long\def\pressure{p}
\global\long\def\pressureSpace{V}

\global\long\def\HilbertScalesInter{\lambda}
\global\long\def\HilbertScalesYoungs{\eta}
\global\long\def\nRadiusSnorm{\theta}
\global\long\def\deltarate{\kappa}
\global\long\def\priorSmoothness{s}
\global\long\def\truthSmoothness{\tau}
\global\long\def\priorTails{r}

\begin{abstract}
In the Bayesian approach, the a priori knowledge about the input of
a mathematical model is described via a probability measure. The joint
distribution of the unknown input and the data is then conditioned,
using Bayes\textquoteright{} formula, giving rise to the posterior
distribution on the unknown input. In this setting we prove posterior
consistency for nonlinear inverse problems: a sequence of data is
considered, with diminishing fluctuations around a single truth and
it is then of interest to show that the resulting sequence of posterior
measures arising from this sequence of data concentrates around the
truth used to generate the data. Posterior consistency justifies the
use of the Bayesian approach very much in the same way as error bounds
and convergence results for regularisation techniques do.
As a guiding example, we consider the inverse problem of reconstructing
the diffusion coefficient from noisy observations of the solution
to an elliptic PDE in divergence form. This problem is approached
by splitting the forward operator into the underlying continuum model
and a simpler observation operator based on the output of the model. 

In general, these splittings allow us to conclude posterior consistency
provided a deterministic stability result for the underlying inverse
problem and a posterior consistency result for the Bayesian regression
problem with the push-forward prior. 

Moreover, we prove posterior consistency for the Bayesian regression
problem based on the regularity, the tail behaviour and the small
ball probabilities of the prior.

\end{abstract}

\ams{35R30, 62C10, 62G20}
\submitto{\IP}

\maketitle
\section{Introduction}
Many mathematical models used in science and technology contain parameters
for which a direct observation is very difficult. A good example is
subsurface geophysics. The aim in subsurface geophysics is the reconstruction
of subsurface properties such as density and permeability given measurements
on the surface.Using the laws of physics, these properties
can be used as parameters of a forward model mapping them to the
measurements which we subsequently call data.

Inverting such a relationship is non-trivial and lies in the focus
of the area of inverse problems. Classically, these parameters
are estimated by minimisation of a regularised least squares functional
which is based on the data output mismatch (Tikhonov).
The idea of this approach is to use optimisation techniques aiming
at parameters that produce nearly the same noiseless output as the
given noisy data while being not too irregular. However,
it is difficult to quantify how the noise in the data translates into
the uncertainty of the reconstructed parameters for this method. Uncertainty
quantification is much more straightforward in the Bayesian approach.
The basic idea of the Bayesian method is that not all parameter choices
are a priori equally likely. Instead, the parameters are artificially
treated as random variables by modelling their distribution using
a priori knowledge. This distribution is accordingly called the prior.
For a specific forward model and given the distribution of the observational
noise, the parameters and the data can be treated as jointly varying
random variables. Under mild conditions, the prior can then be updated
by conditioning the parameters on the data.

The posterior is one of the main tools for making inference about
the parameters. Possible estimates include approximation of the posterior
mean or the maximum a posteriori (MAP) estimator. Moreover,
it is possible to quantify the uncertainty of the reconstructed parameter
by posterior variance or posterior probability of a set around for
example an estimate of the parameters under consideration.

The main focus of this article lies on posterior consistency which
quantifies the quality of the resulting posterior in a thought experiment.
As for any evaluation for an approach to inverse problems, an identical
twin experiment is performed, that is for a fixed set of parametersand
artificial data is generated. It is conceivable to expect that, under
appropriate conditions, the posterior concentrates around this set
of 'true' parameters. Results of this type are called posterior consistency.
It justifies the Bayesian method by establishing that this method
recovers the 'true' parameters sometimes with a specific rate.

So far, there are only posterior consistency results available for
linear forward models and mainly Gaussian priors \cite{knapik2011bayesian,2012arXiv1203.5753A,ray2012bayesian,florens2012regularized}.
In this article, we prove posterior consistency of nonlinear inverse
problems with explicit bounds on the rate. The main idea behind our
posterior consistency results is to use stability properties of the
deterministic inverse problem to reduce posterior consistency of a
nonlinear inverse problem to posterior consistency of a Bayesian non-parametric
regression problem. Our guiding example is the inverse problem of
reconstructing the diffusion coefficient from measurements of the
pressure. More precisely, we assume that the relation between the
diffusion coefficient $a$ and the pressure $p$ satisfies the following
partial differential equation (PDE) with Dirichlet boundary conditions
\begin{eqnarray}\label{eq:postConEllipticForward}
\cases{ -\nabla \cdot(\input\nabla\pressure) =f(x)\quad\mbox{ in }D\\ \quad\pressure = 0 \quad \qquad\qquad\quad\mbox{on }\partial D}
\end{eqnarray}
where $D$ is a bounded smooth domain in $\mathbb{R}^{d}.$ For this
guiding example the required stability results are due to \cite{MR628945}.
However, our methods are generally applicable to inverse problems
with deterministic stability results. These are often available in
the literature because they are also needed for convergence results
of the Tikhonov regularisation (consider for example Theorem 10.4.
in \cite{inverseProblem}). Finally, we complete our reasoning by
proving appropriate posterior consistency results for the corresponding
Bayesian non-parametric regression problem.

\subsection*{Structure of this Article}

In Section \ref{sec:Preliminary}, we both review preliminary material
and give a detailed exposition of our main ideas, steps and results. In Section \ref{sec:General-Results}, we provide novel posterior
consistency results for Bayesian non-parametric regression. In order
to evaluate the rate for the regression problem, we compare our rates
to those for Gaussian priors for which optimal rates are known. These
results are needed in order to obtain posterior consistency for the
elliptic inverse problem in Section \ref{sec:Results-for-Elliptic}.
We obtain explicit rates for priors based on a series expansion with
uniformly distributed coefficients. In Section \ref{sec:Conclusion},
we draw a conclusion and mention other inverse problems to which this
approach is applicable. The appendix contains a detailed summary of
relevant technical tools such as Gaussian measures and Hilbert scales
which are used in the proofs of our main results.

\ack
The author would like to thank Professor Martin Hairer, Professor
Andrew Stuart, Dr. Hendrik Weber and Sergios Agapiou for helpful discussions.
SJV is grateful for the support of an ERC scholarship.

\section{Preliminaries and Exposition to Posterior Consistency for Nonlinear
Inverse Problems\label{sec:Preliminary}}

Our crucial idea for proving posterior consistency for a nonlinear
Bayesian inverse problem is the use of stability results which allow
us to break it down to posterior consistency of a Bayesian regression
problem. Because the proofs are quite technical, it is worth becoming
familiar with the outline of our main ideas first. Therefore this
section is intended to motivate, review and summarise our investigation
of posterior consistency for a nonlinear inverse problem leaving technical
details to the Sections \ref{sec:General-Results} and \ref{sec:Results-for-Elliptic}.
For the convenience of the reader we also repeat the relevant material
on Bayesian inverse problems in Section \ref{sub:ReviewBayesian}
without proofs, thus making our exposition self-contained. In Section
\ref{sub:Posterior-Consistency}, we precisely define posterior consistency
in this setting and place it within the literature. \textbf{S}ubsequently,
we introduce an elliptic inverse problem as guiding example for which
we apply our method using stability results from \cite{MR628945}.

Finally, we conclude our exposition by giving a general abstract theorem
of posterior consistency for nonlinear inverse problems with stability
results in Section \ref{sub:Consistency-through-Stability}.

\subsection{Summary of the Bayesian Approach to Inverse Problems on Hilbert Spaces\label{sub:ReviewBayesian}}

The key idea of Bayesian inverse problems is to model the input $\input\in X$
of a mathematical model, for example the initial condition of a PDE,
as random variable with distribution $\prior(d\input)$ based on a
priori knowledge. This distribution is called the prior which is updated
based on the observed data $y$. The resulting distribution $\mu^{\data}$
is called posterior and lies in the focus of the Bayesian approach. 

We assume that the data is modelled as 
\begin{equation}
y=\mathcal{G}(\input)+\noise\label{eq:IP}
\end{equation}
with $\mathcal{G}$ being the forward operator, a mapping between
the Hilbert spaces $X$ and $Y$, and with the observational noise
$\noise$. The aim of the inverse problem is the reconstruction of
$\input$ given the data $\data$. Because $\mathcal{G}$ might be
non-injective and $\xi$ is unknown, the problem is not exactly solvable
as stated.\textbf{ }If the distribution of the noise $\noise$ is
known, then $\input$ and $\data$ can be treated as jointly varying
random variables. Under mild assumptions on the prior, the distribution
of the noise and the forward operator, there exists a conditional
probability measure on $\input$, called the posterior $\mu^{y}$.
It is an update of the prior using the data and models the a posteriori
uncertainty. Therefore it can be viewed as the solution to the inverse
problem itself. In this way it is possible to obtain different explanations
of the data corresponding to different modes of the posterior.

In this article, we assume that the law of the observational noise
$\mu_{\noise}=\mathcal{N}(0,\obsCov)$ is a mean-zero Gaussian with
covariance $\obsCov$. In this case Bayes' rule can be generalised
for any $\mathcal{G}$ mapping into a finite dimensional space $Y$.
It follows that

\begin{eqnarray}
\hspace{-2cm}\frac{d\mu^{y}}{d\mu_{0}}(\input) & \propto\exp\left(-\frac{1}{2}\bigl\Vert\opObs(\input)-y\bigr\Vert_{\obsCov}^{2}\right)\propto\exp\left(-\frac{1}{2}\bigl\Vert\opObs(\input)\bigr\Vert_{\obsCov}^{2}+\left\langle y,\opObs(\input)\right\rangle _{\obsCov}-\bigl\Vert y\bigr\Vert_{\obsCov}^{2}\right)\label{eq:GeneralPosterior}\\
 & \propto\exp\left(-\frac{1}{2}\bigl\Vert\opObs(\input)\bigr\Vert_{\obsCov}^{2}+\left\langle y,\opObs(\input)\right\rangle _{\obsCov}\right).\nonumber 
\end{eqnarray}

By $\left\Vert \cdot\right\Vert _{\obsCov}$ we denote the norm of
the Cameron-Martin space $(H_{\noiseMeasure},\left\langle \cdot,\cdot\right\rangle _{\obsCov})$
of $\mu_{\noise}$ that is the closure of $Y$ with respect to $\left\langle \cdot,\cdot\right\rangle _{\obsCov}=\left\langle \obsCov^{-1}\cdot,\cdot\right\rangle $
(see \ref{sec:Notation} for more details). 
A proper derivation of Equation (\ref{eq:GeneralPosterior}),
including the fact that its last line is also valid for functional
data, and an appropriate introduction to Bayesian inverse problems
can be found in \cite{MR2652785} and \cite{stuartchinanotes}. All
in all the Bayesian approach can be summarised as %

\begin{equation}
\begin{array}{ll}
\mbox{Prior} & \input\sim\mu_{0} \\[10pt]
\mbox{Noise} & \noise\:\sim\mathcal{N}(0,\Gamma) \\[10pt]
\mbox{Posterior} & \frac{d\mu^{y}}{d\mu_{0}}\propto\exp\left(-\frac{1}{2}\bigl\Vert\opObs(\input)\bigr\Vert_{\obsCov}^{2}+\left\langle y,\opObs(\input)\right\rangle _{\obsCov}\right).
\end{array}\label{eq:bayesInv}
\end{equation}
As one can see in this example, the posterior can usually only be
expressed implicitly as an unnormalised density with respect to the
prior. Thus, in order to estimate the input parameters or perform
inference using the posterior, it has to be probed using either
\begin{itemize}
\item sampling methods, such as MCMC which aim at generating draws from
the posterior or 
\item variational methods for determining the location of an infinitesimal
ball with maximal posterior probability.
\end{itemize}
The second approach is also called the maximum a posteriori probability
(MAP) estimator. It can be viewed as an extension to many classical
methods for inverse problems. For example, it can be linked to the
$L^{2}$-Tikhonov regularisation by considering a Gaussian prior and
noise \cite{2013dashtiMap}. This relates the choice of norms in the
Tikhonov regularisation to the choice of the covariance of the prior
and the noise.

These regularisation techniques can be justified by convergence results.
Similarly, inference methods based on the posterior can be justified
by posterior consistency, a concept which we introduce in the next
section.

\subsection{Posterior Consistency for Bayesian Inverse Problems\label{sub:Posterior-Consistency}}

As for any approach to inverse problems, the Bayesian method can be
evaluated by considering an identical twin experiment. Therefore a
fixed input $\input^{\dagger}$, called the \textit{'truth'}, is considered
and data is generated using a sequence of forward models 
\[
y_{n}=\mathcal{G}_{n}(\truth)+\noise_{n}
\]
which might correspond to the increasing amount of data or diminishing
noise.   For each $n$  we denote the posterior corresponding to the prior $\prior$,  the noise distribution $\mu_{\xi_n}$ and the forward operator $\mathcal{G}_n$ by  $\mu^{y}_{n}$. Under appropriate assumptions,  the posterior $\mu_n^y $ is well-defined for $y=\mathcal{G}(\input)+\noise$  given by Bayes' rule in Equation (\ref{eq:bayesInv}) for $\prior$-a.e. $\input$ and $\mu_{\xi_n}$-a.e. $\noise_n$  (c.f. \cite{stuartchinanotes}).  This Bayes' rule does not give rise to a well-defined measure for arbitrary $y$. However, we will pose assumptions such that the normalising constant in the Bayes' rule  will be bounded above and below for every $\truth$ belonging to a particular set and $\mu_{\xi_n}$-a.e. $y=y_{n}=\mathcal{G}_{n}(\truth)+\noise_{n}$. We will denote these posteriors by $\mu^{y_n}$. This sequence of inverse problems is called posterior consistent if the posteriors $\posterior$ concentrate around the 'truth' $\truth$.
We quantify the concentration by the posterior probability assigned
to the ball $B_{\epsilon}^{d}$. Here $B_{\epsilon}^{d}$ denotes
a ball of radius $\epsilon$ with respect to a metric $d$. 

In the following we define this concept precisely and place it within
the literature before closing this section by relating posterior consistency
to small ball probabilities for the prior.

\begin{definition}
\textbf{\label{def:Posterior-consistency}}(Analogue to \cite{MR1790007})
A sequence of Bayesian inverse problems $(\prior,\mathcal{G}_{n},\mathcal{L}(\xi_{n}))$
is posterior consistent for $\truth$ with rate $\epsilon_{n}\downarrow0$
and with respect to a metric $d$ if for 
\[
y_{n}=\mathcal{G}_{n}(\truth)+\noise_{n},
\]
there exists a constant $M$ and a sequence $l_{n}\rightarrow1$ such
that 
\begin{equation}
\mathbb{P}_{\noise_{n}}\left(\mu^{y_{n}}\mbox{\ensuremath{\left(B_{M\epsilon_{n}}^{d}(\truth)\right)}}\geq l_{n}\right)\rightarrow1.\label{eq:posteriorConsistency}
\end{equation}
We simply say that $(\prior,\mathcal{G}_{n},\mathcal{L}(\xi_{n}))$
is posterior consistent if the above holds for any fixed constant
$\epsilon_{n}=\epsilon>0$. 
\end{definition}

Two important special cases of this definition are

\begin{itemize}
\item posterior consistency in the small noise limit: 
\[
\mathcal{L}\left(\xi_{n}\right)=\mathcal{L}\left(\frac{1}{\sqrt{n}}\xi\right)\mbox{ and }\mathcal{G}_{n}=\mathcal{G}
\]

\item posterior consistency in the large data limit: 
\[
\mathcal{L}\left(\xi_{n}\right)=\otimes_{i=1}^{n}\mathcal{L}\left(\xi\right)\mbox{ and \ensuremath{\mathcal{G}_{n}}=\ensuremath{\prod_{i=1}^{n}}}\mathcal{G}^{i}=(\mathcal{G}^{1},\dots,\mathcal{G}^{n}).
\]

\end{itemize}
In the above formulation $\mathcal{G}^{i}$ corresponds to different
measurements while $\mathcal{L}(\cdot)$ denotes the law of a random
variable.\textbf{}

There exists a variety of results for\textbf{ }posterior consistency
and inconsistency for statistical problems. Two important examples
are the identification of a distribution from (often i.i.d.) samples
or density estimation \cite{postConIncon,MR1790007,MR2418663,BayesianNonBook}.
The former is concerned with considering a prior on a set of probability
distributions and the resulting posterior based on $n$ samples of
one of these probability distributions. In \cite{doob1949application},
Doob proved that if a countable collection of samples almost surely
allows the identification of the generating distribution, then the
posterior is consistent for almost every probability distribution
with respect to the prior. This very general result is not completely
satisfactory because it does not provide a rate and the interest may
lie in showing posterior consistency for every possible truth in a
certain class. Moreover, some surprisingly simple examples of posterior
inconsistency have been provided for example by considering distributions
on $\mathbb{N}$ \cite{MR0158483}. The necessary bounds for posterior
consistency (c.f. Equation (\ref{eq:posteriorConsistency})) can be
obtained using the existence of appropriate statistical tests which
are due to bounds on entropy numbers. These methods are used in a
series of articles, for example in \cite{MR1790007,shen2001rates,MR2418663,vaartBook}.
This idea has also recently been applied to the Bayesian approach
to linear inverse problems in \cite{ray2012bayesian}.

In general, posterior consistency for infinite dimensional inverse
problems has mostly been studied for linear inverse problems in the
small noise limit where the prior is either a sieve prior, a Gaussian
or a wavelet expansion with uniform distributed coefficients \cite{knapik2011bayesian,2012arXiv1203.5753A,ray2012bayesian,florens2012regularized}.
Except for \cite{ray2012bayesian}, all these articles exploit the
explicit structure of the posterior in the conjugate Gaussian setting,
that means that we have a Gaussian prior as well as a Gaussian posterior. 

In contrast, we consider general priors, general forward operators
and Gaussian noise in this article. Usually, the posterior has a density
with respect to the prior as in Equation (\ref{eq:bayesInv}). However,
it is possible to provide examples where both the prior and posterior
are Gaussian but not absolutely continuous. This can be achieved using
for example Proposition 3.3 in \cite{severeillposedbay}. 

Subsequently, we assume that the posterior has a density with respect
to the prior implying that the posterior probability of a set is zero
whenever the prior probability of this set is zero. Therefore it is
necessary that $\truth$ is in the support of the prior giving rise
to the following definition.
\begin{definition}
\label{ass:nonDegenerateDef}The support of a measure $\mu$ in a
metric space $\left(X,d\right)$ is given by

\[
\mbox{supp}_{d}(\mu)=\left\{ x\Big\vert\mu\left(B_{\epsilon}^{d}(x)>0\:\forall\epsilon>0\right)\right\} .
\]
\end{definition}
It is natural to expect that the posterior consistency rate depends
on the behaviour of $\prior\left(B_{\epsilon}^{d}(a^{\dagger})\right)$
as $\epsilon\rightarrow0$. Asymptotics of this type are called small
ball probabilities. We recommend \cite{smallballsurv} as a good survey
and refer the reader to \cite{smallBallBib} for an up-to-date list
of references. In this article, we consider algebraic rates of posterior
consistency, that means we take $\epsilon_{n}=n^{-\kappa}$ in Definition
\ref{def:Posterior-consistency}. In order to establish these rates
of posterior consistency, we consider small ball asymptotics of the
following form

\[
\log(\prior(B_{\epsilon}^{d}(\truth))\succsim-\epsilon^{-\smallBallAsymp},
\]
where $\rho>0$ and with the notation as in Appendix \ref{sec:Notation}. 

Both posterior consistency and the contraction rate depend on properties
of the prior. This suggests that we should choose a prior with favourable
posterior consistency properties. From a dogmatic point of view the
prior is only supposed to be chosen to match the subjective a priori
knowledge. In practice priors are often picked based on their computational
performance whereas some of their parameters are adapted to represent
the subjective knowledge. An example for this is the choice of the
base measure and the intensity for a Dirichlet process \cite{BayesianNonBook}. 

Finally, we would like to conclude this Section by mentioning that
it has been shown in \cite{MR829555} that posterior consistency is
equivalent to the property that the posteriors corresponding to two
different priors merge. The yet unpublished book \cite{vaartBook}
contains a more detailed discussion about the justification of posterior
consistency studies for dogmatic Bayesians.

\subsection{An Elliptic Inverse Problem as an Application of our Theory\label{sub:IntroElliptic}}

The aim of this section is to set up the elliptic inverse problem
for which we will prove posterior consistency (c.f. Section \ref{sub:Posterior-Consistency})
both in the small noise and the large data limit. In a second step
we describe the available stability results and how they can be used
to reduce the problem of posterior consistency of a nonlinear inverse
problem to that of a linear regression problem. We end this section
by stating a special case of our posterior consistency results in
Section \ref{sec:Results-for-Elliptic}. 

Our results do not only apply to this particular elliptic inverse
problem but to any nonlinear inverse problem with appropriate stability
results (c.f. Section \ref{sub:Consistency-through-Stability}). However,
the results for the elliptic inverse problem are of particular interest
because it is used in oil reservoir simulations and the reconstruction
of the groundwater flow \cite{yeh1986review,MR628945,hansen2012inverse}. 

The forward model corresponding to our elliptic inverse problem is
based on the relation between $\pressure$ and $\input$ given by
the elliptic PDE in \ref{eq:postConEllipticForward}.

We would like to highlight that the relation between $\input$ and
$\pressure$ is nonlinear. Under the following assumptions, the solution
operator $\pressure(x;\input)$ to the above PDE is well-defined \cite{MR1814364}.

\begin{assumption}
(\label{ass:ellipticForward}Forward conditions) Suppose that
\begin{enumerate}
\item $D$ is compact, satisfies the exterior sphere condition (see \cite{MR1814364})
and has a smooth boundary;
\item $a\in C^{1}(D)\cap C(\bar{D})$ and f is smooth in $\bar{D}$;
\item $\input>\input_{\mbox{min}}>0$ and $f>f_{\mbox{min}}>0$ in
Equation (\ref{eq:postConEllipticForward}).
\end{enumerate}
\end{assumption}
Under these assumptions, the regularity results from \cite{MR1814364}
yield the following forward stability result.
\begin{prop}
\label{prop:forwardStability}If $a_{1}$ and $a_{2}$ satisfy Assumption
\ref{ass:ellipticForward} and are elements of $C^{\alpha}$ for $\alpha\ge1$,
then
\begin{equation}
\left\Vert p(\cdot;a_{1})-p(\cdot;a_{2})\right\Vert _{C^{\alpha+1}}\leq M\left\Vert a_{1}-a_{2}\right\Vert _{C^{\alpha}}.\label{eq:forwardCont}
\end{equation}

\end{prop}
The inverse problem is concerned with the reconstruction of $a$ given
the data 
\[
y_{n}=\mathcal{G}_{n}(a)+\xi,
\]
which is related to $p$ in the following way. 
\begin{assumption}
\label{ass:forwardSplitting}The forward operator $\mathcal{G}$ can
be split into a composition of the solution operator $\pressure$
and an observation operator $\mathcal{O}$, that is 
\begin{equation}
\mathcal{G}_{n}(\input)=\mathcal{O}_{n}\left(p(\cdot;\input)\right).\label{eq:assForwardSplitting}
\end{equation}

\end{assumption}
The Bayesian approach to the Elliptic Inverse Problem (\EIP) summarises as
\[
\boxed{\begin{array}{cc}
\mbox{Model} & -\nabla\cdot(\input\nabla\pressure(\cdot;a))=f(x)\;\mbox{in }D,\quad\pressure=0\mbox{ on }\partial D\\[7pt]
\mbox{Prior } & \prior\mbox{ on }\input \\[7pt]
\mbox{Data} & \data=\opObs_{n}(\input)+\noise_{n}=\mathcal{O}_{n}(\pressure(\cdot,\input))+\noise_{n},\,\noise_{n}\sim\mathcal{N}(0,\Gamma_{n}) \\[7pt]
\mbox{Posterior} & \frac{d\mu^{n}}{d\prior}(\input)\propto\exp\left(-\frac{1}{2}\bigl\Vert\opObs_{n}(\input)\bigr\Vert_{\obsCov_{n}}^{2}+\left\langle y,\opObs(\input)\right\rangle _{\obsCov_{n}}\right).
\end{array}}\quad\left(\mbox{EIP}\right)
\]
A rigorous Bayesian formulation of this inverse problem, with log-Gaussian
priors and Besov priors has been given in \cite{UncertaintyElliptic}
and \cite{Con3} respectively. In \cite{Con4} the problem is considered
with a prior based on a series expansion with uniformly distributed
coefficients (see Section \ref{sub:Posterior-Consistency-Rate}).
In the same article, a generalised Polynomial Chaos (gPC) method is
derived in order to approximate posterior expectations.

We consider posterior consistency as set up in Definition \ref{def:Posterior-consistency}
in the following cases: 
\begin{itemize}
\item the small noise limit with $\mathcal{O}_{n}=\mbox{Id}$ corresponding
to a functional observation and an additive Gaussian random field
as noise such that 
\[
y_{n}=\pressure(\cdot;u)+\frac{1}{\sqrt{n}}\noise;
\]

\item the large data limit with $\mathcal{O}_{n}=\left(e_{x_{i}}\right)_{i=1}^{n}$
where $e_{x_{i}}$ are evaluations at $x_{i}\in D$. In this case
the data takes the form
\[
y_{n}=\left\{ \pressure(x_{i};\input)\right\} _{i=1}^{n}+\xi_{n}.
\]

\end{itemize}

Posterior consistency in both cases are based on a stability result
which can be derived by taking $\input$ as the unknown in Equation
(\ref{eq:postConEllipticForward}). This leads to the following hyperbolic
PDE
\begin{equation}
-\nabla a\cdot\nabla p-a\Delta p=f.\label{eq:hyperbolicPDE}
\end{equation}

Imposing Assumption \ref{ass:ellipticForward}, it has been established
that there exists a unique solution $\input$ to this PDE without
any additional boundary conditions:
\begin{proposition}[Corollary 2 on page 220 in \cite{MR628945}] \label{prop:Richter}Suppose
$\pressure$ arises as a solution to Equation (\ref{eq:postConEllipticForward})
with $\input$ as diffusion coefficient satisfying Assumption \ref{ass:ellipticForward}.
Then Equation (\ref{eq:hyperbolicPDE}) is uniquely solvable for any
$f\in L^{\infty}(D)$ and $\input$ such that
\[
\bigl\Vert\input\bigr\Vert_{\infty}\leq D\left(\input_{\text{min}},f_{\text{min}},\bigl\Vert\nabla\input\bigr\Vert_{\infty}\right)\bigl\Vert\pressure\bigr\Vert_{\infty}.
\]
Moreover, if $a_{1}$ and $a_{2}$ satisfy these assumptions, then

\textup{
\[
\left\Vert \input_{1}-\input_{2}\right\Vert _{\infty}\leq M\left\Vert \input_{1}\right\Vert _{C^{1}}\cdot\left\Vert \pressure(\cdot,\input_{1})-\pressure(\cdot,\input_{2})\right\Vert _{C^{2}}.
\]
}
\end{proposition}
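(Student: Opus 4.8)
The plan is to treat Equation (\ref{eq:hyperbolicPDE}) as a linear first-order transport equation for the unknown $a$, with transport field $\nabla p$, and to solve it by the method of characteristics. Rewriting the equation as $\nabla p\cdot\nabla a = -(\Delta p)\,a - f$, the characteristic curves are the integral curves of $\dot x(t)=\nabla p(x(t))$, i.e.\ the gradient-ascent lines of the pressure. Along such a curve the PDE collapses to the linear ODE $\frac{\d}{\d t}a(x(t)) = -(\Delta p)\,a - f$, which I would integrate by variation of constants. The decisive observation is that the maximum principle applied to (\ref{eq:postConEllipticForward}) (with $f>f_{\min}>0$, $a>a_{\min}>0$ and $p=0$ on $\partial D$) forces $p>0$ in the interior, so $p$ attains an interior maximum $x^\ast$; there $\nabla p(x^\ast)=0$ and the equation degenerates to the \emph{algebraic} relation $a(x^\ast)\,\Delta p(x^\ast)=-f(x^\ast)$. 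This pins down $a(x^\ast)=f(x^\ast)/(-\Delta p(x^\ast))$, with $-\Delta p(x^\ast)=f(x^\ast)/a(x^\ast)\ge f_{\min}/\|a\|_\infty>0$, so that no boundary data for $a$ is needed: the value at the critical point plays the role of the missing initial condition, and propagating the ODE backwards along characteristics determines $a$ uniquely everywhere and yields the a priori bound $\|a\|_\infty\le D(a_{\min},f_{\min},\|\nabla a\|_\infty)\,\|p\|_\infty$ through a Gr\"onwall estimate.

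For the stability estimate I would set $w:=a_1-a_2$ and subtract the two forward equations $-\nabla\cdot(a_i\nabla p_i)=f$. Writing $a_1\nabla p_1-a_2\nabla p_2 = a_1\nabla(p_1-p_2)+w\,\nabla p_2$ gives, after taking divergences, the transport equation $\nabla p_2\cdot\nabla w + (\Delta p_2)\,w = g$ with $g:=-\nabla\cdot\bigl(a_1\nabla(p_1-p_2)\bigr)$. The right-hand side is controlled by exactly the norms appearing in the claim, namely $\|g\|_\infty\le \|\nabla a_1\|_\infty\|\nabla(p_1-p_2)\|_\infty+\|a_1\|_\infty\|\Delta(p_1-p_2)\|_\infty \le C\,\|a_1\|_{C^1}\|p_1-p_2\|_{C^2}$. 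Thus $w$ satisfies the same type of transport equation as $a$ but now with a small forcing term, and the same characteristic-and-pinning argument applies: at an interior maximum $x^\ast$ of $p_2$ one has $w(x^\ast)=g(x^\ast)/\Delta p_2(x^\ast)$, whence $|w(x^\ast)|\le \|a_2\|_\infty f_{\min}^{-1}\|g\|_\infty$, and Gr\"onwall along the characteristics of $\nabla p_2$ propagates this bound to all of $D$, giving $\|a_1-a_2\|_\infty\le M\|a_1\|_{C^1}\|p_1-p_2\|_{C^2}$.

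The main obstacle, in both parts, is the global geometry of the characteristic flow rather than the pointwise ODE analysis. One must verify that (almost) every point of $D$ lies on an integral curve of $\nabla p$ that reaches a critical point at which the solution is pinned, handle the structure of the critical set (interior maxima versus saddles, whose stable manifolds should form a negligible set for the $L^\infty$ conclusion), and, most delicately, bound the traversal time $T$ and the accumulated Gr\"onwall factor $\e^{\|\Delta p\|_\infty T}$ uniformly, despite the transport field $\nabla p$ degenerating to zero precisely where the characteristics originate and terminate. It is in controlling these quantities in terms of $a_{\min}$, $f_{\min}$, $\|\nabla a\|_\infty$ and $\|p\|_\infty$ that the explicit constants $D$ and $M$ are produced; this is exactly the careful analysis carried out in \cite{MR628945}, which I would invoke for the technical estimates while the characteristic reduction above supplies the conceptual backbone.
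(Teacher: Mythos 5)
This proposition is not proved in the paper at all: it is imported verbatim from the literature, exactly as its attribution says (Corollary 2 on page 220 of \cite{MR628945}), and is used downstream as a black-box stability result. There is therefore no internal proof to measure your argument against; the paper's only ``approach'' is citation, and since you too close your argument by invoking \cite{MR628945} for the hard estimates, your proposal is in effect the paper's treatment plus a conceptual reconstruction of why Richter's result holds. That reconstruction is sound and faithful to the mechanism of the cited proof: rewriting the hyperbolic equation as a transport equation for $a$ along $\nabla p$, noting that $p>0$ in $D$ with $p=0$ on $\partial D$ forces interior critical points, and observing that there the equation degenerates to the algebraic relation $a\,\Delta p=-f$ with $\Delta p<0$ precisely because $f\ge f_{\min}>0$ --- this is where the positivity assumptions enter and why no boundary data for $a$ is needed; Richter's argument is likewise a maximum principle for this degenerate first-order equation, anchored at the critical points of $p$. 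Your reduction of the stability estimate to the same structure, via $w=a_1-a_2$ and the forcing $g=-\nabla\cdot\bigl(a_1\nabla(p_1-p_2)\bigr)$ with $\|g\|_\infty\lesssim\|a_1\|_{C^1}\|p_1-p_2\|_{C^2}$, is also correct and explains why exactly these norms appear in the conclusion. Two cautions, both of which you already flag: first, as a standalone proof the sketch has genuine gaps (global structure of the characteristic flow, the critical set, and well-posedness of integrating ``backwards from'' a degenerate endpoint), and these gaps are not routine --- in particular a naive Gr\"onwall factor $\e^{\|\Delta p\|_\infty T}$ is useless, since $\nabla p$ vanishes exactly where characteristics terminate and the traversal time $T$ is infinite there, so the pinning must be exploited more carefully than ``initial condition plus Gr\"onwall''; second, since the paper relies on this result as stated, any reproof should reproduce the precise constant structure $D(a_{\min},f_{\min},\|\nabla a\|_\infty)$ and $M\|a_1\|_{C^1}$, which only the deferred analysis in \cite{MR628945} delivers.
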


The stability result above and a change of variables (Theorem \ref{thm:IndpendentOfDiscription})
implies 

\[
\mu^{y_{n}}\bigl(B_{\epsilon}^{L^{\infty}}(\truth)\bigr)=\tilde{\mu}^{y_{n}}\bigl(p(B_{\epsilon}^{L^{\infty}}(\truth)\bigr)\ge\tilde{\mu}^{y_{n}}\left(B_{\frac{\epsilon}{M}}^{C^{2}}(\pressure^{\dagger})\right).
\]
This statement reduces posterior consistency of the \EIP\ in $L^{\infty}$
to posterior consistency of the following Bayesian Regression Problem
(\BRP) in $C^{2}$  
\[
\boxed{\begin{array}{cc}
\text{Prior } & \tilde{\mu}_{0}=\pressure_{\star}\prior\text{ on }\pressure \\[7pt]
\text{Data} & \data=\mathcal{O}_{n}(\pressure)+\noise_{n},\,\noise_{n}\sim\mathcal{N}\left(0,\Gamma_{n}\right) \\[7pt]
\text{Posterior} & \frac{d\tilde{\mu}^{y_{n}}}{d\tilde{\prior}}(\pressure)\propto\exp\left(-\frac{1}{2}\bigl\Vert\mathcal{O}(\pressure)\bigr\Vert_{\obsCov_{n}}^{2}+\left\langle y,\mathcal{O}(\pressure)\right\rangle _{\obsCov_{n}}\right) \\[7pt]
 & \text{with }\mathcal{O}_{n}=\text{Id}\text{ or }\mathcal{O}_{n}=\left(e_{x_{i}}\right)_{i=1}^{n}
\end{array}}\quad\left(\text{BRP}\right)
\]
where $p$ is now treated as an variable, that is  the prior and
the posterior are now formulated on the pressure space. Moreover,
$p_{\star}\prior$ denotes the push forward of the prior under $p$.
Note that for $\mathcal{O}_{n}=\text{Id}$ the \BRP\ can also be
viewed as the simplest linear inverse problem.

The required posterior consistency results for the \BRP\ can be derived
from those in Section \ref{sec:General-Results} using interpolation
inequalities. In this way we obtain posterior consistency results
in Section \ref{sec:Results-for-Elliptic} a special case of which
is the following theorem:

\begin{thm}[\ref{thm:ellipticNoRate}]
Suppose that the prior $\prior$ satisfies 
\begin{eqnarray*}
a(x)\ge\lambda>0\quad\forall x\in D\text{ and }\bigl\Vert a\bigr\Vert_{C^{\alpha}}\leq\Lambda\qquad\mbox{ for }\prior\mbox{-a.e. } a  \mbox{ and for } \alpha >1
\end{eqnarray*}
Let the noise be given by $\xi\sim\mathcal{N}\left(\mbox{0,(-\ensuremath{\Delta_{\text{Dirichlet}})^{-r}}}\right)$.
If $\alpha>r+\frac{d}{2}-2$ and $\alpha>r-1$, then (\EIP) is posterior
consistent for any $\truth\in\text{supp}_{C^{\alpha}}\prior$ in the
small noise limit with respect to the $C^{\tilde{\alpha}}$-norm for
any $\tilde{\alpha}<\alpha$.
\end{thm}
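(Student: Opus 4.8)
The plan is to turn the informal reduction sketched before the theorem into a rigorous argument: transfer posterior consistency of (\EIP) to posterior consistency of the regression problem (\BRP) through the deterministic stability estimate, establish the latter via the abstract results of Section \ref{sec:General-Results}, and finally upgrade the mode of convergence from $L^\infty$ to $C^{\tilde\alpha}$ by interpolation. First I would record that the prior hypotheses $\input\ge\lambda$ and $\|\input\|_{C^\alpha}\le\Lambda$ with $\alpha>1$ force Assumption \ref{ass:ellipticForward} to hold for $\prior$-a.e.\ $\input$, so that the forward estimate of Proposition \ref{prop:forwardStability}, the inverse estimate of Proposition \ref{prop:Richter}, and the change-of-variables result \ref{thm:IndpendentOfDiscription} (applied to the a.s.\ injective map $\input\mapsto\pressure(\cdot;\input)$) are all available almost surely.

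Combining the identity $\posterior\bigl(B_\epsilon^{L^\infty}(\truth)\bigr)=\tilde{\mu}^{y_n}\bigl(\pressure(B_\epsilon^{L^\infty}(\truth))\bigr)$ with Proposition \ref{prop:Richter} — which shows that $\|\pressure(\cdot;\input)-\pressure^\dagger\|_{C^2}\le \epsilon/(M\Lambda)$ implies $\input\in B_\epsilon^{L^\infty}(\truth)$, using $\|\input\|_{C^1}\le\|\input\|_{C^\alpha}\le\Lambda$ — yields the key inequality
\[
\posterior\bigl(B_\epsilon^{L^\infty}(\truth)\bigr)\;\ge\;\tilde{\mu}^{y_n}\bigl(B_{\epsilon/(M\Lambda)}^{C^2}(\pressure^\dagger)\bigr).
\]
Thus it suffices to prove that the (\BRP) posterior $\tilde{\mu}^{y_n}$ concentrates on $C^2$-balls about $\pressure^\dagger$. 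For this I would feed the push-forward prior $\tilde{\mu}_0=\pressure_\star\prior$ into the general consistency theorem of Section \ref{sec:General-Results}, verifying its three inputs. By Proposition \ref{prop:forwardStability} the measure $\tilde{\mu}_0$ is supported on a bounded subset of $C^{\alpha+1}$ and $\pressure^\dagger\in C^{\alpha+1}$, supplying the regularity hypothesis and the (trivial, bounded-support) tail hypothesis; positivity of the small-ball mass follows from $\truth\in\mathrm{supp}_{C^\alpha}\prior$ together with the Lipschitz inclusion $\pressure(B_\delta^{C^\alpha}(\truth))\subseteq B_{M\delta}^{C^{\alpha+1}}(\pressure^\dagger)\subseteq B_{M\delta}^{C^{2}}(\pressure^\dagger)$.

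The two arithmetic conditions enter precisely here. The bound $\alpha>r-1$ guarantees $\pressure\in H^{\alpha+1}\subset H^r$, i.e.\ that the observed signal lies in the Cameron–Martin space $H_{\noiseMeasure}=H^r$ of the noise $\mathcal N(0,(-\Delta_{\mathrm{Dirichlet}})^{-r})$, so that the Bayes formula (\ref{eq:bayesInv}) — in particular the stochastic-integral term $\langle\data,\mathcal O(\pressure)\rangle_{\obsCov_n}$ — defines a genuine measure with normalising constant bounded above and below; this is what makes the posteriors $\tilde{\mu}^{y_n}$ (hence $\posterior$) well defined. The bound $\alpha>r+\frac{d}{2}-2$ is the smoothness margin that lets the consistency produced by Section \ref{sec:General-Results} in the noise-adapted Hilbert norm be promoted, by interpolation against the uniform $C^{\alpha+1}$ bound on $\mathrm{supp}(\tilde{\mu}_0)$ and the Sobolev embedding into $C^2$, to consistency in the strong $C^2$ norm demanded by the displayed inequality. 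It then remains to interpolate on the coefficient side: since $\posterior\ll\prior$ the posterior is supported on $\{\|\input\|_{C^\alpha}\le\Lambda\}$, and $\|\input-\truth\|_{C^{\tilde\alpha}}\le C\|\input-\truth\|_{L^\infty}^{1-\tilde\alpha/\alpha}\|\input-\truth\|_{C^\alpha}^{\tilde\alpha/\alpha}$ shows that for every fixed $\epsilon>0$ there is $\delta>0$ with $B_\delta^{L^\infty}(\truth)\cap\{\|\input\|_{C^\alpha}\le\Lambda\}\subseteq B_\epsilon^{C^{\tilde\alpha}}(\truth)$; hence $L^\infty$-consistency upgrades to $C^{\tilde\alpha}$-consistency for every $\tilde\alpha<\alpha$, in the sense of Definition \ref{def:Posterior-consistency}.

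I expect the main obstacle to be the regression step: verifying the hypotheses of the Section \ref{sec:General-Results} theorem for the only implicitly described push-forward prior $\tilde{\mu}_0$, and then carrying its conclusion all the way to the non-Hilbertian $C^2$ norm. The delicate points are the quantitative small-ball / Kullback–Leibler support lower bound at $\pressure^\dagger$, which must survive the push-forward and be compatible with the covariance $(-\Delta)^{-r}$, and the interpolation–embedding argument whose feasibility is exactly encoded by $\alpha>r+\frac{d}{2}-2$. The remaining technical nuisance is controlling the stochastic-integral term $\langle\noise_n,\mathcal O(\pressure)\rangle_{\obsCov_n}$ uniformly over the complement of the $C^2$-ball, so that it stays subdominant to the quadratic penalty $-\frac12\|\mathcal O(\pressure)\|_{\obsCov_n}^2$ with $\mathbb P_{\noise_n}$-probability tending to one.
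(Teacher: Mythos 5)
Your overall strategy coincides with the paper's: reduce (\EIP) to (\BRP) via Proposition \ref{prop:Richter} and the change of variables (Theorem \ref{thm:IndpendentOfDiscription}), apply Theorem \ref{thm:NoTail} to the push-forward prior $\tilde{\mu}_0=\pressure_\star\prior$, promote the resulting Cameron--Martin ($H^r$) consistency to $C^2$ by Besov embedding and interpolation, and finally interpolate on the coefficient side to pass from $L^\infty$ to $C^{\tilde\alpha}$. However, there is a genuine gap in your verification of the hypotheses of Theorem \ref{thm:NoTail}, and it sits exactly where the constant $r+\frac{d}{2}-2$ comes from. That theorem requires an almost sure bound $\bigl\Vert\pressure\bigr\Vert_{\mathcal{H}^s}\le U$ for some $s>1+\sigma_0=1+\frac{d}{2r}$; this margin is what tames the stochastic term $\left\langle \pressure-\pressure^\dagger,\noise\right\rangle_1$ inside that theorem, and it is \emph{not} needed for the interpolation/embedding promotion step, which only requires $\alpha>0$ together with the a.s.\ H\"older bound. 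You propose to obtain the regularity of $\tilde{\mu}_0$ from the forward stability estimate (Proposition \ref{prop:forwardStability}), which only gives an a.s.\ bound in $C^{\alpha+1}$, i.e.\ $s=\frac{\alpha+1}{r}$; then $s>1+\frac{d}{2r}$ forces $\alpha>r+\frac{d}{2}-1$, one derivative stronger than the stated hypothesis $\alpha>r+\frac{d}{2}-2$. The paper instead invokes Schauder regularity (Theorem 6.19 of \cite{MR1814364}) to get the a.s.\ bound $\bigl\Vert\pressure\bigr\Vert_{C^{\alpha+2}}\le K$, hence $s=\frac{\alpha+2}{r}$, for which $s>1+\sigma_0$ is precisely $\alpha>r+\frac{d}{2}-2$. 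As written, your argument does not close under the stated assumptions: you need the elliptic regularity estimate, not just the stability/Lipschitz estimate, to gain the extra derivative, and you never actually check the inequality $s>1+\sigma_0$ that consumes the hypothesis.

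A second, related inaccuracy: the condition $\alpha>r-1$ is not primarily about the normalising constant. In the paper it guarantees that the support hypothesis of Theorem \ref{thm:NoTail} transfers, i.e.\ $\pressure^\dagger\in\text{supp}_{H^r}\tilde{\mu}_0$, through the embedding $C^{\alpha+1}\hookrightarrow H^{r}$ (valid because $\alpha+1>r$ on a bounded domain). Your inclusion chain for the small-ball mass ends in $C^2$-balls, $\pressure\bigl(B_\delta^{C^\alpha}(\truth)\bigr)\subseteq B_{M\delta}^{C^{2}}(\pressure^\dagger)$, but Theorem \ref{thm:NoTail} needs positive mass of $H^r$-balls, and $C^2$-balls are not contained in $H^r$-balls once $r>2$; the inclusion must be routed through $C^{\alpha+1}\hookrightarrow H^r$, which is where $\alpha>r-1$ is genuinely consumed. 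With these two repairs --- Schauder regularity for the a.s.\ $\mathcal{H}^s$ bound with $s>1+\sigma_0$, and the $H^r$-support argument --- your outline becomes the paper's proof.
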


This approach is not limited to the \EIP\  as the following section
shows.

\subsection{Posterior Consistency through Stability Results\label{sub:Consistency-through-Stability}}

In Section \ref{sub:IntroElliptic}, we present our main idea, that
is the reduction of the problem of posterior consistency of the \EIP\ 
to that of the \BRP. The main ingredients of this reduction are the
stability result that was summarised in Proposition \ref{prop:Richter}
and the posterior consistency results for the \BRP. This approach
is not limited to the \EIP\  but it is applicable to any inverse
problem for which appropriate stability results are available. This
is the case for many inverse problems such as the inverse scattering
problem in \cite{kuchment2012radon} or the Calderon problem in \cite{alessandrini1988stable}. We would
like to point out that these stability results are also crucial for
proving the convergence of regularisation methods (see Theorem 10.4
in \cite{inverseProblem}).

\begin{thm}
Suppose $\mathcal{G}_{n}=\mathcal{O}_{n}\circ G$ with $G:(X,\Vert\cdot\Vert_{X})\rightarrow(Y,\Vert\cdot\Vert_{Y})$
and $\mathcal{O}_{n}:(Y,\Vert\cdot\Vert_{Y})\rightarrow(Z,\Vert\cdot\Vert_{Z})$.
Moreover, we assume that 
\begin{itemize}
\item there exists a stability result of the form 
\begin{eqnarray*}
\Vert a_{1}-a_{2}\Vert_{X} & \leq b(\Vert G(a_{1})-G(a_{2})\Vert_{Y})\\
 & \mbox{where }b:\mathbb{R}^{+}\rightarrow\mathbb{R}^{+}\mbox{is increasing and},\, b(0)=0;
\end{eqnarray*}

\item the sequence of Bayesian inverse problems $(G_{\star}\prior,\mathcal{O}_{n},\mathcal{L}(\xi_{n}))$
is posterior consistent with respect to $\Vert\cdot\Vert_{Y}$ for
all $p^{\dagger}\in A$ with rate $\epsilon_{n}$. 
\end{itemize}
Then $(\prior,\mathcal{G}_{n},\mathcal{L}(\xi_{n}))$ is posterior
consistent with respect to $\Vert\cdot\Vert_{X}$ for all $a^{\dagger}\in G^{-1}(A)$
with rate $b(\epsilon_{n}).$ \end{thm}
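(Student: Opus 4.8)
The plan is to transfer the posterior consistency for the regression problem $(G_\star\prior, \mathcal{O}_n, \mathcal{L}(\xi_n))$ directly to the original inverse problem using the stability estimate together with a change-of-variables identity for the push-forward measure. The key observation is that the posterior $\mu^{y_n}$ on $X$ and the regression posterior $\tilde\mu^{y_n} = (G_\star\mu^{y_n})$ on $Y$ are linked by $G$ in exactly the way illustrated for the \EIP: for any measurable set $B \subseteq X$ we have $\mu^{y_n}(B) = \tilde\mu^{y_n}(G(B))$, since both posteriors share the likelihood $\exp(-\tfrac12\Vert\mathcal{O}_n(\cdot)\Vert_{\obsCov_n}^2 + \langle y, \mathcal{O}_n(\cdot)\rangle_{\obsCov_n})$ expressed through $G$, and the regression prior is precisely $G_\star\prior$.

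First I would fix $\truth \in G^{-1}(A)$ and set $p^\dagger = G(\truth) \in A$. The stability result gives, for any $\input$ satisfying the relevant assumptions,
\[
\Vert \input - \truth\Vert_X \leq b\left(\Vert G(\input) - G(\truth)\Vert_Y\right).
\]
Since $b$ is increasing with $b(0)=0$, this means that $\Vert G(\input) - p^\dagger\Vert_Y < \epsilon_n$ implies $\Vert \input - \truth\Vert_X < b(\epsilon_n)$. In set-theoretic terms, $G^{-1}\bigl(B_{\epsilon_n}^{\Vert\cdot\Vert_Y}(p^\dagger)\bigr) \subseteq B_{b(\epsilon_n)}^{\Vert\cdot\Vert_X}(\truth)$. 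The second step is to combine this inclusion with the change-of-variables identity. For the constant $M$ and sequence $l_n \to 1$ provided by posterior consistency of the regression problem, I would estimate
\[
\mu^{y_n}\bigl(B_{M b(\epsilon_n)}^{\Vert\cdot\Vert_X}(\truth)\bigr) \geq \mu^{y_n}\bigl(G^{-1}(B_{M\epsilon_n}^{\Vert\cdot\Vert_Y}(p^\dagger))\bigr) = \tilde\mu^{y_n}\bigl(B_{M\epsilon_n}^{\Vert\cdot\Vert_Y}(p^\dagger)\bigr),
\]
where the inequality uses the stability inclusion (with $M\epsilon_n$ in place of $\epsilon_n$, so that $b(M\epsilon_n) \leq M b(\epsilon_n)$ need only hold up to adjusting $M$; for subadditive or concave $b$ this is automatic, otherwise one absorbs the constant into the rate $b(\epsilon_n)$). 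The regression consistency then yields $\mathbb{P}_{\xi_n}\bigl(\tilde\mu^{y_n}(B_{M\epsilon_n}^{\Vert\cdot\Vert_Y}(p^\dagger)) \geq l_n\bigr) \to 1$, and the chain of inequalities transports this lower bound to $\mu^{y_n}$, establishing Equation (\ref{eq:posteriorConsistency}) with rate $b(\epsilon_n)$ for the $X$-norm metric.

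The main obstacle I anticipate is not the measure-theoretic transfer, which is essentially formal, but the careful handling of the rate under the nonlinear map $b$. Unless $b$ satisfies a growth condition such as $b(M\epsilon_n) \leq C_M\, b(\epsilon_n)$ (which holds for the power-type $b$ arising from Hölder stability estimates like Proposition \ref{prop:Richter}), the constant $M$ from the regression problem does not cleanly pull through $b$, and one must verify that the radius $b(M\epsilon_n)$ can still be written as $M'\cdot b(\epsilon_n)$ for some fixed $M'$ in order to match the precise form of Definition \ref{def:Posterior-consistency}. A secondary technical point is ensuring that the change-of-variables identity $\mu^{y_n}(G^{-1}(\cdot)) = \tilde\mu^{y_n}(\cdot)$ is valid on the relevant $\sigma$-algebra and that the stability inclusion only needs to hold on the $\prior$-full-measure set where the assumptions on $\input$ are met, so that the dropped region contributes nothing to the posterior mass.
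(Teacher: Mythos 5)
Your proposal is correct and follows essentially the same route as the paper: the paper's proof is exactly the combination of the change-of-variables identity $G_{\star}\mu^{y_{n}}=\tilde{\mu}^{y_{n}}$ (Theorem \ref{thm:IndpendentOfDiscription}) with the stability inclusion $G^{-1}\bigl(B_{\epsilon_{n}}^{Y}(G(\truth))\bigr)\subseteq B_{b(\epsilon_{n})}^{X}(\truth)$, which yields $\mu^{y_{n}}\bigl(B_{b(\epsilon_{n})}^{X}(\truth)\bigr)\ge\tilde{\mu}^{y_{n}}\bigl(B_{\epsilon_{n}}^{Y}(G(\truth))\bigr)$ and hence transfers the consistency statement. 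The technicality you flag about pulling the constant $M$ through the nonlinear map $b$ (i.e.\ whether $b(M\epsilon_{n})\le M'b(\epsilon_{n})$) is genuine but is passed over silently in the paper's one-line proof, so your treatment is, if anything, slightly more careful.
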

\begin{proof}
Using the notation of Section \ref{sub:IntroElliptic}, we denote
the posteriors for the Bayesian inverse problems $(\prior,\mathcal{G}_{n},\mathcal{L}(\xi_{n}))$
and $(G_{\star}\prior,\mathcal{O}_{n},\mathcal{L}(\xi_{n}))$ by $\posterior$
and $\tilde{\mu}^{y}$, respectively. Then a change of variables (c.f.
Theorem \ref{thm:IndpendentOfDiscription})
implies
\[
\mu^{y}\bigl(B_{b(\epsilon_{n})}^{X}(\truth)\bigr)\ge\tilde{\mu}^{y}\bigl(B_{\epsilon_{n}}^{Y}(G(\truth))\bigr).
\]
\end{proof}

\section{Posterior Consistency for Bayesian Regression\label{sec:General-Results}}

As described in the previous section, for many inverse problems posterior
consistency can be reduced to posterior consistency of a \BRP\ (c.f.
Section \ref{sub:Consistency-through-Stability}) using stability
results. Thus, with the results obtained in this section we may conclude
posterior consistency for apparently harder nonlinear inverse problems.
For the \EIP\ this is achieved by an application of the results in
Theorem \ref{thm:NoTail} and \ref{thm:largeData}. Because the derivation
of these two results is quite technical, we first give a summary and
we recommend the reader to become familiar with both theorems but
to skip the technical details on the first read. 

It is classical to model the response as
\[
\data_{n}=\mathcal{O}_{n}(\pressure)+\noise_{n}.
\]
In the following we consider two Bayesian regression models with
\begin{itemize}
\item $\mathcal{O}_{n}=\text{Id}$ and the noise is a Gaussian random field
that is scaled to zero like $\noise_{n}=n^{-\frac{1}{2}}\noise$ or 
\item $\mathcal{O}_{n}=\left(e_{x_{i}}\right)_{i=1}^{n}$ and $\mathcal{L}(\noise_{n})=\otimes_{i=1}^{n}\mathcal{N}\left(0,\sigma^{2}\right)$
corresponding to evaluations of a function with additive i.i.d. Gaussian
noise.
\end{itemize}
These models represent the large data and the small noise limit, respectively.\textbf{
}

We prove posterior consistency for both problems under weak assumptions
on the prior. This is necessary because the \BRP s resulting from
nonlinear inverse problems are usually only given in an implicit form.
For both cases we are able to obtain a rate assuming appropriate asymptotic
lower bounds on the small ball probabilities of the prior around $\truth$
(see Section \ref{sub:Posterior-Consistency}). Moreover, posterior
consistency with respect to stronger norms can be obtained using prior
or posterior regularity in combination with interpolation inequalities
which is the subject of Section \ref{sub:Convergence-in-Stronger}. 

For the large data limit, that is $\mathcal{O}_{n}=\left(e_{x_{i}}\right)_{i=1}^{n}$,
we obtain posterior consistency with respect to the $L^{\infty}$-norm
in Section \ref{sub:Point-wise-Observations-in}. We assume an almost
sure upper bound on a H�lder norm for the prior and an additional
condition on the locations of the observations. The latter is justified
by construction of a counterexample.

For the small noise limit, that is $\mathcal{O}_{n}=\text{Id}$, we
prove posterior consistency with respect to the Cameron-Martin norm
of the noise in Section \ref{sub:The-Small-Noise}. This norm corresponds
to the $\left\Vert \cdot\right\Vert _{1}$-norm in the Hilbert scale
with respect to the covariance operator $\Gamma$. Both the Cameron-Martin
norm and Hilbert scales are introduced in  \ref{sec:Notation}.
If an appropriate $\left\Vert \cdot\right\Vert _{s}$-norm is $\prior$-a.s.
bounded, we obtain an explicit rate of posterior consistency. Otherwise,
the rate is implicitly given as a low-dimensional optimisation problem.
However, the condition for mere posterior consistency takes a simple
form.

\begin{cor}
(See Corollary \ref{cor:postConSmallNoRate} for the case of general
noise)\\
Suppose that the noise is given by $\xi\sim\mathcal{N}(0,(-\Delta)^{-r})$
and $\prior\left(\exp(f\bigl\Vert p\bigr\Vert_{H^{s}}^{e})\right)<\infty$
for $s>r+\mbox{\ensuremath{\frac{d}{2}}}$ and $f>0$. Then the posterior
is consistent in $H^{r}$ for any $\truth\in\text{supp}_{\mathcal{H}^{r}}$
if $e$ and $\lambda=\frac{s-r-\frac{d}{2}}{s-r}$ satisfy the following
conditions {\small{
\[
\begin{array}{cc}
e>-1+\sqrt{8-8\lambda} & \text{if }\lambda\in\left[0,\frac{1}{2}\right] \\[7pt]
e>2-2\lambda & \text{if }\lambda\in\frac{1}{2},1.
\end{array}
\]
}}
{\small \par}\end{cor}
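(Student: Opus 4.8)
The plan is to obtain this statement as the specialisation of the general small-noise consistency result of Section~\ref{sub:The-Small-Noise} (Theorem~\ref{thm:NoTail} and its general form Corollary~\ref{cor:postConSmallNoRate}) to the covariance $\Gamma=(-\Delta)^{-r}$, so the only genuinely new work here is the translation of the abstract parameters into the stated inequalities. The first step is the identification of the relevant norms. Since $\Gamma^{-1/2}=(-\Delta)^{r/2}$, the Cameron-Martin norm of $\mathcal N(0,\Gamma)$ is $\|\cdot\|_\Gamma=\|(-\Delta)^{r/2}\cdot\|_{L^2}=\|\cdot\|_{H^r}$, so consistency in the Cameron-Martin norm is exactly consistency in $H^r$; more generally the Hilbert scale attached to $\Gamma$ satisfies $\|\cdot\|_t=\|\cdot\|_{H^{tr}}$, so the almost-sure control of $\|p\|_{H^s}$ is the Hilbert-scale bound $\|\cdot\|_{s/r}$ with $s/r>1$ required by the abstract theorem, and the hypothesis $\prior(\exp(f\|p\|_{H^s}^e))<\infty$ is its exponential tail condition with exponent $e$.

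With these identifications in place I would compute the small-ball exponent that drives the abstract conditions. For a prior controlled in $H^s$, the probability $\prior(B^{H^r}_\epsilon(\truth))$ is dictated by the metric entropy of $H^s$-balls inside $H^r$, namely $\log N(\eta,B_{H^s},\|\cdot\|_{H^r})\asymp\eta^{-d/(s-r)}$, giving the small-ball exponent $\rho=d/(s-r)$ and hence $\lambda=1-\rho/2=\frac{s-r-d/2}{s-r}$. The hypothesis $s>r+\frac d2$ is exactly $\rho<2$ (equivalently $\lambda>0$), which is the threshold that makes the driving Gaussian fluctuation have controllable suprema over $H^s$-balls. Substituting $\rho=2(1-\lambda)$ into the two-regime condition of the general corollary reproduces $e>2-2\lambda$ when $\rho\le1$ (that is $\lambda\ge\tfrac12$) and $e>-1+\sqrt{8-8\lambda}=-1+2\sqrt{\rho}$ when $\rho\ge1$ (that is $\lambda\le\tfrac12$).

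To see that these are the right conditions — which is the engine behind the cited general result — recall that the log-likelihood ratio relative to the truth is $-\frac n2\|p-\truth\|_{H^r}^2+\sqrt n\,\langle\xi,p-\truth\rangle_\Gamma$. I would bound the posterior mass of $B^{H^r}_{M\epsilon}(\truth)^c$ by a ratio: a lower bound for the evidence obtained by restricting to a fixed small ball, where $\|p-\truth\|_{H^r}$ is small and $\prior(B^{H^r}_\delta(\truth))>0$ because $\truth\in\mathrm{supp}_{H^r}\prior$; and an upper bound for the numerator obtained by splitting the complement into a sieve $\{\|p\|_{H^s}\le R\}$ and a tail $\{\|p\|_{H^s}>R\}$. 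On the sieve the deterministic term forces exponential decay while the stochastic term is controlled by Dudley's entropy bound, $\mathbb E[\sup]\lesssim R^{\rho/2}\,a^{1-\rho/2}$ for the supremum over an $H^s$-ball of radius $R$ at $H^r$-distance $a$ from $\truth$, together with Gaussian concentration; on the tail the exponential moment gives $\prior(\|p\|_{H^s}>R)\lesssim e^{-fR^e}$. Requiring both contributions to vanish forces $R$ to grow slowly enough for the sieve ($R\ll n^{1/\rho}$) yet fast enough for the tail ($R\gg n^{1/e}$), and the compatibility $e>\rho$ is precisely $e>2-2\lambda$.

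The hard part, inherited from the general theorem rather than created here, is the control of the stochastic linear functional $\langle\xi,p-\truth\rangle_\Gamma$ uniformly over the non-compact parameter space: its $H^r$-representer is the rough sample $\xi$ itself, which lies in $H^t$ only for $t<r-\frac d2$ and not in $H^r$, so the compensating smoothness must come entirely from the prior tails. The change of regime at $\lambda=\tfrac12$ (that is $\rho=1$) reflects a change in the sharp estimate for this Gaussian fluctuation term — a crude sieve/tail balance already yields $e>\rho$ for all $\rho$, but for $\rho\ge1$ a finer, quadratic optimisation of the fluctuation against the deterministic decay is available and improves the requirement to $e>-1+2\sqrt\rho$ — which is why the condition acquires its square-root form in the rougher regime $\lambda\le\tfrac12$.
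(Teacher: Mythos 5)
Your core plan --- reading the statement as Corollary \ref{cor:postConSmallNoRate} specialised to $\Gamma=(-\Delta)^{-r}$ via the dictionary $\mathcal{H}^{t}=H^{tr}$, Cameron--Martin space $\mathcal{H}^{1}=H^{r}$, abstract smoothness index $s/r>1+\sigma_{0}$, tail exponent $e$ unchanged --- is exactly the paper's route, and your translation of the hypotheses and of the two-regime condition on $e$ is correct (note $-1+2\sqrt{2}\sqrt{1-\lambda}=-1+\sqrt{8-8\lambda}$). Where you genuinely deviate is in how you identify $\lambda$: the paper obtains $\lambda=\frac{s/r-1-\sigma_{0}}{s/r-1}=\frac{s-r-d/2}{s-r}$ as a Hilbert-scale \emph{interpolation} exponent, with $\sigma_{0}=\frac{d}{2r}$ supplied by Weyl asymptotics for $(-\Delta)^{-r}$ (this is what makes the pairing $\langle a-\truth,\xi\rangle_{1}$ controllable, cf.\ Lemma \ref{lem:regularityNoise} and the proof of Theorem \ref{thm:NoTail}), whereas you produce the same number as $1-\rho/2$ with $\rho=d/(s-r)$ the \emph{metric entropy} exponent of $H^{s}$-balls in $H^{r}$, fed into a Dudley/chaining bound over a sieve. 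That alternative mechanism is coherent and would buy a proof in the style of the testing/entropy literature, but two caveats are worth recording. First, your assertion that $\prior\bigl(B_{\epsilon}^{H^{r}}(\truth)\bigr)$ ``is dictated by the metric entropy'' conflates two independent objects: in the paper the small-ball exponent $\rho$ is a separate \emph{assumption} on the prior, unrelated to $\lambda$ in general, and it plays no role in this rate-free corollary, where only positivity of $\prior\bigl(B_{\epsilon}^{H^{r}}(\truth)\bigr)$ from the support condition is needed --- which, to your credit, is what you actually invoke in your ratio argument. Second, the decisive step behind the square-root threshold $e>-1+\sqrt{8-8\lambda}$ for $\lambda\le\frac{1}{2}$ is only asserted in your sketch (``a finer, quadratic optimisation \dots is available''), while in the paper it is the outcome of the explicit optimisation of Theorem \ref{thm:postConSmallGeneral}, choosing $\eta$ and $\theta$ maximal and reducing to a quadratic inequality in the H\"older conjugate parameter $p$. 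Since your proof cites Corollary \ref{cor:postConSmallNoRate} rather than reproving it, this does not create a gap for the statement at hand; but as a standalone derivation of the general engine, your sketch stops short of its hardest point.
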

\begin{rem}
\label{Rem:NoRateCorollary-Gaussian}If the prior is Gaussian, then
the above inequality is satisfied because $e=2$ and the RHS is less
than $2$ for any $\lambda\in(0,1)$. Thus, the only remaining condition
is $s>r+\mbox{\ensuremath{\frac{d}{2}}}$.
\end{rem}

\begin{rem}
\label{Rem:NoRateCorollary-LogConcave}It is worth pointing out that
for the large class of log-concave measures it is known that $e\ge1$,
for details consult \cite{1974BorellLogConcave}. 
\end{rem}
In the statistics literature regression models are mainly concerned
with pointwise observations. Despite its name this is also true for
\textit{functional data analysis}\textbf{ }(see \cite{ferraty2006nonparametric}).
However, the regression problem associated with $\mathcal{O}_{n}=\text{Id}$
can be viewed as a particular linear inverse problem. As described
in the introduction, this has been studied for Gaussian priors in
\cite{knapik2011bayesian} and \cite{2012arXiv1203.5753A}. Although
our focus lies on establishing posterior consistency for general priors
and non-linear models, we also obtain rates which in the special case
of Gaussian priors are close to the optimal rates given in the references
above.

\subsection{The Small Noise Limit for Functional Response\label{sub:The-Small-Noise}}

In the following we study posterior consistency for a Bayesian regression
problem assuming that the data takes values in the Hilbert space $H$.
In particular we deal with the regression model 
\begin{equation}
y=\input+\frac{1}{\sqrt{n}}\xi\label{eq:postConInvers}
\end{equation}
with $y,\,\,\input$ and $\xi$ all being elements of $H$. Moreover,
we suppose that the observational noise $\noise$ is a Gaussian random
field $\mu_{\noise}=\mathcal{N}(0,\obsCov)$ on $H$ and we assume
that it satisfies the following assumption. 
\begin{assumption}
\label{ass:noiseTraceSumm}Suppose there is $\sigma_{0}\ge$ such
that $\Gamma^{\sigma}$ is trace-class for all $\sigma>\sigma_{0}$,
that is 
\[
\sum_{k=1}^{\infty}\lambda_{k}^{2\sigma}<\infty.
\]

\end{assumption}
Imposing this assumption, it becomes possible to quantify the regularity
of the observational noise in terms of the Hilbert scale defined with
respect to the covariance operator (c.f. \ref{sec:Notation}).
More precisely, this is possible due to Lemma \ref{lem:regularityNoise}.
from \cite{2012arXiv1203.5753A}.

The regression model in Equation (\ref{eq:postConInvers}) is a special
case of a general inverse problem as considered in Equation (\ref{eq:IP}).
Hence the corresponding posterior takes the following form (c.f. Equation
(\ref{eq:bayesInv})). 

\begin{equation}
\frac{d\mu^{\data}}{d\mu_{0}}=Z(n,\noise)\exp\left(-\frac{1}{2}n\left\Vert \input\right\Vert _{1}^{2}+n\left\langle \input,\data\right\rangle _{1}\right).\label{eq:BRPposterior}
\end{equation}
Assuming that the data takes values in the Hilbert space $H$, Equation
(\ref{eq:BRPposterior}) can simply be derived by an application of
the Cameron-Martin lemma in combination with the conditioning lemma
(Lemma 5.3 in \cite{nonlinearsampling}).
 We generate data for a fixed
'truth' $\truth$
\begin{equation}
\data=\truth+\frac{1}{\sqrt{n}}\noise.\label{eq:postConTruth}
\end{equation}
By changing the normalising constant, we may rewrite the posterior
in the following way 
\begin{equation}
\frac{d\mu^{\data}}{d\mu_{0}}=Z(n,\noise)\exp\left(-\frac{n}{2}\left\Vert \input-\truth\right\Vert _{1}^{2}+\sqrt{n}\left\langle \input-\truth,\noise\right\rangle _{1}\right).\label{eq:identityPosterior}
\end{equation}

The normalising constant is bounded above and below for $y_{n}=\mathcal{G}_{n}(\truth)+\noise_{n}$ for $\mu_{\xi_n}$-a.e. $\xi$. In fact, this holds  under weaker assumptions than needed for our results.
\begin{lem}
\label{lem:BRPnormalising}Suppose $\prior\left(\exp\left(f\left\Vert a\right\Vert _{s}^{e}\right)\right)<\infty$
for $s>1+\sigma_{0}$ and $e>\frac{2\sigma_{0}}{s-1+\sigma_{0}}$.
Then the normalising constant in Equation (\ref{eq:identityPosterior})
is bounded for $\mu_{\noise_n}$-a.s. and every $\truth\in\mathcal{H}^s$ above and away form zero.\end{lem}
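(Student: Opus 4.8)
The plan is to write $Z(n,\xi)=1/I(n,\xi)$ with
\[
I(n,\xi)=\int_{H}\exp\Bigl(-\tfrac{n}{2}\|a-a^{\dagger}\|_{1}^{2}+\sqrt{n}\,\langle a-a^{\dagger},\xi\rangle_{1}\Bigr)\,\mu_{0}(da),
\]
and to show that for $\mu_{\xi_n}$-a.e.\ noise realisation and every $a^{\dagger}\in\mathcal{H}^{s}$ one has $0<I(n,\xi)<\infty$, so that $Z$ is finite and strictly positive. First I observe that the hypothesis $\mu_{0}\bigl(\exp(f\|a\|_{s}^{e})\bigr)<\infty$ with $s>1+\sigma_{0}>1$ forces $\|a\|_{s}<\infty$, hence $a-a^{\dagger}\in\mathcal{H}^{1}$ (the Cameron--Martin space of the noise) for $\mu_{0}$-a.e.\ $a$; thus each pairing $\langle a-a^{\dagger},\xi\rangle_{1}$ is a well-defined Paley--Wiener integral and the integrand is a $\mu_{0}$-a.s.\ strictly positive measurable function.

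The finiteness $I<\infty$ is the heart of the matter and the place where the precise thresholds on $(s,e)$ enter. The obstacle is that the linear term $\sqrt{n}\langle a-a^{\dagger},\xi\rangle_{1}$ is controlled only by a norm of $\xi$ dual to $\|\cdot\|_{1}$, which is infinite for a Gaussian sample; one must borrow regularity from the prior while spending as little of it as possible, since the admissible exponent $e$ may be smaller than $1$. The device has three parts. (i) A duality estimate in the Hilbert scale, $\langle a-a^{\dagger},\xi\rangle_{1}\le\|a-a^{\dagger}\|_{\lambda}\,\|\xi\|_{2-\lambda}$, obtained by splitting the weight $\lambda_{k}^{-2}=\lambda_{k}^{-\lambda}\lambda_{k}^{-(2-\lambda)}$ and applying Cauchy--Schwarz; by Assumption~\ref{ass:noiseTraceSumm} the factor $\|\xi\|_{2-\lambda}$ is finite $\mu_{\xi_n}$-a.s.\ exactly when $\lambda>1+\sigma_{0}$. (ii) The interpolation inequality $\|a-a^{\dagger}\|_{\lambda}\le\|a-a^{\dagger}\|_{1}^{1-\theta}\|a-a^{\dagger}\|_{s}^{\theta}$ with $\theta=(\lambda-1)/(s-1)$, which needs $1+\sigma_{0}<\lambda<s$ and so uses $s>1+\sigma_{0}$. (iii) Young's inequality to split the resulting product, absorbing the factor $\|a-a^{\dagger}\|_{1}^{1-\theta}$ into the good quadratic term $-\tfrac{n}{2}\|a-a^{\dagger}\|_{1}^{2}$. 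After this absorption the exponent is bounded above, up to an additive constant, by a constant depending on $n$, on $\|\xi\|_{2-\lambda}$ and on $\|a^{\dagger}\|_{s}$, times $\|a\|_{s}^{2\theta/(1+\theta)}$.

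It then remains to check that $\int\exp\bigl(C\|a\|_{s}^{2\theta/(1+\theta)}\bigr)\,\mu_{0}(da)<\infty$. Since $2\theta/(1+\theta)$ is increasing in $\theta$ and its infimum over the admissible range $\lambda\downarrow 1+\sigma_{0}$ equals $2\sigma_{0}/(s-1+\sigma_{0})$, the hypothesis $e>2\sigma_{0}/(s-1+\sigma_{0})$ lets me fix $\lambda$ slightly above $1+\sigma_{0}$ so that $e':=2\theta/(1+\theta)<e$. For $e'<e$ and any finite $C$ one has $C t^{e'}\le f t^{e}+\mathrm{const}$, whence the integral is dominated by $\mu_{0}\bigl(\exp(f\|a\|_{s}^{e})\bigr)<\infty$, giving $I<\infty$. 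For the lower bound $I>0$ I apply Jensen's inequality, $I\ge\exp\bigl(\int[-\tfrac{n}{2}\|a-a^{\dagger}\|_{1}^{2}+\sqrt{n}\langle a-a^{\dagger},\xi\rangle_{1}]\,\mu_{0}(da)\bigr)$; both integrals on the right are finite because $\|\cdot\|_{1}\lesssim\|\cdot\|_{s}$ has all polynomial $\mu_{0}$-moments and $|\langle a-a^{\dagger},\xi\rangle_{1}|\le\|a-a^{\dagger}\|_{s}\|\xi\|_{2-\lambda}$, so $I$ is bounded below by a strictly positive (though $n$- and $\xi$-dependent) constant. The main obstacle is step (iii) together with the exponent bookkeeping of this last paragraph: the role of the hypothesis $e>2\sigma_{0}/(s-1+\sigma_{0})$ is precisely to make $2\theta/(1+\theta)<e$ achievable, so the argument is tight and leaves essentially no slack in the choice of $\lambda$.
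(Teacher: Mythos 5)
Your proof is correct and, for the substantive direction (finiteness of the normalising integral, i.e.\ the bound on $Z$ away from zero), it follows essentially the same route as the paper: your duality estimate $\langle a-a^{\dagger},\xi\rangle_{1}\le\Vert a-a^{\dagger}\Vert_{\lambda}\Vert\xi\Vert_{2-\lambda}$ is exactly the paper's smoothing step with $\lambda=1+\sigma_{0}+\gamma$, followed by the same interpolation between $\mathcal{H}^{1}$ and $\mathcal{H}^{s}$ and the same Young-inequality absorption into the quadratic term, and your limiting exponent $2\theta/(1+\theta)\downarrow 2\sigma_{0}/(s-1+\sigma_{0})$ coincides with the paper's $q(1-\lambda)$ as $\gamma\downarrow 0$. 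The only genuine (but minor) difference is in the easy direction ($I>0$, i.e.\ the upper bound on $Z$): you use Jensen's inequality together with polynomial moments of $\Vert a\Vert_{s}$, whereas the paper bounds the exponent below on a ball $B_{M}^{1+\sigma_{0}+\gamma}(0)$ and uses that this ball has positive prior mass; both arguments are valid, and yours is arguably a little cleaner since it avoids the (true, but unstated in the paper) verification that some such ball carries positive mass.
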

\begin{proof}
See  \ref{sec:normal}. 
\end{proof}

The expression above suggests that the posterior concentrates in balls
around the truth in the Cameron-Martin norm. First, we make this fact
rigorous for priors which are a.s. uniformly bounded with respect
to the $\Vert\cdot\Vert_{s}$-norm. In a second step, we assume that
the prior has higher exponential moments. Considering Gaussian priors,
we show that our rate is close to the optimal rate obtained in \cite{knapik2011bayesian}.

\subsubsection{Posterior Consistency for Uniformly Bounded Priors}

The following theorem can be viewed as a preliminary step towards
Theorem \ref{thm:postConSmallGeneral} which contains our most general
posterior consistency result for the Bayesian regression problem in
the small noise limit. While containing our main ideas, the following
result also establishes an explicit rate for posterior consistency
which will be used for the \EIP\  in Section \ref{sec:Results-for-Elliptic}.
\begin{thm}
\label{thm:NoTail} Suppose that the noise satisfies Assumption \ref{ass:noiseTraceSumm}
and 
\begin{equation}
\bigl\Vert\input\bigr\Vert_{s}\leq U\;\prior\text{-a.s.}\label{eq:postConAsBdd}
\end{equation}
for $s>1+\sigma_{0}.$ If $\truth\in\text{supp}_{\mathcal{H}^{1}}(\prior)$
and {\textup{$\truth\in\mathcal{H}^{s}$,}}
then $\posterior$ is consistent in $\mathcal{H}^{1}$. Additionally,
if the following small ball asymptotic is satisfied 
\begin{equation}
\log(\prior(B_{\epsilon}^{1}(\truth))\succsim-\epsilon^{-\smallBallAsymp},\label{eq:yhmNoTailSmallBall}
\end{equation}
then this holds with rate $Mn^{-\kappa}$ for any $\kappa<\min\left\{ \frac{1}{2(2-\lambda)},\frac{1}{2+\rho}\right\} $
with $\HilbertScalesInter=\frac{s-1-\sigma_{0}}{s-1}$. \end{thm}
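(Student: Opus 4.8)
The plan is to verify Definition~\ref{def:Posterior-consistency} by showing that, for $\mu_{\noise}$-almost every realisation of $\noise$, the posterior mass of the complement $B_{M\epsilon_n}^{1}(\truth)^{c}$ tends to zero; since almost sure convergence implies convergence in $\mathbb{P}_{\noise}$-probability, this delivers both the unconditional consistency claim and the rate. Abbreviating the unnormalised density from Equation~(\ref{eq:identityPosterior}) as $L_n(\input)=\exp(-\frac{n}{2}\|\input-\truth\|_{1}^{2}+\sqrt{n}\langle\input-\truth,\noise\rangle_{1})$, I would write
\[
\posterior\bigl(B_{M\epsilon_n}^{1}(\truth)^{c}\bigr)=\frac{\int_{\|\input-\truth\|_{1}>M\epsilon_n}L_n\,\d\prior}{\int_{H}L_n\,\d\prior}
\]
and estimate the numerator from above and the denominator from below, the latter being positive and finite $\mu_{\noise}$-a.s. by Lemma~\ref{lem:BRPnormalising}, so that the ratio is well defined.

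The technical heart is the control of the cross term $\langle\input-\truth,\noise\rangle_{1}$, a Cameron--Martin pairing against a noise sample that almost surely lies outside the Cameron--Martin space. Writing $\|\cdot\|_{t}$ for the Hilbert-scale norm associated with $\obsCov$, Cauchy--Schwarz in the scale gives $|\langle u,\noise\rangle_{1}|\le\|u\|_{2-b}\,\|\noise\|_{b}$ for every $b$, and by the noise regularity furnished by Assumption~\ref{ass:noiseTraceSumm} and Lemma~\ref{lem:regularityNoise} the factor $\|\noise\|_{b}$ is finite a.s. as soon as $b<1-\sigma_{0}$. Taking $b$ arbitrarily close to $1-\sigma_{0}$ and interpolating $\|u\|_{2-b}$ between the two norms we control, namely $\|u\|_{1}$ and $\|u\|_{s}$ (the latter bounded $\prior$-a.s. by a constant $\bar U$ depending on $U$ and $\|\truth\|_{s}$ via (\ref{eq:postConAsBdd}) and $\truth\in\mathcal{H}^{s}$), produces the key estimate $|\langle\input-\truth,\noise\rangle_{1}|\le C\,\|\input-\truth\|_{1}^{\lambda}$ with exactly $\lambda=\frac{s-1-\sigma_{0}}{s-1}$. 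The strict inequality $b<1-\sigma_{0}$ costs an arbitrarily small loss in this exponent, which is precisely why the conclusion is stated for every $\kappa$ lying strictly below the threshold rather than at it.

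Setting $r:=\|\input-\truth\|_{1}$, the integrand is then dominated by $\exp(g(r))$ with $g(r)=-\frac{n}{2}r^{2}+C\sqrt{n}\,r^{\lambda}$, a function peaking at $r^{\ast}\sim n^{-1/(2(2-\lambda))}$ and decreasing beyond it. Because $r\lesssim\|\input-\truth\|_{s}\le\bar U$ on $\mathrm{supp}\,\prior$, the numerator integrates over the bounded range $M\epsilon_n<r\le\bar U$; the assumption $\kappa<\frac{1}{2(2-\lambda)}$ forces $M\epsilon_n>r^{\ast}$ for large $n$, so $g$ is decreasing there and the numerator is at most $\exp(g(M\epsilon_n))$, with the quadratic term dominating, hence $g(M\epsilon_n)\sim-\frac{M^{2}}{2}n^{1-2\kappa}$. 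For the denominator I would restrict the integral to $B_{\delta_n}^{1}(\truth)$, apply the same cross-term bound and invoke the small-ball hypothesis (\ref{eq:yhmNoTailSmallBall}) to obtain $\int_{H}L_n\,\d\prior\ge\exp(-\frac{n}{2}\delta_n^{2}-C\sqrt{n}\,\delta_n^{\lambda}-c\,\delta_n^{-\rho})$.

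Combining the two estimates with $\delta_n=n^{-\gamma}$ bounds the complement mass by $\exp(-\frac{M^{2}}{2}n^{1-2\kappa}+\frac12 n^{1-2\gamma}+Cn^{1/2-\gamma\lambda}+c\,n^{\gamma\rho})$; the three correction exponents are $o(n^{1-2\kappa})$ exactly when $\kappa<\gamma<\frac{1-2\kappa}{\rho}$ (the constraint $\frac{2\kappa-1/2}{\lambda}<\gamma$ being automatic from $\kappa<\frac{1}{2(2-\lambda)}$), and such a $\gamma$ exists iff $\kappa<\frac{1}{2+\rho}$. Under $\kappa<\min\{\frac{1}{2(2-\lambda)},\frac{1}{2+\rho}\}$ the exponent therefore tends to $-\infty$ and the complement mass vanishes. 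For the plain consistency statement with fixed $\epsilon$ the small-ball asymptotic is not needed: choosing a fixed $\delta<\epsilon$ with $\prior(B_{\delta}^{1}(\truth))>0$, which is possible since $\truth\in\mathrm{supp}_{\mathcal{H}^{1}}(\prior)$, the leading term $-\frac{n}{2}(\epsilon^{2}-\delta^{2})$ already drives the ratio to zero. The one genuinely delicate point is the cross-term estimate: one must read off the correct interpolation exponent $\lambda$ from the interplay between the a.s.\ noise regularity ($b<1-\sigma_{0}$) and the prior regularity ($\|\cdot\|_{s}\le U$), and then check that the three competing scales --- the quadratic concentration $n r^{2}$, the stochastic fluctuation $\sqrt{n}\,r^{\lambda}$, and the small-ball cost $\delta_n^{-\rho}$ --- balance to yield exactly the stated threshold; the remaining steps are routine Laplace-type estimates.
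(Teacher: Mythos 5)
Your proposal is correct and follows essentially the same route as the paper's proof: the cross term $\langle \input-\truth,\noise\rangle_1$ is controlled by Cauchy--Schwarz in the Hilbert scale together with the noise regularity of Lemma \ref{lem:regularityNoise} and the interpolation inequality (giving $\lambda=\frac{s-1-\sigma_0}{s-1}$ up to an arbitrarily small loss), the complement mass is bounded via the monotonicity of $-\frac{n}{2}r^2+C\sqrt{n}\,r^{\lambda}$, the denominator is lower-bounded by restricting to a shrinking ball and invoking the small-ball asymptotic, and the same constraint arithmetic yields $\kappa<\min\{\frac{1}{2(2-\lambda)},\frac{1}{2+\rho}\}$. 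The only cosmetic differences are that you work with a fixed $\mu_{\noise}$-a.s. realisation (finite random constant $\|\noise\|_{b}$) instead of the paper's slowly growing sets $S_n=B^{1-\sigma_0-\gamma}_{K_n'}(0)$, and you allow a denominator radius $n^{-\gamma}$ with $\gamma$ decoupled from $\kappa$, neither of which changes the argument or the resulting rate.
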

\begin{proof}
Our proof is based on the observation that posterior consistency is
implied by the existence of a sequence of subsets $S_{n}$ such that
$\noiseMeasure(S_{n})\rightarrow1$ and 
\begin{equation}
\sup_{\xi\in S_{n}}\frac{\posterior(B_{\epsilon n^{-\kappa}}^{1}(\truth)^{c})}{\posterior\left(B_{\epsilon n^{-\kappa}}^{1}(\truth)\right)}\rightarrow0\text{ for }n\rightarrow\infty\label{eq:RatioLimit}
\end{equation}
where $\data_{n}=\truth+\frac{1}{\sqrt{n}}\noise$. This implication
holds because $$\posterior(B_{\epsilon n^{-\kappa}}^{1}(\truth))+\posterior(B_{\epsilon n^{-\kappa}}^{1}(\truth)^{c})=1$$
and thus
\begin{equation}
\sup_{\xi\in S_{n}}\frac{\posterior(B_{\epsilon n^{-\kappa}}^{1}(\truth)^{c})}{\posterior\left(B_{\epsilon n^{-\kappa}}^{1}(\truth)\right)}\leq\delta\quad\Rightarrow\quad\frac{1}{1+\delta}\le\sup_{\xi\in S_{n}}\posterior\left(B_{\epsilon n^{-\kappa}}^{1}(\truth)\right)\label{eq:RatioLimitConseq}
\end{equation}
which together with $\noiseMeasure(S_{n})\rightarrow1$
implies posterior consistency, for details see Equation (\ref{eq:posteriorConsistency}).

Fix $\gamma>0$. Then $S_{n}=B_{K_{n}^{\prime}}^{1-\sigma_{0}-\gamma}(0)$
with $K_{n}^{\prime}\uparrow\infty$ as $n\rightarrow\infty$ sufficiently
slow. We notice that Lemma \ref{lem:regularityNoise} implies that
$\mathbb{P}_{\noise}(\noise\in B_{K_{n}^{\prime}}^{1-\sigma_{0}-\gamma}(0))\rightarrow1$
as $n\rightarrow\infty$. The remainder of the proof will be devoted
to showing that Equation (\ref{eq:RatioLimit}) holds. We bound $\left\langle \input-\truth,\noise\right\rangle _{1}$
by smoothing $\xi$ at the expense of $\input-\input^{\dagger}$
\begin{eqnarray*}
\left|\left\langle \input-\truth,\noise\right\rangle _{1}\right| & \leq & \left|\left\langle \obsCov^{-1+\frac{1-\sigma_{0}-\gamma}{2}}(\input-\truth),\obsCov^{\frac{\sigma_{0}-1+\gamma}{2}}\noise\right\rangle _{1}\right|\\
 & \leq & \left\Vert \input-\truth\right\Vert _{1+\sigma_{0}+\gamma}\left\Vert \xi\right\Vert _{1-\sigma_{0}-\gamma}\\
 & \leq & \left\Vert \input-\truth\right\Vert _{1+\sigma_{0}+\gamma}K_{n}^{\prime}\:\forall\xi\in B_{K_{n}^{\prime}}^{1-\sigma_{0}}(0).
\end{eqnarray*}
Interpolating between $\mathcal{H}^{1}$ and $\mathcal{H}^{s}$ for
$s$ (c.f. Lemma \ref{lem:HScaleInterpolation}) yields

\begin{equation}
\left|\left\langle \input-\truth,\noise\right\rangle _{1}\right|\leq K_{n}^{\prime}\left\Vert \input-\truth\right\Vert _{1}^{\HilbertScalesInter}\left\Vert \input-\truth\right\Vert _{s}^{1-\HilbertScalesInter}\leq K_{n}\left\Vert \input-\truth\right\Vert _{1}^{\HilbertScalesInter}\label{eq:postConNoTailInterp}
\end{equation}
with $\HilbertScalesInter=\frac{s-1-\sigma_{0}-\gamma}{s-1}$. An
application of Equation (\ref{eq:identityPosterior}) yields the following
upper bound
\begin{eqnarray}
\hspace{-2.5cm}\mu^{y}\bigl(B_{\frac{\epsilon}{n^{\kappa}}}^{1}(\truth)\bigr) \hspace{-0.4cm}&\ge \hspace{-0.1cm}Z(n,\xi)\underset{a\in B_{\frac{\epsilon}{2}n^{-\kappa}}^{1}}{\inf}\exp\left(-n\bigl\Vert a-a^{\dagger}\bigr\Vert_{1}^{2}-\sqrt{n}\left\langle a-a^{\dagger},\xi\right\rangle _{1}\right)\prior\hspace{-0.1cm}\left[B_{\frac{\epsilon}{2}n^{-\kappa}}^{1}\left(\truth\right)\right]\nonumber\\
 \hspace{-0.4cm}& \ge \hspace{-0.1cm}Z(n,\xi)\exp\left[-n^{1-2\kappa}\left[\frac{\epsilon\bigl\Vert a-a^{\dagger}\bigr\Vert_{1}}{2}\right]^{2}-K_{n}n^{\frac{1}{2}-\lambda\kappa}\left(\frac{\epsilon}{2}\right)^{\lambda}\right]\hspace{-0.1cm}\prior\hspace{-0.1cm}\left[B_{\frac{\epsilon}{2}n^{-\kappa}}^{1}\left(\truth\right)\right]\hspace{-0.1cm}.\label{eq:postConSmallNotTailLower}
\end{eqnarray}
Similarly, we obtain the following upper bound

\begin{eqnarray*}
\mu^{y}\left(B_{\epsilon n^{-\kappa}}^{1}\left(\truth\right)\right)\leq & Z(n,\xi)\underset{a\in B_{\epsilon n^{-\kappa}}^{1}(\truth)}{\sup}\exp\left(-n\bigl\Vert a-a^{\dagger}\bigr\Vert_{1}^{2}+K_{n}\sqrt{n}\left\Vert \input-\truth\right\Vert _{1}^{\HilbertScalesInter}\right).
\end{eqnarray*}
The expression in the exponential in Equation (\ref{eq:identityPosterior})
can be rewritten as a function $f(d)=-nd^{2}+K_{n}n^{\frac{1}{2}}d^{\lambda}$
of $d=\Vert\input-\truth\Vert$ which is decreasing on $[(K_{n}\lambda n^{-\frac{1}{2}}/2)^{\frac{1}{2-\lambda}},\infty)$.
If
\begin{equation}
-\frac{1}{2(2-\lambda)}<-\kappa,\label{eq:postConSmallNoTailRate1}
\end{equation}
then $\left\Vert \input-\truth\right\Vert _{1}^{2}\in[(K_{n}\lambda n^{-\frac{1}{2}}/2)^{\frac{1}{2-\lambda}},\infty)$
for $a\in B_{\epsilon n^{-\kappa}}^{1}(\truth)$ and $n$ large enough
leading to 
\begin{equation}
\mu^{y}\left(B_{\epsilon n^{-\kappa}}^{1}(\truth)\right)\le Z(n,\xi)\exp\left(-\epsilon^{2}n^{1-2\kappa}+n^{\frac{1}{2}-\kappa\lambda}\epsilon^{\lambda}K_{n}\right).\label{eq:postConSmallNotTailUpper}
\end{equation}
We now derive sufficient conditions for $n^{1-2\kappa}$ to be the
dominant term in the exponential in the Equations (\ref{eq:postConSmallNotTailLower})
and (\ref{eq:postConSmallNotTailUpper}) implying Equation (\ref{eq:RatioLimit}).
This is the case if, in addition to Inequality (\ref{eq:postConSmallNoTailRate1}),
\begin{eqnarray*}
1-2\kappa&> & \frac{1}{2}-\kappa\lambda\text{ and}\\
\mbox{\ensuremath{\log\prior\left(B_{\frac{\epsilon n^{-\kappa}}{2}}^{1}(\truth)\right)}}&\gtrsim & -n^{1-2\kappa}
\end{eqnarray*}
hold. The first line is equivalent to Inequality (\ref{eq:postConSmallNoTailRate1})
and using Inequality (\ref{eq:yhmNoTailSmallBall}) the second line
is implied by 
\begin{equation}
1-2\kappa>\kappa\rho.\label{eq:postConSmallNoTailRate2}
\end{equation}
Thus, the Inequalities (\ref{eq:postConSmallNoTailRate1}) and (\ref{eq:postConSmallNoTailRate2})
imply that $-n^{1-2\kappa}$ is the dominant term in the Inequalities
(\ref{eq:postConSmallNotTailLower}) and (\ref{eq:postConSmallNotTailUpper})
establishing Equation (\ref{eq:RatioLimit}). Letting $\gamma\rightarrow0$
concludes the proof.
\end{proof}

\subsubsection{Extension to the Case of Unbounded Priors}

In the following we weaken the assumptions of Theorem \ref{thm:NoTail}
by assuming that the prior has exponential moments of $\bigl\Vert\cdot\bigr\Vert_{s}^{e}$. The price we pay is that the algebraic rate of convergence is implicitly
given as a low-dimensional optimisation problem.

\begin{thm}
\label{thm:postConSmallGeneral}Suppose that the noise satisfies Assumption
\ref{ass:noiseTraceSumm}, the prior satisfies the small ball asymptotic
\[
\log(\prior(B_{\epsilon}^{1}(\truth))\succsim-\epsilon^{-\smallBallAsymp}
\]
 and $\int\exp(3f\left\Vert \input\right\Vert _{s}^{e})d\prior(\input)$$<\infty$
for $f>0$ and $e>0$ for $s>1+\sigma_{0}$. If the following optimisation
problem has a solution $\kappa^{\star}>0$, then for any $\kappa<\kappa^{\star}$
the posterior $\posterior$ is consistent in $\mathcal{H}^{1}$ for
$\truth$ in $\mathcal{H}^{s}$ with rate $n^{-\kappa}$. 

\vspace{0.5cm}

\textup{\noindent$\text{Maximize }\kappa\text{ with respect to }\kappa,p\ge1,\eta,\theta\ge0\text{ subject to}$}
\begin{eqnarray*}
\frac{1}{2}+\eta\frac{p}{q}-\kappa\lambda p&<1-2\kappa \hspace{5cm} &\mbox{(\ref{eq:postConSmallRate1})}\\
\frac{1}{2}-\HilbertScalesYoungs+(1-\HilbertScalesInter)q\nRadiusSnorm&<1-2\deltarate & \mbox{(\ref{eq:postConSmallRate2})}\\
\rho\kappa & <e\theta & \mbox{(\ref{eq:postConSmallRate3})}\\
\rho\kappa&<1-2\kappa &\mbox{(\ref{eq:postConSmallrate4})}\\
\lambda p & <2 &\mbox{(\ref{eq:postConSmallRate5})}\\
\left(\eta\frac{p}{q}-\frac{1}{2}\right)\frac{1}{2-\lambda p} & <-\kappa &\mbox{(\ref{eq:postConSmallRate6})}\\
(1-\HilbertScalesInter)q & <e&\mbox{(\ref{eq:postConSmallRate7})}\\
\left(\frac{1}{2}-\HilbertScalesYoungs\right)\left(1+\frac{1}{e-(1-\HilbertScalesInter)q}\right) & <\max(1-2\kappa,\nRadiusSnorm e)\quad  &\mbox{(\ref{eq:postConSmallRate8})}
\end{eqnarray*}
where $\lambda:=\frac{s-1+\sigma_{0}}{s-1}$. \end{thm}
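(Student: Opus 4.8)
The plan is to retain the architecture of the proof of Theorem~\ref{thm:NoTail} and to localise the new difficulty in a single tail estimate. As there, it suffices to produce noise sets $S_n$ with $\mu_{\noise}(S_n)\to1$ on which the ratio in Equation~(\ref{eq:RatioLimit}), namely $\posterior(B_{\epsilon n^{-\kappa}}^1(\truth)^c)/\posterior(B_{\epsilon n^{-\kappa}}^1(\truth))$, tends to zero; the normalising constant $Z(n,\noise)$ cancels in this ratio and is in any case controlled by Lemma~\ref{lem:BRPnormalising}. I would again take $S_n=B_{K_n'}^{1-\sigma_0-\gamma}(0)$ with $K_n'\uparrow\infty$ slowly, so that Lemma~\ref{lem:regularityNoise} gives $\mu_{\noise}(S_n)\to1$, and work from the posterior density~(\ref{eq:identityPosterior}). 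The decisive difference from Theorem~\ref{thm:NoTail} is that, lacking the almost-sure bound $\|\input\|_s\le U$, the interpolation estimate~(\ref{eq:postConNoTailInterp}) can only be pushed to
\[
|\langle \input-\truth,\noise\rangle_1|\le K_n'\,\|\input-\truth\|_1^{\lambda}\,\|\input-\truth\|_s^{1-\lambda},
\]
in which the factor $\|\input-\truth\|_s^{1-\lambda}$ is no longer bounded. Taming this factor is the entire content of the extension.

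The idea is to trade integrability against this unbounded factor using the exponential-moment hypothesis $\int\exp(3f\|\input\|_s^e)\,d\prior<\infty$. I would split the exponent in~(\ref{eq:identityPosterior}) as $E_1+E_2$, with $E_1$ carrying the Gaussian concentration $-\frac{n}{2}\|\input-\truth\|_1^2$ and $E_2$ the cross term, and apply H\"older's inequality with conjugate exponents $p,q$ to the integral defining the posterior mass. Inside the split, Young's inequality with free parameter $\eta$ decouples the product $\|\input-\truth\|_1^{\lambda}\|\input-\truth\|_s^{1-\lambda}$ into an $\mathcal{H}^1$-power, to be absorbed against the quadratic term (which forces $\lambda p<2$), and an $\mathcal{H}^s$-power of order at most $e$, so that the exponential moment renders $\int\exp(qE_2)\,d\prior$ finite (which forces $(1-\lambda)q<e$). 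The parameter $\theta$ enters as a truncation scale: I would discard the region $\{\|\input\|_s>n^{\theta}\}$, whose posterior mass the exponential moment controls directly, and carry out the H\"older estimate on its complement where $\|\input-\truth\|_s$ is at most a power of $n$.

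For the lower bound on $\posterior(B_{\epsilon n^{-\kappa}}^1(\truth))$ I would mimic Theorem~\ref{thm:NoTail} after intersecting the small ball with $\{\|\input\|_s\le n^{\theta}\}$, so that the cross term is again controlled; the exponential moment guarantees that this intersection removes only a vanishing fraction of the small-ball mass, so that the bound retains the factor coming from $\log\prior(B_{\epsilon n^{-\kappa}/2}^1(\truth))\succsim-n^{\rho\kappa}$. Assembling the two bounds, the eight inequalities of the theorem are exactly the conditions under which the favourable exponent $-n^{1-2\kappa}$ dominates every competitor: the post-H\"older/Young cross-term exponents (\ref{eq:postConSmallRate1}), (\ref{eq:postConSmallRate2}) and (\ref{eq:postConSmallRate6}); the integrability and truncation budget (\ref{eq:postConSmallRate5}), (\ref{eq:postConSmallRate7}) and (\ref{eq:postConSmallRate8}); and the small-ball penalty (\ref{eq:postConSmallRate3}) and (\ref{eq:postConSmallrate4}). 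Maximising $\kappa$ over $p,\eta,\theta$ subject to these yields the rate $n^{-\kappa}$ for every $\kappa<\kappa^{\star}$, and letting $\gamma\to0$ closes the argument.

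The main obstacle is the simultaneous calibration of the four free parameters. The H\"older exponent $q$ must be small enough that $\int\exp(qE_2)\,d\prior$ stays finite against the exponential moment, yet its conjugate $p$ must be large enough --- while still obeying $\lambda p<2$ --- for the $\mathcal{H}^1$ factor to keep the decisive quadratic concentration; meanwhile $\eta$ and the truncation scale $\theta$ must be chosen so that neither the transferred $\mathcal{H}^s$-power nor the discarded large-norm tail overwhelms $n^{1-2\kappa}$, all without exhausting the small-ball budget $n^{\rho\kappa}$. Showing that these competing demands are jointly feasible with a strictly positive $\kappa$ --- equivalently that the region cut out by the eight inequalities is non-empty with an attained optimum $\kappa^{\star}$ --- and propagating the slowly growing constants $K_n'$ cleanly through every estimate, is where the real work lies.
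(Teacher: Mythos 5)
Your proposal follows essentially the same route as the paper's own proof in Appendix C: the ratio criterion and noise sets inherited from Theorem \ref{thm:NoTail}, the interpolation bound, Young-type decoupling of $\left\Vert \input-\truth\right\Vert _{1}^{\lambda}\left\Vert \input-\truth\right\Vert _{s}^{1-\lambda}$ with exponents $p,q$ and weight $\eta$ (forcing $\lambda p<2$ and $(1-\lambda)q<e$), truncation at $R=n^{\theta}$ in $\mathcal{H}^{s}$, a lower bound on the small ball intersected with the truncation region, and Markov plus the exponential moment for the discarded tail, with the eight inequalities arising exactly as domination conditions and $\gamma\rightarrow0$ at the end. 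The only substantive work you defer --- making the tail term $\posterior\left(B_{\epsilon n^{-\kappa}}^{1}(\truth)^{c}\cap B_{R}^{s}(0)^{c}\right)$ rigorous, which the paper does via the integration-by-parts bound on $\posterior\left(\exp(f\left\Vert \cdot\right\Vert _{s}^{e})\chi\right)$ leading to condition (\ref{eq:postConSmallRate8}) --- is precisely the part you correctly flag as the remaining real work, so the plan is sound and matches the paper.
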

\begin{proof}
See Appendix \ref{sec:Proof-of-Theorem}.\end{proof}
\begin{rem}
\label{rem:postConGeneralS}In general, $e(s)$ might depend on $s$
for $\int\exp(3f\left\Vert \input\right\Vert _{s}^{e})d\prior(\input)$$<\infty$
to hold. Therefore the rate might be improved by optimising over different
$s>1+\sigma_{0}$. 
\end{rem}
Whereas the algebraic rate in Theorem \ref{thm:postConSmallGeneral}
is implicit, the following corollary yields a simple condition implying
posterior consistency.
\begin{cor}
\label{cor:postConSmallNoRate}Suppose that the noise satisfies Assumption
\ref{ass:noiseTraceSumm}, \textup{$\truth\in\text{supp}_{\mathcal{H}^{1}}(\prior)$}
and $\int\exp(3f\left\Vert \input\right\Vert _{s}^{e})d\prior(\input)$$<\infty$
for $f>0$, $e>0$ and $s>1+\sigma_{0}$. If one of the following
two conditions holds
\begin{eqnarray*}
0<\lambda\le\frac{1}{2} &  \text{ and }\quad &e>-1+2\sqrt{2}\sqrt{1-\lambda}\quad \text{or }\\
\frac{1}{2}<\lambda<1 &   \text{and }\quad & e>2-2\lambda,
\end{eqnarray*}
then $\posterior$ is posterior consistent for $\truth$ in $\mathcal{H}^{s}$.\end{cor}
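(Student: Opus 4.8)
The plan is to read the statement as a feasibility assertion for the optimisation problem of Theorem~\ref{thm:postConSmallGeneral}. That theorem delivers posterior consistency (indeed with a positive algebraic rate) as soon as the maximiser $\kappa^{\star}$ is strictly positive; since the eight constraints are strict inequalities depending continuously on $(\kappa,p,\eta,\theta)$, it suffices to exhibit a single feasible point at $\kappa=0$, for then the same point stays feasible for all small $\kappa>0$ and hence $\kappa^{\star}>0$. I would note first that at $\kappa=0$ the two constraints carrying the small-ball exponent, $\rho\kappa<e\theta$ and $\rho\kappa<1-2\kappa$, hold as soon as $\theta>0$; thus $\rho$ drops out entirely and the only role of the prior near $\truth$ is that $\prior(B^{1}_{\epsilon}(\truth))>0$ for each fixed $\epsilon$, which is exactly the hypothesis $\truth\in\text{supp}_{\mathcal H^{1}}(\prior)$. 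So the whole corollary reduces to a finite-dimensional feasibility computation.

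Setting $\kappa=0$ and letting $\theta\downarrow0^{+}$, I would collapse the remaining constraints. Writing $q=p/(p-1)$, $c:=1-\lambda$, $u:=(1-\lambda)q=cq$ and $\beta:=\frac12+\eta$, and using $p/q=p-1=1/(q-1)$, the first and sixth constraints both become $\eta(p-1)<\frac12$, equivalently $\beta<\frac{u}{2c}$; the fifth, $\lambda p<2$, is equivalent to $u>\frac{2c}{2-\lambda}$; the seventh is $u<e$; the second and third hold for any sufficiently small $\theta>0$; and, since $\theta e<1$ for such $\theta$, the maximum in the eighth constraint equals $1$, so it reads $(1-\beta)\bigl(1+\frac{1}{e-u}\bigr)<1$, i.e. $\beta>\frac{1}{e-u+1}$. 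Hence it remains to find $u\in\bigl(\frac{2c}{2-\lambda},\,e\bigr)$ admitting a $\beta\in\bigl[\frac12,\frac{u}{2c}\bigr)$ with $\beta>\frac{1}{e-u+1}$.

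Such a $\beta$ exists precisely when $\frac{1}{e-u+1}<\frac{u}{2c}$ together with $u>c$ (the latter guaranteeing $\frac{u}{2c}>\frac12$, so the admissible $\beta$-interval is nonempty), and rearranging the former gives the scalar condition $k(u):=u^{2}-(e+1)u+2(1-\lambda)<0$. The problem is thus to decide whether the upward parabola $k$ dips below zero somewhere on $\bigl(\frac{2c}{2-\lambda},e\bigr)$, and this is exactly where the two cases of the statement appear. Its vertex sits at $u=\frac{e+1}{2}$. If $e>1$ the vertex lies to the left of the right endpoint $e$, so the minimum over the interval is $k\!\left(\frac{e+1}{2}\right)=2(1-\lambda)-\frac{(e+1)^{2}}{4}$, which is negative exactly when $(e+1)^{2}>8(1-\lambda)$, i.e. $e>-1+2\sqrt{2}\sqrt{1-\lambda}$; this is the operative regime when $0<\lambda\le\frac12$, where the threshold itself is $\ge1$. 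If instead $e\le1$ the vertex lies at or beyond $e$, $k$ is decreasing on the interval, and its infimum is $k(e)=2(1-\lambda)-e$, negative exactly when $e>2-2\lambda$; this is the operative regime for $\frac12<\lambda<1$. In either case the stated hypothesis on $e$ produces an admissible $u$ (one verifies $\frac{2c}{2-\lambda}<u$ and the side conditions $u>c$, $e>u$ hold there), completing the feasibility check.

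The main obstacle is the bookkeeping of this reduction rather than any analytic difficulty: one must correctly collapse the eight coupled constraints at $\kappa=0$, recognise that sending $\theta\downarrow0$ trivialises the maximum in the eighth constraint, and then handle the endpoint in the one-dimensional minimisation of $k$. It is precisely the competition between the interior vertex $\frac{e+1}{2}$ and the forced endpoint $u<e$ that splits the argument into the two thresholds $-1+2\sqrt{2}\sqrt{1-\lambda}$ and $2-2\lambda$ and accounts for the crossover at $\lambda=\frac12$. All remaining steps—verifying $\lambda p<2$, $u<e$, and that $\beta$ can be placed in $\bigl[\frac12,\frac{u}{2c}\bigr)$—are routine interval inequalities, and no input beyond Theorem~\ref{thm:postConSmallGeneral} is needed.
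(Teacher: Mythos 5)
Your proposal is correct in substance and, at its core, performs the same reduction as the paper: both arguments run the proof of Theorem \ref{thm:postConSmallGeneral} at $\kappa=0$, observe that the constraints carrying $\kappa$ and $\rho$ (namely (\ref{eq:postConSmallRate3}), (\ref{eq:postConSmallrate4}), (\ref{eq:postConSmallRate6})) become vacuous so that only positivity of $\prior\bigl(B^{1}_{\epsilon}(\truth)\bigr)$ is needed, and then solve the remaining feasibility problem in $(p,\eta,\theta)$. Where you genuinely differ is in how that feasibility problem is resolved. The paper maximises $\eta$ (taking $\eta=\frac{1}{2(p-1)}-\epsilon$) and then maximises $\theta$ (so that the right-hand side of (\ref{eq:postConSmallRate8}) becomes $\max\bigl(1,\frac{e}{2-2\lambda}\bigr)$), reducing everything to the single inequality (\ref{eq:postConNoRate1}) in $p$, whose case analysis it declares tedious and omits. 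You instead send $\theta\downarrow0$ (so that the max equals $1$), change variables to $u=(1-\lambda)q$ and $\beta=\frac12+\eta$, and reduce feasibility to the quadratic condition $k(u)=u^{2}-(e+1)u+2(1-\lambda)<0$ on $\bigl(\frac{2(1-\lambda)}{2-\lambda},e\bigr)$, which you settle completely by the vertex/endpoint dichotomy at $e=1$. This bookkeeping is cleaner and, unlike the paper, actually exhibits the algebra that produces the two thresholds; note also that your smaller right-hand side in (\ref{eq:postConSmallRate8}) could a priori have lost ground relative to the paper's $\max\bigl(1,\frac{e}{2-2\lambda}\bigr)$, but it does not, since your analysis recovers exactly the stated conditions.

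Two small repairs are needed. First, your opening framing --- exhibit a feasible point at $\kappa=0$, perturb to small $\kappa>0$, conclude $\kappa^{\star}>0$, and apply Theorem \ref{thm:postConSmallGeneral} as a black box --- is not available here: the corollary assumes no small ball asymptotic, and a prior with $\truth\in\text{supp}_{\mathcal{H}^{1}}(\prior)$ need not satisfy $\log\prior(B_{\epsilon}^{1}(\truth))\succsim-\epsilon^{-\smallBallAsymp}$ for any finite $\smallBallAsymp$, so the theorem's hypotheses (and the constraints involving $\smallBallAsymp$) are simply unavailable for $\kappa>0$. The valid justification is the one you state immediately afterwards, and the one the paper uses: rerun the \emph{proof} of Theorem \ref{thm:postConSmallGeneral} at $\kappa=0$, where the support condition replaces the small ball bound. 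Second, your alignment of the dichotomy $e>1$ versus $e\le1$ with the two cases $\lambda\le\frac12$ versus $\lambda>\frac12$ leaves one subcase implicit: $\lambda>\frac12$ with $e>1$, where your criterion demands $e>-1+2\sqrt{2}\sqrt{1-\lambda}$ rather than $e>2-2\lambda$. This does follow from the hypothesis, because $2-2\lambda-\bigl(-1+2\sqrt{2(1-\lambda)}\bigr)=\bigl(\sqrt{2(1-\lambda)}-1\bigr)^{2}\ge0$, but this one-line bridge should be stated to make the case analysis complete.
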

\begin{proof}
It follows from the proof of Theorem \ref{thm:postConSmallGeneral}
that we only have to find $\eta$, $\theta\ge0$, $p\ge1$ and $s$
such that the Inequalities (\ref{eq:postConSmallRate1}), (\ref{eq:postConSmallRate2}),
(\ref{eq:postConSmallRate5}), (\ref{eq:postConSmallRate7}) and (\ref{eq:postConSmallRate8})
are satisfied. Choosing $\eta$ as large as Inequality (\ref{eq:postConSmallRate1})
permits, that is $\eta:=\frac{1}{2(p-1)}-\epsilon$, extends the range
of solutions of the other inequalities ((\ref{eq:postConSmallRate2})
and (\ref{eq:postConSmallRate8})) containing $\eta$. Similarly,
choosing $\theta$ as large as (\ref{eq:postConSmallRate2}) permits,
that is $\theta:=\frac{0.5+\eta}{(1-\lambda)q}-\epsilon$, extends
the range of solutions of Inequality (\ref{eq:postConSmallRate8}).
Letting $\epsilon\rightarrow0$ in (\ref{eq:postConSmallRate8}) yields
\begin{eqnarray}
p & \ge 1\nonumber \\
\lambda p & <2\hspace{10cm} \mbox{(\ref{eq:postConSmallRate5})}\nonumber \\
(1-\HilbertScalesInter)q & <e\hspace{10cm} \mbox{(\ref{eq:postConSmallRate7})}\nonumber
\end{eqnarray}\begin{eqnarray}
\frac{(p-2)\left(\frac{p-1}{e(p-1)+(\lambda-1)p}+1\right)}{2(p-1)} & <\max\left(1,\frac{e}{2-2\lambda}\right).\label{eq:postConNoRate1}
\end{eqnarray}

Now it is left to perform a case-by-case analysis. Starting from Inequality
(\ref{eq:postConNoRate1}), the first two cases are $\frac{e}{2-\lambda}<1$
and $\frac{e}{2-\lambda}\ge1$. For these cases we have to treat $e(-1+p)+p(-1+\lambda)<0$
and $e(-1+p)+p(-1+\lambda)\ge0$ separately in order to rearrange
Equation (\ref{eq:postConNoRate1}) to a quadratic inequality in $p$.
The details are tedious but straightforward algebra.\end{proof}
\begin{rem}
We would like to point out that the Remarks \ref{Rem:NoRateCorollary-Gaussian}
and \ref{Rem:NoRateCorollary-LogConcave} are also valid for this
more general Corollary \ref{cor:postConSmallNoRate}.
\end{rem}

\subsubsection{Comparison for the Special Case of Gaussian Priors}

In the special case of jointly diagonalisable prior and noise covariance,
we evaluate the consistency rate in Theorem \ref{thm:postConSmallGeneral}
by comparing it with the optimal rates obtained in \cite{knapik2011bayesian}.
By numerically solving the optimisation problem in Theorem \ref{thm:postConSmallGeneral},
we indicate that our rates are close to the optimal rate. 

In the following we first derive a Gaussian prior and noise for a
regression problem before reformulating our result in this context.
In a second step we reformulate the problem in the notation of \cite{knapik2011bayesian}
and state the corresponding result. We conclude this section by an
actual comparison between the posterior consistency rate obtained
in \cite{knapik2011bayesian} and the results of this paper.

\medskip{}

We suppose that the prior is Gaussian $\prior=\mathcal{N}(0,\mathcal{C}_{0})$
and that the covariance operators $\mathcal{C}_{0}$ of the prior
and $\Gamma$ of the noise are jointly diagonalisable over $\{e_{i}\}$
denoting an orthonormal basis of eigenvectors. Furthermore, we assume
that the eigenvalues $\mu_{j}^{2}$ and $\lambda_{j}^{2}$ of $\mathcal{C}_{0}$
and $\Gamma$ satisfy
\begin{eqnarray}
\mu_{j} & =j^{-t}\label{eq:postConSmallGaussPrior}\\
\lambda_{j} & =j^{-r},\label{eq:postConSmallGaussNoise}
\end{eqnarray}
respectively. The inner product of the Hilbert scale with respect
to $\Gamma$ can now explicitly be written as
\[
\left\langle x,y\right\rangle _{r}=\sum_{j=1}^{\infty}\mu_{j}^{-2r}x_{j}y_{j},\text{ }\left\Vert x\right\Vert _{r}^{2}=\sum_{j=1}^{\infty}\mu_{j}^{-2r}x_{j}^{2}.
\]
Moreover, we remark that Assumption \ref{ass:noiseTraceSumm} is satisfied
with $\sigma_{0}=\frac{1}{2r}$. The covariance operator $\tilde{\mathcal{C}_{0}}$
of $\prior$ on $\mathcal{H}^{s}$ has eigenvalues $\mu_{j}\vert_{\mathcal{H}^{s}}=j^{-t+rs}$
which can be seen by denoting $S^{a}e_{k}:=k^{a}e_{k}$ and calculating
\begin{eqnarray}
\mathbb{E}_{\prior}\left\langle x,u\right\rangle _{\mathcal{H}^{s}}\left\langle x,v\right\rangle _{\mathcal{H}^{s}} & =\mathbb{E}_{\prior}\left\langle x,S^{2sr}u\right\rangle _{\mathcal{H}}\left\langle x,S^{2sr}v\right\rangle _{\mathcal{H}}\label{eq:determineCov} \\[5pt]
 & =\left\langle \priorCov S^{2sr}u,S^{2sr}u\right\rangle _{\mathcal{H}}=\left\langle S^{2sr}\priorCov u,v\right\rangle _{\mathcal{H}^{s}}.\nonumber 
\end{eqnarray}
In order to conclude that $\tilde{\mathcal{C}_{0}}$ is trace-class
on $\mathcal{H}_{s}$, we need to impose that $t>rs+\frac{1}{2}$
. In this case, we know from Example 2 and Proposition 3 in Section
18 of \cite{MR1472736} that the small balls asymptotic 
\[
\log(\prior(B_{\epsilon}^{1}(\truth))\succsim-\epsilon^{-\smallBallAsymp}
\]
is satisfied for $\prior$ with $\rho=\frac{-1}{t-r-1}$. 

For this problem we adapt Theorem \ref{thm:postConSmallGeneral} by
optimising over $s$ in the appropriate range as described in Remark
\ref{rem:postConGeneralS}. Moreover, Fernique's theorem \cite{gaussianMeasureas}
for Gaussian measures motivates us setting $e=2$ and $\rho=\frac{-1}{t-r-1}$
as discussed above.
\begin{cor}
\label{cor:gaussianCase} Let the prior and the observational noise
be specified as in Equation (\ref{eq:postConSmallGaussPrior}) and
(\ref{eq:postConSmallGaussNoise}). If the following optimisation
problem has a solution $\kappa^{\star}>0$, then for any $\kappa<\kappa^{\star}$
the posterior $\posterior$ is consistent in $\mathcal{H}^{1}$ for
$\truth$ in $\mathcal{H}^{s}$ with rate $n^{-\kappa}$. 

\noindent$\text{Maximize }\kappa\text{ with respect to }\kappa,\: p\ge1,\eta,\theta\ge0,1+\frac{1}{2r}<s<\frac{t-\frac{1}{2}}{r}\text{ subject to}$
\begin{eqnarray}
\frac{1}{2}+\eta\frac{p}{q}-\kappa\lambda p\nonumber &<&1-2\kappa \\
 \frac{1}{2}-\HilbertScalesYoungs+(1-\HilbertScalesInter)q\nRadiusSnorm&<&1-2\deltarate \nonumber \\
\frac{1}{t-r-1}\kappa & < & 2\theta\nonumber \\
\frac{1}{t-r-1}\kappa&<&1-2\kappa \label{eq:postConSmallNoiseSmallBall}\\
\left(\eta\frac{p}{q}-\frac{1}{2}\right)\lambda p & < & -\kappa\nonumber \\
\lambda p & < & 2\nonumber \\
(1-\HilbertScalesInter)q & < & 2\nonumber \\
\left(\frac{1}{2}-\HilbertScalesYoungs\right)\left(1+\frac{1}{2-(1-\HilbertScalesInter)q}\right) & < & \max(1-2\kappa,\nRadiusSnorm2)\nonumber 
\end{eqnarray}
where $\lambda:=\frac{s-1-\sigma_{0}}{s-1}$. 
\end{cor}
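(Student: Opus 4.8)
The plan is to obtain the corollary as a direct specialisation of Theorem \ref{thm:postConSmallGeneral} to the jointly diagonalisable Gaussian model of Equations (\ref{eq:postConSmallGaussPrior}) and (\ref{eq:postConSmallGaussNoise}), combined with the optimisation over $s$ advertised in Remark \ref{rem:postConGeneralS}. Concretely, the argument splits into two parts: (i) verifying, for every admissible $s$, that the three hypotheses of Theorem \ref{thm:postConSmallGeneral} hold with the explicit constants $\sigma_0=\frac{1}{2r}$, $e=2$ and $\rho=\frac{1}{t-r-1}$; and (ii) substituting these constants into the abstract optimisation problem and promoting $s$ to a decision variable. Once (i) is in place, (ii) is a purely mechanical replacement of $e$ and $\rho$ in the constraints (\ref{eq:postConSmallRate1})--(\ref{eq:postConSmallRate8}), which reproduces the system of eight inequalities displayed in the statement.

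For (i) I would check the three structural requirements in turn. Assumption \ref{ass:noiseTraceSumm} follows at once from $\lambda_j=j^{-r}$, since $\sum_j j^{-2r\sigma}<\infty$ exactly when $\sigma>\frac{1}{2r}$; this yields $\sigma_0=\frac{1}{2r}$ and hence the lower endpoint $s>1+\frac{1}{2r}$ of the admissible range. For the exponential moment condition $\int\exp(3f\bigl\Vert a\bigr\Vert_s^e)\,d\prior(a)<\infty$, the computation (\ref{eq:determineCov}) identifies the law of $a$ on $\mathcal{H}^s$ as a centred Gaussian with covariance $\tilde{\mathcal{C}}_{0}$ whose eigenvalues decay like $j^{-t+rs}$; this covariance is trace-class on $\mathcal{H}^s$ precisely when $t>rs+\frac{1}{2}$, i.e.\ $s<\frac{t-1/2}{r}$, and in that regime Fernique's theorem \cite{gaussianMeasureas} supplies a constant $c>0$ with $\int\exp(c\bigl\Vert a\bigr\Vert_s^2)\,d\prior<\infty$. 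Taking $e=2$ and $f\le c/3$ then gives the desired finiteness and simultaneously fixes the upper endpoint $s<\frac{t-1/2}{r}$. Finally, the small ball asymptotic $\log\bigl(\prior(B_\epsilon^1(\truth))\bigr)\succsim-\epsilon^{-\rho}$ with $\rho=\frac{1}{t-r-1}$ is the one derived before the statement from Example 2 and Proposition 3 in Section 18 of \cite{MR1472736}, which applies because $\tilde{\mathcal{C}}_{0}$ is trace-class on $\mathcal{H}^s$.

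With these constants fixed, I would invoke Theorem \ref{thm:postConSmallGeneral} for each $s$ in $(1+\frac{1}{2r},\frac{t-1/2}{r})$ and then, following Remark \ref{rem:postConGeneralS}, retain the best attainable rate over $s$. Since $s$ enters the Gaussian constraints only through $\lambda=\frac{s-1-\sigma_0}{s-1}$ (the parameters $e$ and $\rho$ being independent of $s$), adjoining $s$ as an additional variable ranging over this interval turns the family of per-$s$ optimisation problems into the single displayed problem, and any $\kappa<\kappa^\star$ is then admissible as a consistency rate for $\posterior$ in $\mathcal{H}^1$ and $\truth\in\mathcal{H}^s$.

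I expect the substantive analysis to require no new estimate: the delicate balancing of the exponential-moment and small-ball contributions has already been carried out in the proof of Theorem \ref{thm:postConSmallGeneral}, so the present argument is essentially bookkeeping. The only points that warrant care are confirming that the trace-class threshold $t>rs+\frac{1}{2}$ genuinely coincides with the upper end of the $s$-range, and that the exponent $\rho$ imported from \cite{MR1472736} is the one governing the $\mathcal{H}^1$-ball $B_\epsilon^1(\truth)$ rather than a ball in another norm; both are transparent here because the Hilbert scale is built from the noise covariance $\Gamma$ and $\mathcal{H}^1$ is its Cameron--Martin space.
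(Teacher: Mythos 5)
Your proposal is correct and follows essentially the same route as the paper, which establishes the corollary precisely by noting $\sigma_{0}=\frac{1}{2r}$ from the noise eigenvalues, invoking Fernique's theorem to take $e=2$ (with the trace-class condition $t>rs+\frac{1}{2}$ fixing the upper end of the $s$-range), importing the Gaussian small-ball exponent $\rho=\frac{1}{t-r-1}$ from \cite{MR1472736}, and then specialising the optimisation problem of Theorem \ref{thm:postConSmallGeneral} with $s$ adjoined as a decision variable as in Remark \ref{rem:postConGeneralS}. The substantive work is indeed already contained in the proof of Theorem \ref{thm:postConSmallGeneral}, so your bookkeeping argument is complete.
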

We now recast our problem reformulating it in the setting and notation
of \cite{knapik2011bayesian}. Letting $\zeta$ be $H$-valued white
noise, our problem corresponds to recovering $\input$ from 
\[
y=\input+\frac{1}{\sqrt{n}}\obsCov^{\frac{1}{2}}\zeta.
\]
This problem is equivalent to

\begin{equation}
\tilde{Y}=K\input+\frac{1}{\sqrt{n}}\zeta\label{eq:vaartInverseProblem}
\end{equation}
where $K=\Gamma^{\frac{1}{2}}.$ Let $\left\{ f_{n}\right\} $ be
an orthonormal basis of eigenvectors of $\obsCov$ on $H$. In order
to adapt the notation of \cite{knapik2011bayesian}, we write $H_{2}:=H$
and note that $H_{1}$ will be equivalent to the Cameron-Martin space
which takes the form 
\[
H_{1}=S_{\mathcal{H}_{2}}^{r}:=\left\{ v\in\mathcal{H}_{2}\vert v=\sum v_{i}f_{i}\text{ s.t. }\sum v_{i}^{2}i^{2r}<\infty\right\} 
\]
with orthonormal basis $e_{k}=f_{k}/k^{r}$. Moreover, let $K:H_{1}\rightarrow H_{2}$
be defined as

\[
Ke_{k}:=\obsCov^{-\frac{1}{2}}e_{k}=\frac{\lambda_{k}}{k^{r}}f_{k}.
\]
In order to match Assumption 3.1 in \cite{knapik2011bayesian}, we
have to bound the eigenvalues $\kappa_{i}$ of $K^{T}K$ as follows
\[
M^{-1}i^{-p}\leq\kappa_{i}\leq Mi^{-p}.
\]
We determine these eigenvalues by noting that 
\[
\left\langle K^{T}f_{k},e_{j}\right\rangle _{H_{2}}=\left\langle f_{k},Ke_{j}\right\rangle _{H_{2}}=\delta_{jk}\frac{\lambda_{k}}{k^{r}}.
\]
The calculation above yields 
\[
K^{T}Kf_{k}=\left(\frac{\lambda_{k}}{k^{r}}\right)^{2}f_{k}
\]
and thus
\[
\kappa_{k}=\left(\frac{\lambda_{k}}{k^{r}}\right)^{2}\asymp1=n^{0}\Rightarrow p=0.
\]
As in Equation (\ref{eq:determineCov}), we identify the covariance
operator of $\prior$ on $H_{1}$ through its eigenvalues 
\[
\tilde{\lambda}_{k}\asymp k^{-2t+2r}.
\]
By Theorem 4.1 in \cite{knapik2011bayesian} the posterior contraction
rate is given by
\[
n^{-\frac{\alpha\wedge\beta}{1+2\alpha+2p}}
\]
where $-1-2\alpha=-2t+2r$ (compare Equation (3.5) in \cite{knapik2011bayesian})
and $\beta$ is the regularity of the truth. As above, we suppose that $\beta\ge\alpha$ resulting in 
\[
\kappa_{\text{opt}}=\frac{t-r-\frac{1}{2}}{2(t-r)-1}.
\]
In Figure \ref{fig:comparison}, we use numerical optimisation to
compare our rate to the optimal one for $r=1$ with varying $t$. 

Just considering Inequality (\ref{eq:postConSmallNoiseSmallBall})
(essential to our approach since this implies that the Cameron-Martin
term dominates the prior measure c.f. Equation (\ref{eq:postConHilbertScaleDeltaBall}))
yields

\[
\kappa_{\text{Possible}=\frac{t-r-1}{2(t-r)-1}}
\]
which coincides with the rate $\kappa_{\text{Cor}}$ obtained by solving
the optimisation problem in Corollary \ref{cor:gaussianCase}. Thus,
even if we are able to improve our bounds, there is a genuine gap
between our rate and the optimal rate in the case of Gaussian priors.
The reason for this gap is that Theorem \ref{thm:postConSmallGeneral}
is applicable to any prior satisfying the stated regularity and small
ball assumptions. Nevertheless, Figure \ref{fig:comparison} indicates
that the obtained rates are quite close. In contrast, \cite{knapik2011bayesian}
is only applicable to Gaussian priors for which the Gaussian stucture
of the prior and the posterior are explicitly used.

\begin{figure}
\begin{centering}
\includegraphics[width=0.79\textwidth]{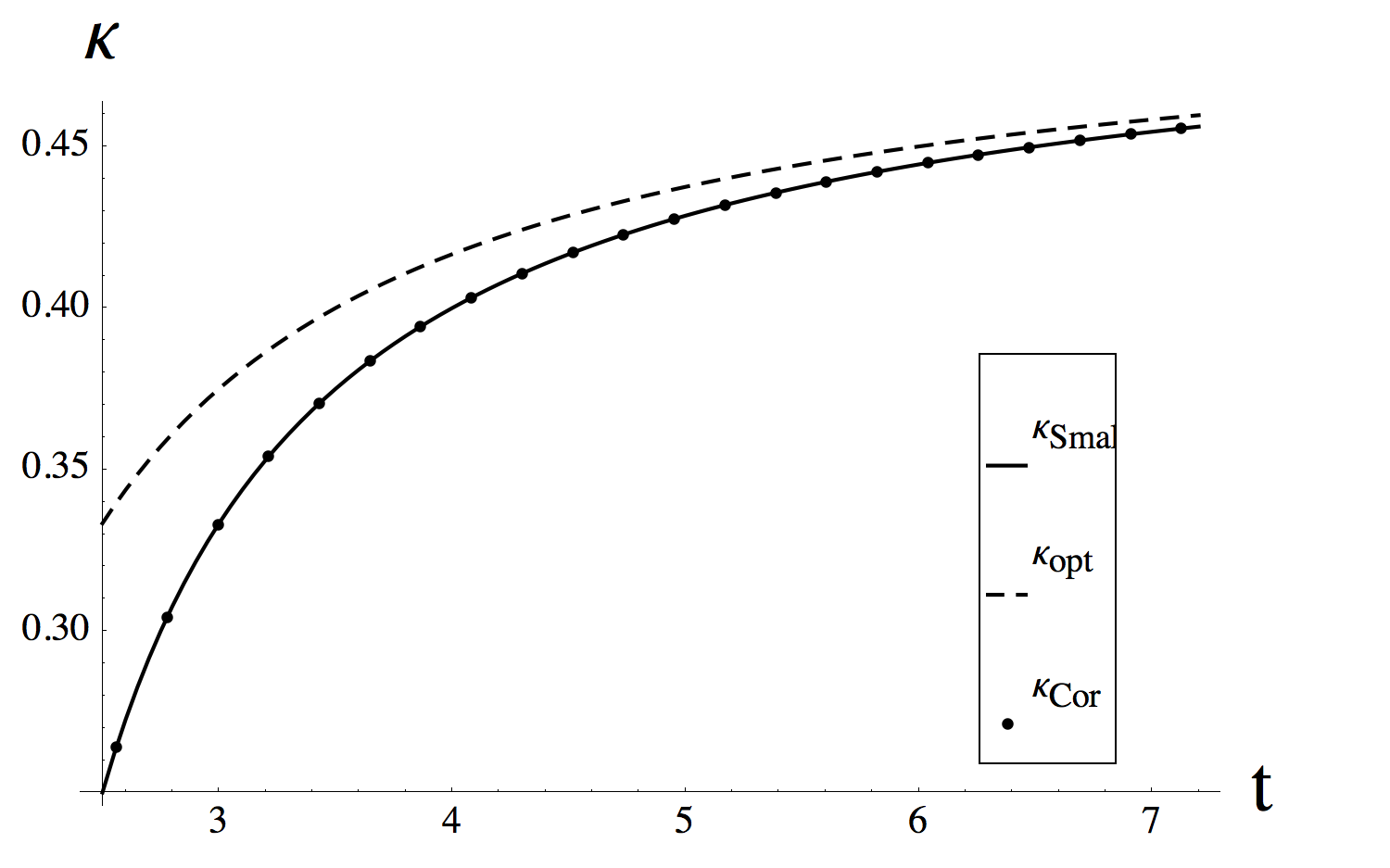}
\par\end{centering}
\caption{\label{fig:comparison}Posterior consistency rate for the Bayesian
regression model with the noise and prior given in Equations (\ref{eq:postConSmallGaussPrior})
and (\ref{eq:postConSmallGaussNoise}). We denote the rate obtained
in \cite{knapik2011bayesian} and the one based on Corollary \ref{cor:gaussianCase}
as $\kappa_{\text{Opt}}$ and $\kappa_{\mbox{Cor}}$,
respectively. We also plot $\kappa_{\mbox{Small Ball}=\frac{t-r-1}{2(t-r)-1}}$
an upper bound on the rate that is obtainable with our method which
is based on the small ball asymptotics of the prior.}
\end{figure}

\subsection{Pointwise Observations in the Large Data Limit\label{sub:Point-wise-Observations-in}}

We consider the following non-parametric Bayesian regression problem

\begin{equation}
\data_{i}=\input(x_{i})+\noise_{i}\quad i:=1,\dots,n\label{eq:largeDataModel}
\end{equation}
with $\input:D\rightarrow\mathbb{R}$, $D$ a bounded domain and $\xi_{i}\overset{\text{i.i.d.}}{\sim}\mathcal{N}(0,\sigma^{2})$.
We assume that a prior $\prior$ is supported on $C(D,\mathbb{R})$
resulting in a posterior of the form
\[
\frac{d\posterior}{d\prior}\propto\exp\left(-\sum_{i=1}^{n}\frac{(\input(x_{i})-y_{i})^{2}}{2\sigma^{2}}\right).
\]
Subsequently, we will prove posterior consistency for this problem
for the case $D=[0,1]$. However, the same reasoning applies to any
bounded domain $D\subseteq\mathbb{R}^{d}$ but the actual posterior
consistency rate depends on $d$. 

As in the previous section, we suppose that the data $\data_{i}$
in Equation (\ref{eq:largeDataModel}) is generated for a fixed 'truth'
$\truth$. Hence
\begin{eqnarray}
y_{i} & = & \truth(x_{i})+\noise_{i}\nonumber \\
\frac{d\mu^{y_{1:n}}}{d\prior} & \propto & \exp\left(-\sum_{i=1}^{n}\frac{\left(\input(x_{i})-\truth(x_{i})\right){}^{2}+2\left(\input(x_{i})-\truth(x_{i})\right)\noise_{i}}{2\sigma^{2}}\right).\label{eq:postConLargeData}
\end{eqnarray}
In this setup posterior consistency depends on the properties of the
prior as well as on the sequence $\left\{ x_{i}\right\} _{i\in\mathbb{N}}$.
In the following, we discuss appropriate assumptions on both giving
rise to Theorem \ref{thm:largeData}. Moreover, we relate this result
with its assumptions to the literature. 
\begin{assumption}
\label{ass:postConDataPriorAss}There exist $\beta\in(0,1]$ and $L>0$
such that
\[
\left\Vert \input\right\Vert _{\beta}\leq L\text{ and }\bigl\Vert\input\bigr\Vert_{\infty}\leq L\quad\prior\text{-a.s.}.
\]

\end{assumption}
As $n$ increases, we gain more and more information about the function
$\input$. In particular, if $\left\{ x_{i}\right\} _{i\in\mathbb{N}}$
is dense in $[0,1]$ it is even possible to reconstruct the value
of $\truth(x)$ from $y_{i}$. More precisely, let $x\in[0,1]$ be
arbitrary, then there are $\left|x_{n_{j}}-x\right|\leq\frac{1}{j^{\frac{2}{\beta}}}$
such that 
\[
\input(x)=\lim_{J\rightarrow\infty}\frac{1}{J}\sum_{j=1}^{J}\input(x_{n_{j}}).
\]

However, we will see that this is not sufficient for posterior consistency.
In fact, we will give an example of posterior inconsistency for this
case. So far, the problem of posterior consistency for this type of
regression problems has mainly been investigated for random evaluation
points $x_{i}$ which are known as random covariates. Appropriate
results of this type can be found in\textbf{ \cite{shen2001rates,walker2004new}}.
An exception is \cite{choi2007posterior} where posterior consistency
without a rate with respect to the $L^{1}$-norm for deterministic
$x_{i}$ is shown. This result is obtained under the following assumption.
\begin{assumption}
\cite{choi2007posterior}\label{ass:choi} Suppose that there exists
a constant $K$ such that whenever $b-a\ge\frac{1}{Kn}$ for $0<a<b<1$
there is at least one $i$ such that $x_{i}\in(a,b)$. 
\end{assumption}
The above condition guarantees that the number of observations in
each interval satisfies a lower bound proportional to its size. More
precisely, an interval of size $n^{-\kappa}$ has at least order $n^{1-\kappa}$
for $n$ being large enough. This can be seen by chopping the interval
into intervals of size $\frac{1}{Kn}$ . In contrast to \cite{choi2007posterior},
we are also able to obtain a posterior consistency rate under this
assumption. Posterior consistency without a rate can be concluded
under the following weaker Assumption.
\begin{assumption}
\label{ass:equally spaced} We suppose that for $\left\{ x_{i}\right\} _{i\in\mathbb{N}}$
there exists a $K>0$ such that for any $a<b\in[0,1]$ there is an
$N(a,b)$ such that
\[
F_{n}(b)-F_{n}(a)\ge K(b-a)\quad\forall n>N(a,b)
\]
where $F_{n}$ denotes the empirical distribution of $\{x_{i}\}_{i=1}^{n}.$ \end{assumption}
\begin{thm}
\label{thm:largeData}Suppose that the Assumptions \ref{ass:postConDataPriorAss}
and \ref{ass:equally spaced} are satisfied. Then $\mu^{y_{1:n}}$
is posterior consistent with respect to the $L^{\infty}$-norm for
any $a^{\dagger}\in\text{supp}_{C^{\beta}}(\prior)$. Moreover, if
Assumption \ref{ass:choi} and the small ball asymptotic 
\[
\log(\prior(B_{\epsilon}^{L^{\infty}}(\truth))\succsim-\epsilon^{-\smallBallAsymp}
\]
are satisfied, then $\mu^{y_{1:n}}$ is posterior consistent with
respect to the $L^{\infty}$-norm with any rate $n^{-\kappa}$ and
\[
\kappa<\min\left\{ \frac{1}{2(2+\frac{1}{\beta})},\frac{2\beta}{(2\beta+1)(2+\rho)}\right\} .
\]

\end{thm}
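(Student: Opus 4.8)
The plan is to follow the ratio strategy of the proof of Theorem \ref{thm:NoTail}: it suffices to produce a sequence of noise sets $S_{n}$ with $\mathbb{P}_{\noise}(S_{n})\to1$ on which the posterior odds of leaving the ball vanish, i.e.
\[
\sup_{\xi\in S_{n}}\frac{\mu^{y_{1:n}}\bigl(B^{L^{\infty}}_{\epsilon_{n}}(\truth)^{c}\bigr)}{\mu^{y_{1:n}}\bigl(B^{L^{\infty}}_{\epsilon_{n}}(\truth)\bigr)}\to0,
\]
with $\epsilon_{n}\asymp n^{-\kappa}$ in the rate case and $\epsilon_{n}\equiv\epsilon$ fixed in the no-rate case. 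Writing $g=a-\truth$, the exponent in \eqref{eq:postConLargeData} is $-\frac{1}{2\sigma^{2}}\bigl(\sum_{i}g(x_{i})^{2}+2\sum_{i}g(x_{i})\noise_{i}\bigr)$, so everything reduces to bounding the empirical quadratic term $G(a):=\sum_{i}g(x_{i})^{2}$ from below away from the truth and from above near it, and to controlling the Gaussian linear term $W(a):=\sum_{i}g(x_{i})\noise_{i}$ uniformly in $a$.

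The first ingredient is a deterministic design lemma converting a sup-norm separation into an empirical-$L^{2}$ separation. Since $\truth\in\text{supp}_{C^{\beta}}(\prior)$ while $\{\Vert a\Vert_{\beta}\le L\}$ carries full mass and is $C^{\beta}$-closed, one has $\Vert g\Vert_{\beta}\le2L$; hence if $|g(x_{0})|\ge\varrho$ then $|g|\ge\varrho/2$ on the interval $\{|x-x_{0}|\le(\varrho/(4L))^{1/\beta}\}$ of length $\asymp\varrho^{1/\beta}$. Under Assumption \ref{ass:choi} this interval contains $\gtrsim n\varrho^{1/\beta}$ design points (chop it into pieces of length $1/(Kn)$), so for $\Vert g\Vert_{\infty}\ge\varrho$,
\[
G(a)\gtrsim n\,\varrho^{1/\beta}\cdot\varrho^{2}=n\,\Vert g\Vert_{\infty}^{2+1/\beta}.
\]
In the no-rate case $\varrho=\epsilon$ is fixed, the bump interval has fixed length, and the weaker Assumption \ref{ass:equally spaced} ($F_{n}(b)-F_{n}(a)\ge K(b-a)$ eventually) already yields $G(a)\gtrsim n$ outside the fixed ball.

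The second and hardest ingredient is the uniform control of $W$ on a high-probability event. Because the support lies in the Hölder ball $\{\Vert g\Vert_{\beta}\le2L\}$, which is totally bounded in $C[0,1]$ with metric entropy $\log N(\delta,\Vert\cdot\Vert_{\infty})\asymp\delta^{-1/\beta}$, the process $a\mapsto W(a)$ is sub-Gaussian for $d(a,a')=\sigma\bigl(\sum_{i}(g(x_{i})-g'(x_{i}))^{2}\bigr)^{1/2}\le\sigma\sqrt{n}\,\Vert g-g'\Vert_{\infty}$, where $g'=a'-\truth$. A chaining/maximal-inequality bound, with base-point variances $\sigma^{2}G\le\sigma^{2}n\Vert g\Vert_{\infty}^{2}$ and increments controlled by $\Vert g-g'\Vert_{\infty}$, yields $\sup_{a}|W(a)|\le K_{n}\sqrt{n}$ with $K_{n}$ growing subpolynomially, and I would take $S_{n}=\{\sup_{a}|W(a)|\le K_{n}\sqrt{n}\}\cap\{\sum_{i}\noise_{i}^{2}\le2\sigma^{2}n\}$. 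This is the main obstacle: the entropy integral converges cleanly only for $\beta>\frac{1}{2}$, and for smaller $\beta$ one must localise the chaining to the relevant sup-norm shells and absorb the resulting factors into $K_{n}$, which is the delicate part.

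Finally I would assemble the estimate. For the denominator I lower-bound by the prior mass of a small ball $B_{r_{n}}$ with $r_{n}\asymp n^{-1/(2+\smallBallAsymp)}$, on which $G\le nr_{n}^{2}$ and, by the small-ball hypothesis, $\prior(B_{r_{n}})\ge\exp(-c\,r_{n}^{-\smallBallAsymp})$; the choice of $r_{n}$ balances $nr_{n}^{2}\asymp r_{n}^{-\smallBallAsymp}\asymp n^{\smallBallAsymp/(2+\smallBallAsymp)}$, giving a denominator $\gtrsim\exp(-Cn^{\smallBallAsymp/(2+\smallBallAsymp)})$ after absorbing the noise. For the numerator a peeling over sup-norm shells together with the design lemma gives $G\gtrsim n\epsilon_{n}^{2+1/\beta}=n^{1-\kappa(2+1/\beta)}$. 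The ratio tends to zero once this term dominates both the noise $K_{n}\sqrt{n}$ and the denominator exponent, i.e.
\[
1-\kappa\bigl(2+\tfrac{1}{\beta}\bigr)>\tfrac{1}{2}\qquad\text{and}\qquad 1-\kappa\bigl(2+\tfrac{1}{\beta}\bigr)>\frac{\smallBallAsymp}{2+\smallBallAsymp},
\]
which rearrange to $\kappa<\frac{1}{2(2+1/\beta)}$ and $\kappa<\frac{2\beta}{(2\beta+1)(2+\smallBallAsymp)}$. Letting the slack in $K_{n}$ vanish yields the stated rate, while the constant-$\epsilon$ versions of the same estimates (with $G\gtrsim n$ and a fixed positive small-ball factor) give consistency under Assumptions \ref{ass:postConDataPriorAss} and \ref{ass:equally spaced} alone.
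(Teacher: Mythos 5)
Your route is, in outline, the same as the paper's: the ratio criterion borrowed from Theorem \ref{thm:NoTail}, the H\"older design lemma converting a sup-norm separation $\varrho$ into the empirical bound $\sum_{i}g(x_{i})^{2}\gtrsim n\varrho^{2+1/\beta}$ under Assumption \ref{ass:choi} (respectively $\gtrsim n$ for fixed $\varrho$ under Assumption \ref{ass:equally spaced}), and the same final arithmetic: your two conditions $1-\kappa\bigl(2+\tfrac{1}{\beta}\bigr)>\tfrac12$ and $1-\kappa\bigl(2+\tfrac{1}{\beta}\bigr)>\tfrac{\rho}{2+\rho}$ are exactly what the paper's proof reduces to and reproduce the stated rate. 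Your denominator ball $r_{n}\asymp n^{-1/(2+\rho)}$ differs cosmetically from the paper's choice $n^{-\tilde\kappa}$ with $\tilde{\kappa}=\kappa(1+\tfrac{1}{2\beta})+\gamma$, but gives the same constraint.

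The genuine divergence, and the gap, is the uniform control of the Gaussian term $W(a)=\sum_{i}g(x_{i})\noise_{i}$. The paper collapses this term to $S\eta$ with a single ``generic'' scalar Gaussian $\eta$ restricted to $\{|\eta|\le n^{\gamma}\}$; since $\eta=W(a)/S(a)$ depends on $a$, that step silently asserts a uniform-in-$a$ bound on the normalised process which the paper never proves, and you have correctly identified this as the crux. However, your substitute event $S_{n}=\{\sup_{a}|W(a)|\le K_{n}\sqrt{n}\}$ with subpolynomial $K_{n}$ only has probability tending to one when $\beta>\tfrac12$: for $\beta<\tfrac12$ the flat $\sqrt{n}$ bound is not merely hard to prove by chaining, it is false over the H\"older ball. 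Indeed, take $g$ to be a sum of disjointly supported bumps of width $h\asymp n^{-1}$ and height $\delta\asymp Lh^{\beta}$ centred near the design points, with signs matched to $\mathrm{sign}(\noise_{i})$; then $\Vert g\Vert_{\beta}\lesssim L$ while
\[
W\gtrsim n\delta\asymp n^{1-\beta}\gg K_{n}\sqrt{n},\qquad\sum_{i}g(x_{i})^{2}\lesssim n^{1-2\beta},
\]
so under the theorem's sole assumption (support contained in a H\"older ball) your event can have vanishing probability. Consequently the ``localisation to shells'' you defer is not a refinement but the essential step for half of the parameter range $\beta\in(0,1]$ covered by the statement, and it cannot be carried out by entropy bounds alone: one must either exploit the empirical-$L^{2}$ localisation together with peeling over the shells $\{G\asymp 2^{j}G_{\min}\}$, or the smallness of the prior mass of such sign-matched perturbations, in the style of testing arguments. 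As written, your proposal proves the theorem for $\beta\in(\tfrac12,1]$ only. In fairness, the paper's own proof conceals exactly the same difficulty inside the ``generic $\eta$'' notation, so for $\beta>\tfrac12$ your write-up is the more rigorous of the two; but neither it nor your sketch closes the case $\beta\le\tfrac12$.
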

\begin{proof}

As in Theorem \ref{thm:NoTail}, posterior consistency is implied
by
\[
\sup_{(\xi_{1},\dots,\xi_{n})\in S_{n}}\frac{\mu^{y_{1:n}}(B_{\epsilon n^{-\kappa}}^{L^{\infty}}(\truth)^{c})}{\mu^{y_{1:n}}\left(B_{\epsilon n^{-\kappa}}^{L^{\infty}}(\truth)\right)}\rightarrow0\text{ for }n\rightarrow\infty
\]
for increasing sets $S_{n}$ such that $\noiseMeasure$$\left((\noise_{1},\dots,\noise_{n})\in S_{n}\right)\rightarrow1$.
For notational convenience we write $h:=\input-\truth,$ $S:=\sqrt{\sum_{i=1}^{n}h(x_{i})^{2}}$
and we denote by $\eta$ a generic $\mathcal{N}(0,\sigma^{2})$ random
variable. This allows us to rewrite the posterior in Equation (\ref{eq:postConLargeData})
as 
\begin{equation}
\frac{d\mu^{y_{1:n}}}{d\prior}\propto Z(n,\eta)\exp\left(-\frac{1}{2\sigma^{2}}(S^{2}+2S\eta)\right).\label{eq:thmPostConLargeData}
\end{equation}
Since $y_{1:n}$ is finite dimensional, it is easy to see that $Z(n,\eta)$
is bounded from above and below. Again, fixing $\gamma>0$, we only
need to consider $\eta\in B_{n^{\gamma}}(0)$. Thus, for $0<l_{\epsilon}<1$
we have a lower bound on 
\[
\mu^{y_{1:n}}\left(B_{\epsilon}^{L^{\infty}}\left(\truth\right)\right)\ge Z(n,\eta)\exp\left(-n\frac{l_{\epsilon}^{2}\epsilon^{2}}{2\sigma^{2}}-\frac{l_{\epsilon}\epsilon n{}^{\frac{1}{2}}n^{\gamma}}{\sigma^{2}}\right)\prior\left(B_{l_{\epsilon}\epsilon}^{L^{\infty}}\left(\truth\right)\right).
\]
In order to derive an upper bound on $\mu^{y_{1:n}}(B_{\epsilon}^{L^{\infty}}(\truth)^{c})$,
let $\input\in B_{\epsilon}^{L^{\infty}}(\truth)^{c}$ be chosen arbitrarily
and notice that $f(S)=-S^{2}+Sn^{\gamma}$ is decreasing for $S>n^{\gamma}$.
The upper bound on $\mu^{y_{1:n}}(B_{\epsilon}^{L^{\infty}}(\truth)^{c})$
therefore boils down to a lower bound on $S$ that is larger than
$n^{\gamma}.$ In fact, there is $\hat{x}$ such that $\left|\truth(\hat{x})-\input(\hat{x})\right|\ge\epsilon$.
Applying H\"older continuity yields 
\[
\left|\truth(x)-\input(x)\right|\ge\epsilon/2\quad\text{for }x\in\left(\hat{x}-\Delta x,\hat{x}+\Delta x\right]
\]
for $\Delta x=\left(\frac{\epsilon}{4L}\right)^{\frac{1}{\beta}}$.
Let $I$ be the following index set 
\[
I=\left\{ i\vert x_{i}\in\left(\hat{x}-\Delta x,\hat{x}+\Delta x\right]\right\} .
\]
For $n$ larger than $N_{\epsilon}=\max\left\{ N(i\Delta x,(i+1)\Delta x)\vert i=0\dots\bigl\lfloor1/\Delta x\bigr\rfloor-1\right\} $
it follows that 
\[
K\frac{1}{2}\Delta x\le F_{n}\left(\hat{x}+\Delta x\right)-F_{n}\left(\hat{x}-\Delta x\right)=\frac{\left|I\right|}{n}.
\]
If we only consider $x_{i}$ with $i\in I$, we obtain that
\[
S\ge\sqrt{\frac{\epsilon^{2}}{4}nK\frac{\Delta x}{2}}
\]
which gives rise to the following upper bound

\begin{equation}
\mu^{y_{1:n}}\left(B_{\epsilon}^{L^{\infty}}\left(\truth\right){}^{c}\right)\leq Z(n,\noise)\exp\left[-\frac{n\epsilon^{2}K\Delta x}{8\sigma^{2}}+\left(\frac{K}{2}\Delta x\right){}^{\frac{1}{2}}\frac{\epsilon}{4\sigma^{2}}n^{\frac{1}{2}+\gamma}\right].\label{eq:thmLargeDataPostConUpper}
\end{equation}
By choosing $l_{\epsilon}$ small enough, we also know that 
\[
\frac{\mu^{y_{1:n}}\left(B_{\epsilon}^{L^{\infty}}(\truth)^{c}\right)}{\mu^{y_{1:n}}\left(B_{\epsilon}^{L^{\infty}}(\truth)\right)}\rightarrow0\quad\text{as }n\rightarrow\infty.
\]
In order to obtain a rate of posterior consistency, we use $\tilde{\kappa}>\kappa$
and hence 
\begin{eqnarray}
\hspace{-2.5cm}\posterior\left(B_{n^{-\kappa}}^{L^{\infty}}\left(\truth\right)\right)\ge\posterior\left(B_{n^{-\tilde{\kappa}}}^{L^{\infty}}\left(\truth\right)\right)\ge Z(n,\noise)\exp\left[-\frac{1}{2\sigma^{2}}n^{1-2\tilde{\kappa}}-\frac{1}{2\sigma^{2}}n^{\frac{1-\tilde{\kappa}}{2}+\gamma}-cn^{\tilde{\kappa}\rho}\right].\label{eq:postConLargeLower}
\end{eqnarray}
Thus, Equation (\ref{eq:thmLargeDataPostConUpper}) implies that 
\begin{equation}
\hspace{-1.5cm}\mu^{y_{1:n}}\left(B_{n^{-\kappa}}^{L^{\infty}}\left(\truth\right){}^{c}\right)\leq Z(n,\noise)\exp\left(-\frac{K}{8\sigma^{2}(4L)^{\frac{1}{\beta}}}n^{1-(2+\frac{1}{\beta})\kappa}+\frac{K^{\frac{1}{2}}n^{\frac{1}{2}-\kappa(1+\frac{1}{2\beta})}}{2\sigma^{2}\sqrt{2}(4L)^{\frac{1}{\beta}}}\right).\label{eq:postConLargeUpper}
\end{equation}
The first term in the exponential in Equation (\ref{eq:postConLargeLower})
is dominant over the corresponding term in Equation (\ref{eq:postConLargeUpper})
by choosing 
\[
\tilde{\kappa}:=\kappa(1+\frac{1}{2\beta})+\gamma.
\]
Moreover, the first\textbf{ }term in the Equations (\ref{eq:postConLargeLower})
and (\ref{eq:postConLargeUpper}) is dominant over the other terms
respectively if 
\begin{eqnarray*}
1-2\tilde{\kappa} & > & \tilde{\kappa}\rho\\
1-2\tilde{\kappa} & > & \frac{1-\tilde{\kappa}}{2}+\gamma\\
1-\left(2+\frac{1}{\beta}\right)\kappa & > & \frac{1}{2}-\kappa\left(1+\frac{1}{2\beta}\right).
\end{eqnarray*}
These three inequalities are respectively implied by
\begin{eqnarray*}
\frac{1-\rho\gamma-2\gamma}{\left(1+\frac{1}{2\beta}\right)\left(2+\rho\right)} & > & \kappa\\
\frac{2-10\gamma}{6\left(1+\frac{1}{2\beta}\right)} & > & \kappa\\
\text{}\frac{1}{2+\frac{1}{\beta}} & > & \kappa.
\end{eqnarray*}
Choosing $\gamma$ small enough, we see that $\posterior$ is consistent
in $L^{\infty}$ with any rate
\[
\kappa<\min\left\{ \frac{1}{3(1+\frac{1}{2\beta})},\frac{1}{\left(1+\frac{1}{2\beta}\right)(2+\rho)}\right\} .
\]
\end{proof} 
The assumptions in the theorem above can be justified because a slight
violation leads to the example of posterior inconsistency in the next
section.

\subsubsection{Example of Posterior Inconsistency}\label{sub:InconsistencyExample}
In this section, we construct a counterexample to illustrate that
despite the strong Assumption \ref{ass:postConDataPriorAss} it is
not sufficient for $\{x_{i}\}_{i=1}^{n}$ to be dense in order to
establish posterior consistency. Given such a sequence it is always
possible to extract a subsequence satisfying Assumption \ref{ass:equally spaced}.
Even though all the other observations can be viewed as additional,
we will choose a prior so that the posterior sequence is not consistent. 

In the following, we choose the prior concentrated on functions $g$
which are continuous, satisfy $g(\frac{1}{2})=0$ and are linear on
$[0,\frac{1}{2}]$ and\textbf{ }$[\frac{1}{2},1]$. By identifying
$g(0)$ and $g(1)$ with the first and second component respectively
the following two-dimensional example can be extended to the setting
of Equation (\ref{eq:largeDataModel}). This extension is an example
of posterior inconsistency with respect to the $L^{p}$-norm for $1\le p\le\infty$
because any of these norms is equivalent to $\left\Vert (g(0),g(1))\right\Vert $
for an arbitrary norm on $\mathbb{R}^{2}$.
\begin{example}
We consider the following prior on $\mathbb{R}^{2}$ 
\[
\prior=M\sum_{k=1}^{\infty}\delta_{\left(\frac{1}{\sqrt{k}},0\right)}\exp\left(-2k^{2}\right)+\delta_{\left(\frac{1}{2\sqrt{k}},1\right)}\exp\left(-k^{2}\right)
\]
and we choose $\truth=(0,0)$ as 'truth'. The data consists of $n$
and $n^{\theta}$ with $0<\theta<1$ being measurements of the form
\[
y_{i}=\truth_{1}+\xi_{i}\text{ and }\tilde{y}_{i}=\truth_{2}+\tilde{\xi}_{i}\text{ with }\xi_{i},\tilde{\xi}_{i}\overset{\text{i.i.d.}}{\sim}\mathcal{N}(0,1),
\]
respectively. Consequently, the posterior takes the form 
\[
\mu^{(y_{1:n},\tilde{y}_{1:n^{\theta}})}\propto\prior(d\input_{1},d\input_{2})\exp\left(-\frac{1}{2}\sum_{j=1}^{n}\left(\input_{1}-\xi_{j}\right){}^{2}-\frac{1}{2}\sum_{j=1}^{n^{\theta}}\left(\input_{2}-\tilde{\xi}_{j}\right){}^{2}\right).
\]
Here, posterior consistency of $\mu^{(y_{1:n},\tilde{y}_{1:n^{\theta}})}$
is equivalent to the statement that for any $K$ there is $l_{n}\uparrow1$
such that 
\[
\mathbb{P}_{\eta}\left(\mu^{(y_{1:n},\tilde{y}_{1:n^{\theta}})}\left(\bigcup_{k=K}^{\infty}(\frac{1}{\sqrt{k}},0)\right)\ge l_{n}\right)\rightarrow1\text{ for }n\rightarrow\infty.
\]
We will not only show that $\mu^{(y_{1:n},\tilde{y}_{1:n^{\theta}})}$
is posterior inconsistent but also that there is $l_{n}\downarrow0$
such that for $A=\bigcup_{k=1}^{\infty}(\frac{1}{k},0)$
\[
\mathbb{P}_{\eta}\left(\mu^{(y_{1:n},\tilde{y}_{1:n^{\theta}})}\left(A\right)\le l_{n}\right)\rightarrow1\text{ for }n\rightarrow\infty.
\]
Because of $\mu^{(y_{1:n},\tilde{y}_{1:n^{\theta}})}(A)+\mu^{(y_{1:n},\tilde{y}_{1:n^{\theta}})}(A^{c})=1$,
we may proceed as in the proofs of the Theorems \ref{thm:NoTail}
and \ref{thm:largeData} and thus it is enough to construct sets of
increasing $\mathbb{P}_{\xi}$-probability such that on these sets
\begin{eqnarray*}
\hspace{-2cm}\frac{\mu^{(y_{1:n},\tilde{y}_{1:n^{\theta}})}(A)}{\mu^{(y_{1:n},\tilde{y}_{1:n^{\theta}})}(A^{c})} & = & \frac{\sum_{k}\exp\left(-\frac{1}{2}\sum_{j=1}^{n}\left(\frac{1}{\sqrt{k}}-\xi_{j}\right){}^{2}-\frac{1}{2}\sum_{j=1}^{n^{\theta}}\tilde{\xi}{}^{2}-2k^{2}\right)}{\sum_{k}\exp\left(-\frac{1}{2}\sum_{j=1}^{n}\left(\frac{1}{2\sqrt{k}}-\xi_{j}\right){}^{2}-\frac{1}{2}\sum_{j=1}^{n^{\theta}}\left(\tilde{\xi}_{j}-1\right){}^{2}-k^{2}\right)}\\[5pt]
 & = & \frac{\sum_{k}\exp\left(-\frac{1}{2}\sum_{j=1}^{n}\frac{1}{k}-\frac{2}{k}\xi_{j}-2k^{2}\right)}{\sum_{k}\exp\left(-\frac{1}{2}\sum_{j=1}^{n}\frac{1}{4k}-\frac{1}{\sqrt{k}}\xi_{j}-\frac{1}{2}\sum_{j=1}^{n^{\theta}}1-2\tilde{\xi}_{k}-k^{2}\right)}\rightarrow0.
\end{eqnarray*}
The $\mathbb{P}_{\xi}$-probabilities of $\vert\sum_{j=1}^{n}\xi_{j}\vert>Mn^{\frac{1}{2}+\gamma}$
and $\vert\sum_{j=1}^{n^{\theta}}\tilde{\xi}_{j}\vert>Mn^{\frac{1}{2}+\gamma}$
are exponentially small in $n$. Thus, it is enough to consider
\begin{eqnarray*}
\hspace{-2cm}\frac{\sum_{k}\exp\left(-\frac{1}{2}n\frac{1}{k}-2k^{2}\right)}{\sum_{k}\exp\left(-\frac{1}{2}n\frac{1}{4k}-k^{2}\right)} & =\frac{\sum_{k}^{\sqrt{n}}\exp\left(-\frac{1}{2}n\frac{1}{k}-2k^{2}\right)+\sum_{\sqrt{n}}^{\infty}\exp\left(-\frac{1}{2}n\frac{1}{k}-2k^{2}\right)}{\sum_{k}^{\sqrt{n}}\exp\left(-\frac{1}{2}n\frac{1}{4k}-k^{2}\right)+\sum_{\sqrt{n}}^{\infty}\exp\left(-\frac{1}{2}n\frac{1}{4k}-k^{2}\right)}\\[5pt]
 & \leq\max\left\{ \exp\left(-\frac{3}{4}\sqrt{n}\right),\exp(-n)\right\} \rightarrow0\text{ as }n\rightarrow\infty.
\end{eqnarray*}
Hence we have shown that $\posterior$ is not posterior consistent.
\end{example}
This example relies on the prior having strong correlations between
its two components. Therefore it seems an interesting question how
the assumptions on $\prior$ can be strengthened in order to relax
those on $\left\{ x_{i}\right\} .$

\subsection{\label{sub:Convergence-in-Stronger}Convergence in Stronger Norms}

We conclude this section by showing that interpolation inequalities
can be used in order to strengthen the norm in which the posterior
concentrates. In particular we consider the small noise limit as described
in Section \ref{sub:The-Small-Noise}.

Suppose we know that the posterior concentrates around the truth $\truth$
in the Cameron-Martin norm $\bigl\Vert\cdot\bigr\Vert_{1}$. In order
to show consistency in $\left\Vert \cdot\right\Vert _{r}$, we write
\begin{eqnarray*}
\left\{ \left\Vert \input-\truth\right\Vert _{r}>\epsilon\right\}  & \subset & \left\{ \left\Vert \input-\truth\right\Vert _{1}^{\lambda}\left\Vert \input-\truth\right\Vert _{s}^{1-\lambda}>\epsilon\right\} \\
 & \subset & \left\{ \left\Vert \input-\truth\right\Vert _{1}^{\lambda}>\frac{\epsilon}{K}\right\} \cup\left\{ \left\Vert \input-\truth\right\Vert _{s}^{1-\lambda}>K\right\} .
\end{eqnarray*}
The posterior probability of the first set is small due to the posterior
consistency in $\mathcal{H}^{1}$. The posterior probability of the
second set is small due to the tails of the prior and the posterior.
Obtaining estimates of this type can be done similarly to the steps
subsequent to Equation (\ref{eq:postConHilbertScalesPosteriorMoment})
in the proof of Theorem \ref{thm:postConSmallGeneral}. Using this
technique, it is also possible to apply the results of this section
to the \EIP\ in the next section. A similar technique based on interpolation
inequalities between H�lder spaces applies to the large data limit
and is also used for the \EIP.

\section{Posterior Consistency for an Elliptic Inverse Problem\label{sec:Results-for-Elliptic}}

In Section \ref{sub:IntroElliptic}, we introduced the idea of reducing
posterior consistency of the \EIP\  to that of the \BRP. For this
example we demonstrate our method for both the small noise and the
large data limit. We start by giving the proof for the small noise
limit in detail before sketching the same steps for the large data
limit.\textbf{ }We emphasise the case of posterior consistency in
the small noise limit because of its analogy with convergence results
for regularisation methods.

\subsection{Posterior Consistency in the Small Noise Limit}

Using Theorem \ref{thm:NoTail} from Section \ref{sec:General-Results}
to conclude posterior consistency of the \EIP\  (c.f. Section \ref{sub:IntroElliptic})
is not entirely straightforward because we have to lift the posterior
consistency for the \BRP\  to $C^{2}$. Moreover, we have to find
appropriate assumptions on the prior $\prior$ so that the push forward
prior $p_{\star}\prior$ satisfies the assumptions of the Theorem
\ref{thm:NoTail}. Again, a rate of posterior consistency is obtained
if the prior satisfies appropriate small ball asymptotics. In a second
step we verify those for the so-called uniform priors which are based
on a series expansion with uniformly distributed coefficients, for
details see below or consider \cite{Con4,2012arXiv1207.2411H,stuartchinanotes}. 

In order to formulate assumptions on $\prior$ implying that $p_{\star}\prior$
satisfies the assumptions of Theorem \ref{thm:NoTail}, we assume
for simplicity that $\noise\sim\mathcal{N}(0,(-\Delta_{\text{Dirichlet}})^{-r})$
where $\Delta_{\text{Dirichlet}}$ denotes the Laplacian with homogeneous
Dirichlet conditions. In this case the abstract Hilbert scale $\mathcal{H}^{s}$
(c.f. \ref{sec:Notation}) corresponds to the standard Sobolev
space $H^{rs}$. Thus, the almost sure bounds in Theorem \ref{thm:NoTail}
are implied by the appropriate assumptions on the prior and classical
results from \cite{MR2597943,MR1814364}.

Moreover, the choice $\noise\sim\mathcal{N}(0,(-\Delta_{\text{Dirichlet}})^{-r})$
also implies that Assumption \ref{ass:noiseTraceSumm} holds for $\sigma_{0}=\frac{d}{2r}$.
This is due to the fact that the operator $(-\Delta_{\text{Dirichlet}})^{-r}$
has eigenvalues $\lambda_{k}^{2}$ with $\lambda_{k}\asymp k^{-2r/d}$
(see Section \ref{sec:Notation} for notation) where $d$ denotes
the dimension of the domain $D$. These results are called Weyl asymptotics
and further details can be found in \cite{MR1511670} and \cite{MR1670907}.

The following theorem summarises the consequences for the posterior
consistency of the \EIP.
\begin{thm}
\label{thm:ellipticNoRate}Suppose that the noise is given by $\xi\sim\mathcal{N}(0,(-\Delta_{\text{Dirichlet}})^{-r})$
and that the prior $\prior$ satisfies 
\[
a(x)\ge\lambda>0\,\forall x\in D\text{ and }\bigl\Vert a\bigr\Vert_{C^{\alpha}}\leq\Lambda\qquad\text{for }\prior\text{-a.e.\,}a\quad\text{and for }\alpha>1.
\]
If $\alpha>r+\frac{d}{2}-2$, $\beta+1>r$ and $\truth\in\text{supp}_{C^{\beta}}\prior$,
then the \EIP\  is posterior consistent with respect to the $C^{\tilde{\alpha}}$-norm
for any $\tilde{\alpha}<\alpha$. Additionally, if 
\[
\log(\prior(B_{\epsilon}^{C^{\beta}}(\truth))\succsim-\epsilon^{-\smallBallAsymp}
\]
then the \EIP\  is posterior consistent with respect to the $L^{\infty}$-norm
with rate $n^{-\kappa}$ for any $\kappa$ such that 
\[
\kappa<\left(\frac{\alpha}{\alpha+2+\frac{d}{2}-r}\wedge1\right)\left(\frac{1}{2+\rho}\wedge\frac{\alpha}{2\left(\alpha+1+\frac{d}{2r}\right)}\right).
\]

\end{thm}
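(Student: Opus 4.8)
The plan is to run the reduction of Section \ref{sub:Consistency-through-Stability} with $G=p(\cdot;\cdot)$ and $\mathcal{O}_n=\text{Id}$, turning posterior consistency of the \EIP\ in $L^\infty$ into posterior consistency of the \BRP\ for the push-forward prior $p_\star\prior$ in $C^2$, and then to invoke Theorem \ref{thm:NoTail}. The hypotheses $a(x)\ge\lambda>0$ and $\|a\|_{C^\alpha}\le\Lambda$ hold $\prior$-a.s., so Assumption \ref{ass:ellipticForward} is satisfied a.s. and both stability results are available. In particular Proposition \ref{prop:Richter} gives $\|a-\truth\|_\infty\le M\|a\|_{C^1}\,\|p(\cdot;a)-p(\cdot;\truth)\|_{C^2}\le M\Lambda\,\|p-p^\dagger\|_{C^2}$ with $p^\dagger=p(\cdot;\truth)$, so by the change of variables in Theorem \ref{thm:IndpendentOfDiscription} it suffices to show that the \BRP\ posterior concentrates around $p^\dagger$ in $C^2$.

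Next I would check that $p_\star\prior$ meets the assumptions of Theorem \ref{thm:NoTail}. Because $\xi\sim\mathcal{N}(0,(-\Delta_{\text{Dirichlet}})^{-r})$ we have $\sigma_0=\frac{d}{2r}$ and $\mathcal{H}^s=H^{rs}$, hence $\mathcal{H}^1=H^r$. Applying Proposition \ref{prop:forwardStability} against a fixed reference coefficient converts the a.s. bound $\|a\|_{C^\alpha}\le\Lambda$ into an a.s. bound $\|p\|_{C^{\alpha+1}}\le C$, and the Sobolev embedding $C^{\alpha+1}\hookrightarrow\mathcal{H}^s$ then yields the a.s. bound $\|p\|_s\le U$ that Theorem \ref{thm:NoTail} requires. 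The support and small-ball hypotheses are transported through the forward map: Proposition \ref{prop:forwardStability} gives $\|p-p^\dagger\|_{C^{\beta+1}}\le M\|a-\truth\|_{C^\beta}$, and $\beta+1>r$ ensures $C^{\beta+1}\hookrightarrow\mathcal{H}^1$, so $a\mapsto p$ is Lipschitz from the $C^\beta$-topology on $a$ to the $\mathcal{H}^1$-topology on $p$ near $\truth$. Thus $\truth\in\text{supp}_{C^\beta}\prior$ gives $p^\dagger\in\text{supp}_{\mathcal{H}^1}(p_\star\prior)$, and the assumed $\log(\prior(B_\epsilon^{C^\beta}(\truth)))\succsim-\epsilon^{-\rho}$ pushes forward to $\log(p_\star\prior(B_\epsilon^{\mathcal{H}^1}(p^\dagger)))\succsim-\epsilon^{-\rho}$.

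Granting these, Theorem \ref{thm:NoTail} yields concentration of the \BRP\ posterior in $\mathcal{H}^1=H^r$, with the stated rate once the small-ball bound is imposed. To reach the $C^2$-norm consumed by Proposition \ref{prop:Richter}, I would apply the interpolation device of Section \ref{sub:Convergence-in-Stronger}: the a.s. bound $\|p-p^\dagger\|_{C^{\alpha+1}}\le C$, inherited by both prior and posterior, combines with the $H^r$-concentration through an inequality of the form $\|p-p^\dagger\|_{C^2}\le C\|p-p^\dagger\|_{H^r}^{\theta}\|p-p^\dagger\|_{C^{\alpha+1}}^{1-\theta}$; the exponent $\theta$, determined by matching smoothness across the Sobolev--H\"older dictionary, is exactly the first factor in the stated rate, and composing it with the $\mathcal{H}^1$-rate and the (rate-preserving) Lipschitz bound of Proposition \ref{prop:Richter} produces the claimed $L^\infty$-rate for $a$. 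The rate-free $C^{\tilde\alpha}$-statement then follows by a second H\"older interpolation, this time on $a$ itself: from the $L^\infty$-consistency just obtained and the a.s. bound $\|a-\truth\|_{C^\alpha}\le 2\Lambda$ one gets $\|a-\truth\|_{C^{\tilde\alpha}}\le C\|a-\truth\|_\infty^{1-\tilde\alpha/\alpha}(2\Lambda)^{\tilde\alpha/\alpha}\to0$ in posterior probability for every $\tilde\alpha<\alpha$.

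The main obstacle I anticipate is the index bookkeeping in the middle step rather than any single inequality. One must choose the Hilbert-scale index $s$ large enough that $s>1+\sigma_0$ (so Theorem \ref{thm:NoTail} applies) yet small enough that $C^{\alpha+1}\hookrightarrow\mathcal{H}^s$, while simultaneously arranging that $C^2$ lie between the concentration space $H^r$ and the a.s. regularity space $C^{\alpha+1}$ so that the interpolation has a positive exponent. It is the compatibility of these competing embeddings --- translated faithfully between the $L^2$-based scale $\mathcal{H}^s$ adapted to $\Gamma$ and the H\"older scale in which Propositions \ref{prop:forwardStability} and \ref{prop:Richter} are phrased --- that generates the sharp conditions $\alpha>r+\frac{d}{2}-2$ and $\beta+1>r$ together with the precise rate; getting every threshold to line up is the delicate part.
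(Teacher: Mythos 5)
Your overall architecture is the same as the paper's (stability reduction via Proposition \ref{prop:Richter} and Theorem \ref{thm:IndpendentOfDiscription}, then Theorem \ref{thm:NoTail} for the push-forward prior, then a Besov interpolation bootstrap back to $C^{2}$ and $C^{\tilde\alpha}$), but there is a genuine gap in the middle step, and it sits exactly at the ``index bookkeeping'' you flagged as delicate. You derive the almost-sure regularity of the push-forward prior from the forward stability result, Proposition \ref{prop:forwardStability}, which only gains \emph{one} derivative: it gives $\bigl\Vert \pressure\bigr\Vert_{C^{\alpha+1}}\leq C$ a.s. The paper instead invokes full elliptic (Schauder) regularity --- Theorem 6.19 in \cite{MR1814364} --- which gains \emph{two} derivatives, yielding $\bigl\Vert \pressure\bigr\Vert_{C^{\alpha+2}}\leq K$ a.s. This one-derivative difference is not cosmetic. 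With your bound the admissible Hilbert-scale index is $s=\frac{\alpha+1}{r}$, and the hypothesis $s>1+\sigma_{0}=1+\frac{d}{2r}$ of Theorem \ref{thm:NoTail} becomes $\alpha>r+\frac{d}{2}-1$, which is strictly stronger than the assumption $\alpha>r+\frac{d}{2}-2$ in the statement; your verification therefore fails on the whole range $r+\frac{d}{2}-2<\alpha\leq r+\frac{d}{2}-1$ that the theorem covers. With the paper's bound one gets $s=\frac{\alpha+2}{r}$, and $s>1+\frac{d}{2r}$ is precisely $\alpha>r+\frac{d}{2}-2$.

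The same loss propagates into the rate. The interpolation that carries $H^{r}$-concentration up to $C^{2}$ needs $C^{2}$ to sit between $B_{\infty\infty}^{r-\frac{d}{2}-\gamma}$ (the embedding target of $H^{r}$) and the a.s.\ regularity space. With the endpoint $C^{\alpha+2}$ the exponent is $\theta=\frac{\alpha}{\alpha+2+\frac{d}{2}-r}$, which is exactly the first factor in the claimed rate; with your endpoint $C^{\alpha+1}$ the exponent degrades to $\frac{\alpha-1}{\alpha+1+\frac{d}{2}-r}$, which is strictly smaller, so your argument proves a strictly slower rate than the one stated (and gives nothing at all when $\alpha$ is close to $1$). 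Everything else in your proposal --- the transfer of support and small-ball asymptotics through the Lipschitz forward map (the paper's Lemma \ref{lem:pushforwardSmallBall}), the role of $\beta+1>r$, the rate-preserving composition with Proposition \ref{prop:Richter}, and the final H\"older interpolation on $\input$ itself to reach $C^{\tilde\alpha}$ --- matches the paper's proof. The fix is simply to replace the stability-based a priori bound by the Schauder estimate, so that $\prior$-a.s.\ the pressure lies in $C^{\alpha+2}$, not merely $C^{\alpha+1}$.
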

Before proving Theorem \ref{thm:ellipticNoRate}, we notice that forward
stability results (as Proposition \ref{prop:forwardStability}) can
be used to transfer small ball asymptotics from $\prior$ to $\tilde{\prior}=p_{\star}\prior$.
\begin{lem}
\label{lem:pushforwardSmallBall}If the prior satisfies the small
ball asymptotic 
\[
\log\left(\prior(B_{\epsilon}^{C^{\beta}}(\truth)\right)\succsim-\epsilon^{-\smallBallAsymp},
\]
then 
\[
\log\left(\tilde{\prior}(B_{\epsilon}^{C^{\beta+1}}(p^{\dagger})\right)\succsim-\epsilon^{-\smallBallAsymp}.
\]
\end{lem}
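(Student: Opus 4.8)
The plan is to exploit the forward stability estimate of Proposition \ref{prop:forwardStability}, which shows that the solution operator $\input\mapsto\pressure(\cdot;\input)$ is Lipschitz from $C^{\beta}$ into $C^{\beta+1}$ on the admissible coefficients, and then to simply push a $C^{\beta}$-small ball around $\truth$ forward into a $C^{\beta+1}$-small ball around $p^{\dagger}:=\pressure(\cdot;\truth)$. Since a small ball probability is nothing but the measure of a small ball, and since the push-forward satisfies $\tilde{\prior}(B)=\prior\bigl(p^{-1}(B)\bigr)$ by definition, a set inclusion between balls will translate directly into the desired lower bound on $\log\tilde{\prior}$.

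First I would record the relevant set inclusion. Applying Proposition \ref{prop:forwardStability} with $\alpha=\beta$ to an admissible coefficient $\input$ and the truth $\truth$ gives
\[
\Vert\pressure(\cdot;\input)-p^{\dagger}\Vert_{C^{\beta+1}}\leq M\,\Vert\input-\truth\Vert_{C^{\beta}}.
\]
Hence, whenever $\Vert\input-\truth\Vert_{C^{\beta}}<\epsilon/M$ we have $\Vert\pressure(\cdot;\input)-p^{\dagger}\Vert_{C^{\beta+1}}<\epsilon$, so that
\[
B_{\epsilon/M}^{C^{\beta}}(\truth)\subseteq p^{-1}\left(B_{\epsilon}^{C^{\beta+1}}(p^{\dagger})\right).
\]

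Next I would invoke monotonicity of $\prior$ together with the definition of the push-forward measure to obtain
\[
\tilde{\prior}\left(B_{\epsilon}^{C^{\beta+1}}(p^{\dagger})\right)=\prior\left(p^{-1}\left(B_{\epsilon}^{C^{\beta+1}}(p^{\dagger})\right)\right)\geq\prior\left(B_{\epsilon/M}^{C^{\beta}}(\truth)\right).
\]
Taking logarithms and inserting the assumed small ball asymptotic for $\prior$ yields
\[
\log\tilde{\prior}\left(B_{\epsilon}^{C^{\beta+1}}(p^{\dagger})\right)\geq\log\prior\left(B_{\epsilon/M}^{C^{\beta}}(\truth)\right)\succsim-\left(\frac{\epsilon}{M}\right)^{-\smallBallAsymp}=-M^{\smallBallAsymp}\epsilon^{-\smallBallAsymp},
\]
and since $M^{\smallBallAsymp}$ is a fixed constant it is absorbed into the relation $\succsim$ (c.f. \ref{sec:Notation}), which is exactly the claim.

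The one point requiring care, and the main obstacle, is that Proposition \ref{prop:forwardStability} applies only to coefficients satisfying Assumption \ref{ass:ellipticForward} (positivity bounded away from zero and a Hölder bound). I would handle this by intersecting with $\text{supp}_{C^{\beta}}(\prior)$: because the prior is almost surely supported on coefficients satisfying these bounds, every $\input$ contributing $\prior$-mass to $B_{\epsilon/M}^{C^{\beta}}(\truth)$ is admissible and the stability estimate applies to it, while coefficients outside the support carry no $\prior$-mass and therefore affect neither small ball probability. Matching the Lipschitz constant $M$ to rescale the radius from $\epsilon$ to $\epsilon/M$ and absorbing the resulting factor $M^{\smallBallAsymp}$ into $\succsim$ are then routine.
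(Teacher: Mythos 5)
Your proposal is correct and follows essentially the same route as the paper's own proof: apply Proposition \ref{prop:forwardStability} with $\alpha=\beta$ to turn a $C^{\beta}$-ball around $\truth$ into a subset of (the preimage of) a $C^{\beta+1}$-ball around $\pressure^{\dagger}$, then use the definition of the push-forward measure and absorb the Lipschitz constant into $\succsim$. Your explicit handling of the rescaled radius $\epsilon/M$ and of the admissibility of coefficients in the support of $\prior$ merely fills in details the paper leaves implicit.
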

\begin{proof}
Proposition \ref{prop:forwardStability} implies that 
\begin{eqnarray*}
\left\Vert \input-\truth\right\Vert _{C^{\beta}} & \leq & \epsilon\Rightarrow\left\Vert p-\pressure^{\dagger}\right\Vert _{C^{\beta}}\leq M\epsilon\\[5pt]
\text{ and }p\left(B_{\epsilon}^{C^{\beta}}\left(\truth\right)\right) & \subseteq & B_{C\epsilon}^{C^{\beta+1}}\left(\pressure^{\dagger}\right).
\end{eqnarray*}
Hence the statement follows.
\end{proof}

Having established Lemma \ref{lem:pushforwardSmallBall}, we are now
in the position to prove the main theorem of this section.

\begin{proof}[Proof of Theorem \ref{thm:ellipticNoRate}]

Subsequently, $M$ will denote a generic constant in different contexts
that may change form line to line. We will first prove posterior consistency
in $L^{\infty}$ before we use an interpolation inequality to bootstrap
it to $C^{\tilde{\alpha}}$. In order to prove posterior consistency
in the $L^{\infty}$-norm, it is enough to show posterior consistency
of the \BRP \  in the $C^{2}$-norm because 
\begin{equation}
\mu^{y_{n}}\bigl(B_{\epsilon}^{L^{\infty}}(\truth)\bigr)=\tilde{\mu}^{y_{n}}\bigl(p(B_{\epsilon}^{L^{\infty}}(\truth)\bigr)\ge\tilde{\mu}^{y_{n}}\left(B_{\frac{\epsilon}{M}}^{C^{2}}(\pressure^{\dagger})\right)\label{eq:thmNotTailNoRateStability}
\end{equation}
which follows by an application of Proposition \ref{prop:Richter}
and a change of variables (see Theorem \ref{thm:IndpendentOfDiscription}).
Using Theorem 6.19 from \cite{MR1814364}, we may conclude that 
\begin{eqnarray*}
\bigl\Vert p\bigr\Vert_{H^{\alpha+2}}\lesssim\bigl\Vert p\bigr\Vert_{C^{\alpha+2}} & \leq K\;\tilde{\mu}_{0}\text{-a.s..}
\end{eqnarray*}
Since $\alpha+2>r$, $p$ is $\tilde{\mu}_{0}$-a.s. an element of
the Cameron-Martin space of $\mu_{\noise}$ as it corresponds to $H^{r}$.
Posterior consistency of the \BRP\  with respect to the $H^{r}$-norm
is now implied by Theorem \ref{thm:NoTail}. Its conditions are satisfied
because
\[
\left\Vert p\right\Vert _{\mathcal{H}^{s}}\leq M\Lambda\quad\tilde{\mu}_{0}\text{-a.s.}
\]
with 
\[
s=\frac{\alpha+2}{r}>1+\frac{d}{2r}=1+\sigma_{0}.
\]

Furthermore, Proposition \ref{prop:forwardStability} and the fact
that $\truth\in\text{supp}_{C^{\beta}}\prior$ imply that $p(\truth)\in\text{supp}_{H^{r}}\tilde{\mu_{0}}$
. In order to bootstrap to posterior consistency in the $C^{2}$-norm,
we use a generalisation of the Sobolev embedding theorem for Besov
spaces and an interpolation inequality between Besov spaces on domains
(for details consult \cite{triebel2008function}). We first note that
$B_{22}^{\tau}=H^{\tau}$ and $C^{\tau}=B_{\infty\infty}^{\tau}$
for $\tau\notin\mathbb{Z}$. In particular Theorem 4.33 in \cite{triebel2008function}
implies that 
\[
\bigl\Vert g\bigr\Vert_{B_{\infty\infty}^{r-\frac{d}{2}-\gamma}}\leq M\bigl\Vert g\bigr\Vert_{H^{r}}
\]
for $\gamma>0$ being small. If $r>\frac{d}{2}+2$, we can conclude
posterior consistency in the $C^{2}$-norm because 
\begin{equation}
B_{\frac{\epsilon}{M}}^{C^{2}}(\pressure^{\dagger})\supseteq\left\{ p\vert\bigl\Vert p-p^{\dagger}\bigr\Vert_{B_{\infty\infty}^{r-\frac{d}{2}-\gamma}}\leq\frac{\epsilon}{M}\right\} \supseteq\left\{ p\vert\bigl\Vert p-p^{\dagger}\bigr\Vert_{H^{r}}\leq\frac{\epsilon}{M}\right\} \label{eq:ellipticSmallNoiseEnough}
\end{equation}
holds for $\gamma$ small enough. Otherwise, we use the interpolation
inequality between Besov spaces subject of Theorem 4.17 in \cite{triebel2008function}
\begin{equation}
\bigl\Vert g\bigr\Vert_{C^{2+\gamma}}\leq\bigl\Vert g\bigr\Vert_{B_{\infty\infty}^{r-\frac{d}{2}-\gamma}}^{\theta}\bigl\Vert g\bigr\Vert_{B_{\infty\infty}^{\alpha+2}}^{1-\theta}\label{eq:ellipticNoRateNoTail}
\end{equation}
for $\gamma$ small enough and with $\theta=\frac{\alpha}{\alpha+2+\frac{d}{2}-r+\gamma}$.
Similar to Equation (\ref{eq:ellipticSmallNoiseEnough}), it follows
that 
\begin{eqnarray}
B_{\frac{\epsilon}{M}}^{C^{2}}(\pressure^{\dagger}) \supseteq \left\{ p\vert\bigl\Vert p-p^{\dagger}\bigr\Vert_{B_{\infty\infty}^{r-\frac{d}{2}-\gamma}}^{\theta}\leq\frac{\epsilon}{KM}\right\} \nonumber \supseteq \left\{ p\vert\bigl\Vert p-p^{\dagger}\bigr\Vert_{H^{r}}\leq\frac{\epsilon^{\theta^{-1}}}{M}\right\} .\label{eq:ellipticSmallNoiseInterp}
\end{eqnarray}
The Equations (\ref{eq:ellipticSmallNoiseEnough}) and (\ref{eq:ellipticSmallNoiseInterp})
allow us to bootstrap the posterior consistency of $\tilde{\mu}^{y_{n}}$
to $C^{2}.$ Equation (\ref{eq:thmNotTailNoRateStability}) implies
posterior consistency of $\mu^{y_{n}}$ in the $L^{\infty}$-norm.
Similarly, we bootstrap to posterior consistency in $C^{\tilde{\alpha}}$
for $\tilde{\alpha}<\alpha$ using the same interpolation technique
as above.

In order to obtain a rate for posterior consistency, we first note
that $\log(\prior(B_{\epsilon}^{C^{\beta}}(\truth))\succsim-\epsilon^{-\smallBallAsymp}$
implies 
\[
\log(\tilde{\prior}(B_{\epsilon}^{r}(\truth))\succsim-\epsilon^{-\smallBallAsymp}
\]
 due to Lemma \ref{lem:pushforwardSmallBall}. Now Theorem \ref{thm:NoTail}
implies posterior consistency of the sequence of posteriors $\tilde{\mu}^{y_{n}}$
in $H^{r}$ with any rate $\kappa$ such that
\[
\kappa<\frac{1}{2+\rho}\wedge\frac{\alpha+1}{2\left(\alpha+1+\frac{d}{2r}\right)}.
\]
Using the interpolation inequality as above gives rise to posterior
consistency for\textbf{ }$\tilde{\mu}^{y_{n}}$ in $C^{2+\alpha}$
with rate $n^{-\kappa}$ for any $\kappa$ such that
\[
\kappa<\left(\frac{\alpha}{\alpha+2+\frac{d}{2}-r}\wedge1\right)\left(\frac{1}{2+\rho}\wedge\frac{\alpha}{2\left(\alpha+1+\frac{d}{2r}\right)}\right).
\]
As above, this implies the same rate of posterior consistency for
$\posterior$ in $L^{\infty}.$

\end{proof}

\subsubsection{Uniform Prior\label{sub:Posterior-Consistency-Rate}}

\global\long\def\sball{\nu}

In this section, we establish a rate of posterior consistency for
the \EIP\  with the so-called uniform prior introduced in \cite{stuartchinanotes,2012arXiv1207.2411H,Con4}.
This choice of the prior was motivated by the preceding analysis in
the uncertainty quantification literature, see for instance \cite{2010SchwabEllipticUQ,UQellipticPDESchwab}.
It is given by

\begin{equation}
\prior=\mathcal{L}\left(a_{0}(x)+\sum_{i=1}^{\infty}\gamma_{i}z_{i}\psi_{i}(x)\right)\quad z_{i}\overset{\text{i.i.d.}}{\sim}\mathcal{U}[-1,1]\label{eq:uniformPrior}
\end{equation}
where $\mathcal{L}$ denotes the law of a random variable. Moreover,
we suppose that $\left\Vert \psi_{i}(x)\right\Vert _{C^{\beta}}=1$,
$\gamma_{i}>0$ and $S=\sum_{i=1}^{\infty}\gamma_{i}<\infty$ such
that
\[
0<a_{\text{min}}\leq a\le a_{\text{max}}\quad\prior\text{-a.s.}.
\]
In order to obtain a rate for the \EIP\  with this prior, we derive
a small ball asymptotic under an appropriate assumption on the decay
of $\{\gamma_{i}\}$. 
\begin{assumption}
\label{ass:coefficientDecay}There exists $\sball^{\star}\in(0,1)$
such that for all $\mathfrak{\sball>\sball^{\star}}$ 
\[
S_{\mathfrak{\sball}}=\left(\sum_{i=1}^{\infty}\gamma_{i}^{\sball}\right)^{\frac{1}{\mathfrak{\sball}}}<\infty.
\]

\end{assumption}
Since the series in Equation (\ref{eq:uniformPrior}) is absolutely
convergent, we assume without loss of generality that $\gamma_{i}$
is decreasing. This allows us to use the following classical inequality
from approximation theory \cite{devore1998nonlinear} 
\begin{equation}
\left(\sum_{n>N}\gamma_{n}\right)\leq N^{1-\frac{1}{\mathfrak{\sball}}}S_{\sball}.\label{eq:summabilityExp}
\end{equation}

\begin{lem}
\label{thm:smallBallPrior}Suppose that $\prior$ is given as in Equation
(\ref{eq:uniformPrior}), Assumption \ref{ass:coefficientDecay} is
satisfied with $\sball^{\star}$ and
\[
\truth=\sum_{i=1}^{\infty}\gamma_{i}z_{i}^{\dagger}\psi_{i}(x)\quad\text{where }z_{i}^{\dagger}\in[-1,1].
\]
Then for any $\mathfrak{\sball>\sball^{\star}}$
\[
\log\prior\bigl(B_{\epsilon}^{C^{\beta}}(\truth)\bigr)\gtrsim-\epsilon^{-\frac{1}{\frac{1}{\sball}-1}}.
\]
\end{lem}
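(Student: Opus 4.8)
The plan is to reduce the $C^{\beta}$ small-ball event to a condition on the random coefficients $z_i$ and then to estimate that condition by separating a deterministic tail from a stochastic head. Writing a prior draw as $\input=a_{0}+\sum_{i}\gamma_{i}z_{i}\psi_{i}$ and using that $\truth$ shares the same deterministic part, the difference is $\input-\truth=\sum_{i}\gamma_{i}(z_{i}-z_{i}^{\dagger})\psi_{i}$. Since $\left\Vert \psi_{i}\right\Vert _{C^{\beta}}=1$ and $z_{i},z_{i}^{\dagger}\in[-1,1]$, the triangle inequality gives
\[
\left\Vert \input-\truth\right\Vert _{C^{\beta}}\le\sum_{i=1}^{\infty}\gamma_{i}\vert z_{i}-z_{i}^{\dagger}\vert,
\]
so it suffices to bound below the probability that the right-hand series is at most $\epsilon$.

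First I would split this series at a truncation level $N$. For the tail I use the crude bound $\vert z_{i}-z_{i}^{\dagger}\vert\le2$ together with the approximation inequality (\ref{eq:summabilityExp}), giving the deterministic estimate
\[
\sum_{i>N}\gamma_{i}\vert z_{i}-z_{i}^{\dagger}\vert\le2\sum_{i>N}\gamma_{i}\le2N^{1-\frac{1}{\sball}}S_{\sball}.
\]
Because $1-\frac{1}{\sball}<0$, choosing $N\asymp\epsilon^{-\frac{1}{1/\sball-1}}$ forces this tail below $\epsilon/2$. For the head I would require $\vert z_{i}-z_{i}^{\dagger}\vert\le\epsilon/(2S)$ for every $i\le N$; since $\sum_{i\le N}\gamma_{i}\le S$ this keeps the head below $\epsilon/2$, and by independence and the uniform density bound (the interval $[z_{i}^{\dagger}-c,z_{i}^{\dagger}+c]$ meets $[-1,1]$ in length at least $c$) each coordinate contributes a factor of at least $\frac{\epsilon}{4S}$. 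Hence
\[
\prior\bigl(B_{\epsilon}^{C^{\beta}}(\truth)\bigr)\ge\Bigl(\frac{\epsilon}{4S}\Bigr)^{N},\qquad\log\prior\bigl(B_{\epsilon}^{C^{\beta}}(\truth)\bigr)\gtrsim N\log\epsilon\asymp-\epsilon^{-\frac{1}{1/\sball-1}}\log(1/\epsilon).
\]

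The main obstacle is the spurious factor $\log(1/\epsilon)$: this crude estimate yields $-\epsilon^{-\frac{1}{1/\sball-1}}\log(1/\epsilon)$ rather than the clean exponent claimed. I would remove it using the freedom in the choice of $\sball$. Setting $f(\sball)=\frac{1}{1/\sball-1}=\frac{\sball}{1-\sball}$, which is strictly increasing on $(0,1)$, I would, given a target $\sball>\sball^{\star}$, run the entire argument with an intermediate exponent $\sball'\in(\sball^{\star},\sball)$. Then $f(\sball)-f(\sball')>0$, so $\epsilon^{f(\sball)-f(\sball')}\log(1/\epsilon)\to0$ as $\epsilon\downarrow0$, whence $\epsilon^{-f(\sball')}\log(1/\epsilon)\le\epsilon^{-f(\sball)}$ for small $\epsilon$ and the stated bound for $\sball$ follows. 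Everything apart from this absorption step is routine.
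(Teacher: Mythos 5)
Your proof is correct and follows essentially the same route as the paper's: a head/tail split at $N_{\epsilon}\asymp\epsilon^{-\frac{1}{1/\sball'-1}}$, the tail controlled deterministically via Inequality (\ref{eq:summabilityExp}) with an intermediate exponent $\sball'\in(\sball^{\star},\sball)$ (the paper's $\tilde{\sball}$), and the head forced into coordinate-wise intervals of length $\asymp\epsilon/S$, giving the bound $\bigl(\epsilon/(4S)\bigr)^{N_{\epsilon}}$. Your treatment is in fact slightly more careful than the paper's, which glosses over the factor $2$ in the tail, the uniform density $\tfrac12$, and, most notably, the absorption of the spurious $\log(1/\epsilon)$ factor into the gap between the exponents for $\sball'$ and $\sball$ — the very step your argument makes explicit.
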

\begin{proof}
We obtain an asymptotic lower bound on the small ball probability
by choosing an appropriate subset $D_{\epsilon}(\truth)$ of $B_{\epsilon}^{C^{\beta}}(\truth)$.
We denote a generic element of this set by 
\[
a=\sum_{i=1}^{\infty}\gamma_{i}z_{i}\psi_{i}(x).
\]
Choosing $N_{\epsilon}$ such that $\sum_{i=N_{\epsilon}}^{\infty}\gamma_{i}\leq\frac{\epsilon}{2}$,
the corresponding terms contribute at most $\frac{\epsilon}{2}$ to
the difference $\left\Vert \truth-\input\right\Vert $. The subset
$D_{\epsilon}(\truth)$ prescribes intervals for $z_{i}$ $i=1\dots N_{\epsilon}$
such that this contribution is at most $\frac{\epsilon}{2}$, too.
More precisely, let $\sball^{\star}<\tilde{\mathfrak{\sball}}<\sball$,
then Equation (\ref{eq:summabilityExp}) implies
\[
\sum_{n>N_{\epsilon}}\gamma_{n}\le N_{\epsilon}^{1-\frac{1}{\mathfrak{\tilde{\sball}}}}S_{\tilde{\sball}}\leq\frac{\epsilon}{2}
\]
 for $N_{\epsilon}\ge\left(\frac{2S_{\tilde{\sball}}}{\epsilon}\right)^{\frac{1}{\frac{1}{\mathfrak{\tilde{\sball}}}-1}}$.
Let the subset $D_{\epsilon}(\truth)\subseteq B_{\epsilon}^{C^{\beta}}(\truth)$
be given by 
\[
\hspace{-2cm}\left\{ a\big\vert\left(z_{i}^{\dagger}>0\wedge z_{i}^{\dagger}-\frac{\epsilon}{2S}\leq z_{i}\le z_{i}^{\dagger}\right)\vee\left(z_{i}^{\dagger}\leq0\wedge z_{i}^{\dagger}+\frac{\epsilon}{2S}\ge z_{i}\ge z_{i}^{\dagger}\right)\,1\leq i\leq N_{\epsilon}\right\} .
\]
Then 
\begin{eqnarray*}
\prior\left(B_{\epsilon}^{C^{\beta}}(\truth)\right) & \ge & \left(\frac{\epsilon}{2S}\right)^{N_{\epsilon}}\\
\log\prior\left(B_{\epsilon}^{C^{\beta}}(\truth)\right) & \gtrsim & N_{\epsilon}\log\epsilon\gtrsim-\epsilon^{-\frac{1}{\sball-1}}.
\end{eqnarray*}
\end{proof}
Combining Lemma \ref{thm:smallBallPrior} and Theorem \ref{thm:ellipticNoRate}
results in the following theorem which characertises posterior consistency
for this class of priors.
\begin{thm}
Let the prior $\prior$ be defined as in Equation (\ref{eq:uniformPrior})
and let Assumption \ref{ass:coefficientDecay} be satisfied. Additionally,
we assume that $\alpha\ge\beta\ge r+1$, $\alpha>r+\frac{d}{2}-2$
and $\Vert a\Vert_{\alpha}\leq K\:\prior-\text{a.s.}.$ Then the posterior
$\posterior$ is consistent for any 
\[
\,\truth=\sum_{i=1}^{\infty}\gamma_{i}z_{i}^{\dagger}\psi_{i}(x)\quad\text{where }z_{i}^{\dagger}\in[-1,1]
\]
with respect to \textup{the $L^{\infty}$-norm} with rate $\epsilon_{n}=M(\kappa)n^{-\kappa}$
for any $\kappa$ such that 
\[
\kappa<\left(\frac{\alpha}{\alpha+2+\frac{d}{2}-r}\wedge1\right)\left(\frac{1-\sball}{2-\sball}\wedge\frac{\alpha-r+2}{2\alpha+d-2r+4}\right).
\]
\end{thm}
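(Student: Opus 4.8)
The statement follows by feeding the small ball asymptotic of Lemma~\ref{thm:smallBallPrior} into Theorem~\ref{thm:ellipticNoRate}, so the plan is to verify the hypotheses of the latter and then carry out the substitution. First I would check that the uniform prior of Equation~(\ref{eq:uniformPrior}) meets the standing assumptions of Theorem~\ref{thm:ellipticNoRate}. The almost sure positivity $a\ge a_{\text{min}}>0$ is built into the construction of the prior, the almost sure bound $\Vert a\Vert_{C^{\alpha}}\le K$ is assumed directly, and the noise is the prescribed $\noise\sim\mathcal{N}(0,(-\Delta_{\text{Dirichlet}})^{-r})$. The regularity inequalities needed by Theorem~\ref{thm:ellipticNoRate}, namely $\alpha>r+\frac{d}{2}-2$ and $\beta+1>r$, follow from the hypotheses $\alpha>r+\frac{d}{2}-2$ and $\beta\ge r+1$. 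The membership $\truth\in\text{supp}_{C^{\beta}}\prior$ I would read off directly from Lemma~\ref{thm:smallBallPrior}, whose lower bound is strictly positive for every $\epsilon>0$; the ordering $\alpha\ge\beta$ merely records that the almost sure regularity level $\alpha$ dominates the level $\beta$ at which the small ball is measured, so that the two pieces of prior information are consistent.

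Next I would invoke Lemma~\ref{thm:smallBallPrior}. For any fixed summability exponent $\sball>\sball^{\star}$ supplied by Assumption~\ref{ass:coefficientDecay} it yields
\[
\log\prior\bigl(B_{\epsilon}^{C^{\beta}}(\truth)\bigr)\gtrsim-\epsilon^{-\smallBallAsymp},\qquad \smallBallAsymp=\frac{1}{\frac{1}{\sball}-1}=\frac{\sball}{1-\sball},
\]
so that the small ball exponent $\smallBallAsymp$ demanded by Theorem~\ref{thm:ellipticNoRate} becomes explicit in terms of $\sball$.

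Finally I would substitute this value of $\smallBallAsymp$ into the rate furnished by Theorem~\ref{thm:ellipticNoRate}. A short computation gives $2+\smallBallAsymp=\frac{2-\sball}{1-\sball}$ and hence $\frac{1}{2+\smallBallAsymp}=\frac{1-\sball}{2-\sball}$, which is precisely the first term inside the second bracket of the claimed rate. The remaining term $\frac{\alpha-r+2}{2\alpha+d-2r+4}$ and the interpolation prefactor $\frac{\alpha}{\alpha+2+\frac{d}{2}-r}\wedge1$ are inherited unchanged from Theorem~\ref{thm:ellipticNoRate}: in the notation of Theorem~\ref{thm:NoTail} the former is exactly $\frac{1}{2(2-\HilbertScalesInter)}$ evaluated at the Hilbert-scale parameters $s=\frac{\alpha+2}{r}$ and $\sigma_{0}=\frac{d}{2r}$ that the choice $\noise\sim\mathcal{N}(0,(-\Delta_{\text{Dirichlet}})^{-r})$ forces through the Weyl asymptotics. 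Taking the minimum of the two inner terms and multiplying by the prefactor reproduces the displayed bound, and since $\frac{1-\sball}{2-\sball}$ is decreasing in $\sball$ the sharpest rate is obtained by letting $\sball$ approach $\sball^{\star}$. The only genuinely delicate point is the bookkeeping: making sure that the Hilbert-scale indices $(s,\sigma_{0})$ and the interpolation exponent align so that the abstract rate of Theorem~\ref{thm:ellipticNoRate} collapses to this closed form; the rest is a direct citation of the two results being combined.
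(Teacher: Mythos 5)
Your proposal is correct and follows essentially the same route as the paper, which states the theorem as a direct combination of Lemma \ref{thm:smallBallPrior} (giving $\rho=\frac{\sball}{1-\sball}$, hence $\frac{1}{2+\rho}=\frac{1-\sball}{2-\sball}$) with Theorem \ref{thm:ellipticNoRate}. Your verification of the hypotheses and, in particular, your tracing of the term $\frac{\alpha-r+2}{2\alpha+d-2r+4}$ back to $\frac{1}{2(2-\HilbertScalesInter)}$ evaluated at $s=\frac{\alpha+2}{r}$, $\sigma_{0}=\frac{d}{2r}$ is exactly the bookkeeping the paper's derivation relies on.
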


\subsection{Posterior Consistency in the Large Data Limit}

In the following we show that the results for the \BRP\  can be transferred
to posterior consistency results in the large data limit for the \EIP.
We consider only the case $d=1$ with $D=[0,1]$ as the general case
is similar. Furthermore, assuming that the observations are of the
form
\[
y_{i}=p(x_{i};a)+\noise\quad i=1\dots n,
\]
the sequence of posteriors is given by 
\[
\frac{d\posterior}{d\prior}(a)\propto\exp\left(-\sum_{i=1}^{n}\frac{\left(p(a)(x_{i})-y_{i}\right)^{2}}{2\sigma^{2}}\right).
\]

Posterior consistency of the \EIP\  in $L^{\infty}$ can then be
derived on the basis of Theorem \ref{thm:largeData}. 
\begin{thm}
Suppose that the sequence $\{x_{i}\}$ satisfies Assumption \ref{ass:equally spaced},
$\bigl\Vert\input\bigr\Vert_{C^{\gamma}}\leq L$ $\prior$-a.s. with
$\gamma>1$ and $\input\ge\input_{min}$ $\prior$-a.s.. If $\truth\in\text{supp}_{C^{\gamma}}\prior$,
then the \EIP\  is posterior consistent in the large data limit with
respect to $C^{\tilde{\gamma}}$ for any $\tilde{\gamma}<\gamma.$
\begin{proof}
An application of Theorem 6.13 in \cite{MR1814364} yields the existence
of $M(D,\gamma)$ so that for all $\input$ satisfying
\[
\bigl\Vert\input\bigr\Vert_{C^{\gamma}}\leq S\text{ and }\input\ge\input_{min}
\]
 there is a unique solution $p$ such that $\left\Vert \pressure\right\Vert _{C^{2+\gamma}}\leq M$.
Thus, $\tilde{\mu}_{0}=p_{\star}\mu_{0}$ satisfies the assumptions
of Theorem \ref{thm:largeData} implying that $\tilde{\mu}^{\data_{1:n}}$
is posterior consistent in $L^{\infty}.$ Using the interpolation
inequality between $L^{\infty}$ and $C^{2+\gamma}$, we also obtain
consistency in $C^{2}$. As in Theorem \ref{thm:ellipticNoRate},
Proposition \ref{prop:Richter} can be used in order to conclude posterior
consistency of $\mu^{\data_{1:n}}$ in $L^{\infty}$. We can bootstrap
from $L^{\infty}(D)$ to $C^{\tilde{\gamma}}$ by interpolating between
$L^{\infty}$ and $C^{\gamma}$.
\end{proof}
\end{thm}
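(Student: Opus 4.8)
The plan is to reuse, essentially verbatim, the reduction scheme that drove Theorem~\ref{thm:ellipticNoRate}, but with the large-data regression result Theorem~\ref{thm:largeData} playing the role that the small-noise Theorem~\ref{thm:NoTail} played there. The backbone is the stability estimate of Proposition~\ref{prop:Richter} combined with the change of variables of Theorem~\ref{thm:IndpendentOfDiscription}, which gives
\[
\mu^{y_{1:n}}\bigl(B_\epsilon^{L^\infty}(\truth)\bigr) \;\ge\; \tilde{\mu}^{y_{1:n}}\bigl(B_{\epsilon/M}^{C^2}(\pressure^\dagger)\bigr).
\]
Thus posterior consistency of the \EIP\ in $L^\infty$ follows once I have posterior consistency of the \BRP\ in $C^2$ for the push-forward prior $\tilde{\prior}=\pressure_\star\prior$. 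The whole proof therefore reduces to two things: checking that $\tilde{\prior}$ meets the hypotheses of Theorem~\ref{thm:largeData}, and upgrading the $L^\infty$ consistency that theorem delivers to the $C^2$ consistency demanded by the stability inequality.

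First I would pin down the almost-sure regularity of $\tilde{\prior}$. Elliptic forward regularity (Theorem~6.13 in~\cite{MR1814364}) turns the hypotheses $\bigl\Vert\input\bigr\Vert_{C^\gamma}\le L$ and $\input\ge\input_{\min}$ into a constant $M(D,\gamma)$ with $\left\Vert\pressure\right\Vert_{C^{2+\gamma}}\le M$ $\tilde{\prior}$-a.s. Since $2+\gamma>1$, this supplies both $\bigl\Vert\pressure\bigr\Vert_{C^\beta}\le M$ and $\bigl\Vert\pressure\bigr\Vert_\infty\le M$ for any $\beta\in(0,1]$, so Assumption~\ref{ass:postConDataPriorAss} holds for the push-forward prior; the design hypothesis (Assumption~\ref{ass:equally spaced}) is inherited unchanged. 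The forward stability of Proposition~\ref{prop:forwardStability}, applied exactly as in the proof of Theorem~\ref{thm:ellipticNoRate}, transfers the support condition $\truth\in\text{supp}_{C^\gamma}\prior$ into $\pressure^\dagger\in\text{supp}_{C^\beta}\tilde{\prior}$. With all hypotheses in place, Theorem~\ref{thm:largeData} yields posterior consistency of $\tilde{\mu}^{y_{1:n}}$ in $L^\infty$.

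Next I would bootstrap this $L^\infty$ statement up to the $C^2$ statement the reduction needs. Here I would invoke the interpolation technique of Section~\ref{sub:Convergence-in-Stronger}: split the event $\{\left\Vert\pressure-\pressure^\dagger\right\Vert_{C^2}>\epsilon\}$ into a low-order piece controlled by the $L^\infty$ consistency and a high-order piece controlled by the a.s.\ $C^{2+\gamma}$ bound, the two being glued by an interpolation inequality between $L^\infty$ and $C^{2+\gamma}$. This produces $C^2$ consistency of $\tilde{\mu}^{y_{1:n}}$, and feeding it into the displayed stability inequality gives posterior consistency of $\mu^{y_{1:n}}$ in $L^\infty$. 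A final interpolation between $L^\infty$ and $C^\gamma$ then upgrades this to $C^{\tilde{\gamma}}$ for any $\tilde{\gamma}<\gamma$, which is the assertion.

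The main obstacle I anticipate is precisely the norm mismatch at the centre of the reduction: Theorem~\ref{thm:largeData} is intrinsically an $L^\infty$ result, whereas Proposition~\ref{prop:Richter} needs $C^2$ control of the pressure before it returns any information about $\input$. The gap can only be closed because $\tilde{\prior}$ is almost surely bounded in the strictly higher norm $C^{2+\gamma}$, which keeps the interpolation exponent admissible; the argument therefore rests entirely on the elliptic regularity gain $C^\gamma\to C^{2+\gamma}$ and on the validity of the interpolation inequalities on the bounded domain $D$. As the representative case $d=1$, $D=[0,1]$ is treated and the higher-dimensional case is analogous, I would expect the remaining work to be routine bookkeeping rather than a genuine difficulty.
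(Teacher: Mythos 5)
Your proposal matches the paper's own proof essentially step for step: elliptic regularity (Theorem 6.13 of \cite{MR1814364}) to give the a.s.\ $C^{2+\gamma}$ bound on the push-forward prior, Theorem \ref{thm:largeData} for $L^\infty$ consistency of the \BRP, interpolation between $L^\infty$ and $C^{2+\gamma}$ to reach $C^2$, Proposition \ref{prop:Richter} to return to the \EIP\ in $L^\infty$, and a final interpolation between $L^\infty$ and $C^\gamma$ to get $C^{\tilde{\gamma}}$. It is correct, and your explicit check of the support-condition transfer via Proposition \ref{prop:forwardStability} is a detail the paper leaves implicit.
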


\section{Concluding Remarks\label{sec:Conclusion}}

In this article, we have established a novel link between stability
results for an inverse problem and posterior consistency for the Bayesian
approach to it. We have explicitly shown this link for an elliptic
inverse problem (c.f. \EIP) but the same method is also applicable
for the general case. An instance is electrical impedance tomography
(Calder�n problem) for which stability results are available \cite{alessandrini1988stable}.
This example would lead to a very slow posterior consistency rate
since its stability results are weak. Essentially, we would have to
redo all the calculations on a log-scale instead of an algebraic scale.

So far, we need exponential moments of the prior for the Bayesian
regression of functional response and for pointwise observations (see
also Section 4.2.2 in \cite{choi2007posterior}). For this reason
it is harder to prove posterior consistency for example for log-Gaussian
priors. Log-Gaussian measures have moments of arbitrary order but
no exponential moments. This is a problem that we would like to pursue
further in the future.

\appendix

\section{Notation and Review of Technical Tools\label{sec:Notation} }

\subsection{Asymptotic Inequalities}

We use the following notation for asymptotic inequalities:

Let $a_{n}$ and $b_{n}$ be sequences in $\mathbb{R}$. We denote
by $\mathbb{R}$ $a_{n}\lesssim b_{n}$ that there are $N\in\mathbb{N}$
and $M\in\mathbb{R}$ such that $a_{n}\leq Mb_{n}\text{ for }n\ge N.$
Moreover, if $a_{n}\lesssim b_{n}\lesssim a_{n}$, we write $a_{n}\asymp b_{n}.$

\subsubsection{Hilbert Scales\label{sub:Hilbert-Scales}}

In order to measure the smoothness of the noise and samples of the
prior, we introduce Hilbert scales following \cite{inverseProblem}.
Let $\obsCov$ be a self-adjoint, positive-definite, trace-class linear
operator with eigensystem $\left(\lambda_{k}^{2},\phi_{k}\right)$.
We know that $\obsCov^{-1}$ is a densely defined, unbounded, symmetric
and positive-definite operator because 
\[
H=\overline{\mathcal{R}(\obsCov)}\oplus\text{Ker}(\obsCov)^{\perp}=\overline{\mathcal{R}(\obsCov)}.
\]
We define the\textit{ Hilbert scale} by $\left((\mathcal{H}^{t},\left\langle \cdot,\cdot\right\rangle _{t})\right){}_{t\in\mathbb{R}}$
with $\mathcal{H}^{t}:=\overline{\mathcal{M}}^{\left\Vert \cdot\right\Vert _{t}}$
for 
\begin{eqnarray*}
\mathcal{M} & := & \bigcap_{n=0}^{\infty}\mathcal{D}(\obsCov^{-n})\\
\left\langle u,v\right\rangle _{t} & := & \left\langle \obsCov^{-\frac{t}{2}}u,\obsCov^{-\frac{t}{2}}v\right\rangle \\
\left\Vert u\right\Vert _{t} & := & \left\Vert \obsCov^{-\frac{t}{2}}u\right\Vert .
\end{eqnarray*}
We will denote balls with respect to the $\left\Vert \cdot\right\Vert _{t}$-norm
by 
\[
B_{R}^{t}(u)=\left\{ x\vert\left\Vert u-x\right\Vert _{t}\leq R\right\} .
\]

Moreover, these collection of norms satisfies an interpolation inequality
\begin{prop}
\label{lem:HScaleInterpolation}(Proposition 8.19 in \cite{inverseProblem})
Let $q<r<s$ then the following interpolation inequality holds
\[
\left\Vert x\right\Vert _{r}\leq\left\Vert x\right\Vert _{q}^{\frac{s-r}{s-q}}\left\Vert x\right\Vert _{s}^{\frac{r-q}{s-q}}.
\]
\end{prop}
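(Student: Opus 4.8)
The plan is to reduce the inequality to a modewise application of H\"older's inequality after expanding $x$ in the eigenbasis $\{\phi_k\}$ of $\obsCov$. Writing $x_k=\langle x,\phi_k\rangle$ and using $\obsCov^{-t/2}\phi_k=\lambda_k^{-t}\phi_k$, the defining norm becomes $\|x\|_t^2=\|\obsCov^{-t/2}x\|^2=\sum_k \lambda_k^{-2t}x_k^2$, which recasts each Hilbert-scale norm as a weighted $\ell^2$-norm of the coefficient sequence with weight $\lambda_k^{-2t}$. This turns the abstract interpolation claim into a concrete statement about weighted sums of nonnegative numbers.

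The key observation is that the middle index is a convex combination of the outer two. Setting $\theta=\frac{s-r}{s-q}\in(0,1)$, so that $1-\theta=\frac{r-q}{s-q}$, a direct computation gives $r=\theta q+(1-\theta)s$, and hence the weight factorises modewise as $\lambda_k^{-2r}=(\lambda_k^{-2q})^{\theta}(\lambda_k^{-2s})^{1-\theta}$. Splitting the coefficient factor as $x_k^2=(x_k^2)^{\theta}(x_k^2)^{1-\theta}$ then yields
\[
\|x\|_r^2=\sum_k \left(\lambda_k^{-2q}x_k^2\right)^{\theta}\left(\lambda_k^{-2s}x_k^2\right)^{1-\theta}.
\]
Next I would apply H\"older's inequality for sums with the conjugate exponents $1/\theta$ and $1/(1-\theta)$ to the nonnegative sequences $a_k=\lambda_k^{-2q}x_k^2$ and $b_k=\lambda_k^{-2s}x_k^2$. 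This bounds the right-hand side by $\left(\sum_k a_k\right)^{\theta}\left(\sum_k b_k\right)^{1-\theta}=(\|x\|_q^2)^{\theta}(\|x\|_s^2)^{1-\theta}$; taking square roots gives exactly $\|x\|_r\le\|x\|_q^{\theta}\|x\|_s^{1-\theta}$ with the stated exponents $\frac{s-r}{s-q}$ and $\frac{r-q}{s-q}$.

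There is essentially no hard analytic obstacle here; the only points requiring care are bookkeeping ones. One should note that the claim is nontrivial only when $\|x\|_s<\infty$ (equivalently $x\in\mathcal{H}^s$), since otherwise the right-hand side is infinite and the bound is automatic; when $x\in\mathcal{H}^s$ the ordering $\mathcal{H}^s\subseteq\mathcal{H}^r\subseteq\mathcal{H}^q$, a consequence of $\lambda_k\to 0$ for the trace-class operator $\obsCov$ (so that $\lambda_k^{-2t}$ is eventually increasing in $t$), guarantees all three sums converge and the manipulations above are legitimate. The convexity identity $r=\theta q+(1-\theta)s$ together with the conjugacy $\theta+(1-\theta)=1$ is precisely what makes the single application of H\"older's inequality close the estimate cleanly.
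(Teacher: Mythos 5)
Your proof is correct and is essentially the argument behind the cited result (Proposition 8.19 in the reference): the paper itself does not reprove the inequality, and the textbook proof is the same spectral-decomposition-plus-H\"older argument, merely phrased via the moment inequality for the spectral family rather than an explicit eigenbasis sum. Your convexity identity $r=\theta q+(1-\theta)s$ with $\theta=\frac{s-r}{s-q}$ and the single application of H\"older's inequality with exponents $1/\theta$ and $1/(1-\theta)$ is exactly the standard mechanism, and your modewise expansion is legitimate here since $\obsCov$ is assumed trace-class with eigensystem $(\lambda_k^2,\phi_k)$.
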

\begin{rem*}
Our definition here is slightly different from the literature in order
to match it to the Sobolev spaces for $\obsCov=(-\Delta_{\text{Dirichlet}})^{-1}$
\end{rem*}

\subsection{Gaussian Measures}

In this section, we set out our notation for some standard results
about infinite dimensional Gaussian measures which can be found in
the following textbooks and lecture notes \cite{gaussianMeasureas,StochEqnInf,HairerSPDE}.
Let $\gamma$ be a Gaussian measure on a Hilbert space $(H,\left\langle \cdot,\cdot\right\rangle )$.
It is characterised by its \textit{mean} given by the Bochner integral
\begin{eqnarray*}
m & = & \int_{H}x\, d\gamma(x)
\end{eqnarray*}
and the \textit{covariance operator} $\obsCov:H\rightarrow H$
characterised by the relation
\[
\left\langle Cu,v\right\rangle =\int\left\langle u-m,x\right\rangle \left\langle v-m,x\right\rangle d\gamma(x).
\]
From this it is clear that the covariance operator is positive-definite
and self-adjoint. Moreover, we note that $\obsCov$ is necessarily trace-class
and the Gaussian can be expressed through eigenvalues $\lambda_{k}^{2}$
and the corresponding eigenbasis $\phi_{k}$
\[
\gamma=\mathcal{L}\left(m+\sum_{i=1}^{\infty}\lambda_{k}\phi_{k}\xi_{k}\right)\text{ with}\quad\xi_{k}\overset{\text{i.i.d}}{\sim}\mathcal{N}(0,1).
\]
The \textit{Cameron-Martin space} associated with $\gamma$ is 
\[
H_{\gamma}=\left\{ x\big\vert x=\sum x_{i}\phi_{i}\text{ s.t. }\sum\frac{1}{\lambda_{i}^{2}}x_{i}^{2}<\infty\right\} \subset H
\]
equipped with the inner product 
\[
\left\langle x,y\right\rangle _{\gamma}=\sum\frac{1}{\lambda_{i}^{2}}x_{i}y_{i}
\]
where $x=\sum x_{i}\phi_{i}$ and $y=\sum y_{i}\phi_{i}$.This space
characterises the support as well as the direction such that 
\[
T_{h\star}\gamma\ll\gamma
\]
where $T_{h}$ is the translation operator $T_{h}(x)=x+h$.

We also consider the Hilbert scale $(\mathcal{H}^s,\big\Vert \cdot \big\Vert_s)$ generated by $\gamma$ and the regularity of a draw $\zeta\sim\gamma$ can be expressed as follows.

\begin{lem}
\label{lem:regularityNoise}(\cite{2012arXiv1203.5753A}) Imposing
Assumption \ref{ass:noiseTraceSumm} the following statements hold:
\begin{enumerate}
\item Let $\zeta$ be a white noise, then $\mathbb{E}\Vert\Gamma^{\frac{\sigma}{2}}\zeta\Vert<\infty$
for all $\sigma>\sigma_{0}$. 
\item Let $u\sim\mu_{0}$, then $u\in\mathcal{H}^{1-\sigma}$ $\;\mu_{0}$-a.s.
for every $\sigma>\sigma_{0}$. 
\end{enumerate}
\end{lem}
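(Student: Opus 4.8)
The plan is to reduce both items to one elementary second-moment computation in the eigenbasis $\{\phi_k\}$ of $\Gamma$, the sole analytic input being Assumption~\ref{ass:noiseTraceSumm}, i.e.\ $\sum_k\lambda_k^{2\sigma}=\text{Tr}(\Gamma^{\sigma})<\infty$ for every $\sigma>\sigma_0$ (recall the eigenvalues of $\Gamma$ are $\lambda_k^2$). In both cases the object is a random series $\sum_k c_k\xi_k\phi_k$ with $\xi_k\overset{\text{i.i.d.}}{\sim}\mathcal{N}(0,1)$, and the square of the relevant norm collapses to $\sum_k\lambda_k^{2\sigma}\xi_k^2$, whose expectation is exactly $\text{Tr}(\Gamma^{\sigma})$; finiteness of this expectation then forces the norm to be finite.

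For item (i) I would write white noise as the (formal) expansion $\zeta=\sum_k\xi_k\phi_k$, so that $\Gamma^{\sigma/2}\zeta=\sum_k\lambda_k^{\sigma}\xi_k\phi_k$ because $\Gamma^{\sigma/2}\phi_k=\lambda_k^{\sigma}\phi_k$. Computing termwise gives
\[
\mathbb{E}\bigl\Vert\Gamma^{\sigma/2}\zeta\bigr\Vert^{2}=\sum_k\lambda_k^{2\sigma}\,\mathbb{E}[\xi_k^{2}]=\sum_k\lambda_k^{2\sigma}=\text{Tr}(\Gamma^{\sigma})<\infty
\]
for $\sigma>\sigma_0$. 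Finiteness of the tail sums shows the partial sums are Cauchy in $L^{2}(\Omega;H)$, so $\Gamma^{\sigma/2}\zeta$ is a genuine $H$-valued random variable, and Jensen's inequality yields $\mathbb{E}\bigl\Vert\Gamma^{\sigma/2}\zeta\bigr\Vert\le\bigl(\mathbb{E}\bigl\Vert\Gamma^{\sigma/2}\zeta\bigr\Vert^{2}\bigr)^{1/2}<\infty$.

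For item (ii) a draw $u\sim\mu_0$ --- here the centred Gaussian $\mathcal{N}(0,\Gamma)$ whose covariance generates the scale (as used for the noise in Theorem~\ref{thm:NoTail}) --- has Karhunen--Lo\`eve representation $u=\sum_k\lambda_k\xi_k\phi_k$, whence $\langle u,\phi_k\rangle=\lambda_k\xi_k$. Inserting this into $\bigl\Vert u\bigr\Vert_{1-\sigma}^{2}=\sum_k\lambda_k^{-2(1-\sigma)}|\langle u,\phi_k\rangle|^{2}$ from \ref{sec:Notation}, one power of $\lambda_k^2$ cancels and
\[
\mathbb{E}\bigl\Vert u\bigr\Vert_{1-\sigma}^{2}=\sum_k\lambda_k^{-2(1-\sigma)}\lambda_k^{2}\,\mathbb{E}[\xi_k^{2}]=\sum_k\lambda_k^{2\sigma}=\text{Tr}(\Gamma^{\sigma})<\infty
\]
for $\sigma>\sigma_0$. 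Since $\sum_k\lambda_k^{2\sigma}\xi_k^{2}$ is a sum of nonnegative terms with finite expected sum, it is finite almost surely (monotone convergence), so $\bigl\Vert u\bigr\Vert_{1-\sigma}<\infty$ $\mu_0$-a.s., which is item (ii). The extra smoothing order ``$1$'' over item (i) is exactly the factor $\lambda_k$ supplied by the covariance in the Karhunen--Lo\`eve expansion.

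The computations are one-liners, so the only point meriting care is the book-keeping for white noise: $\zeta\notin H$, and $\Gamma^{\sigma/2}\zeta$ is meaningful only through its series, whose $L^{2}(\Omega;H)$- and almost-sure convergence rest on $\text{Tr}(\Gamma^{\sigma})<\infty$ (via the It\^o--Nisio theorem, or simply the monotone-convergence argument above). I would devote attention there; the rest is the standard fact that a finite expected squared Hilbert-scale norm forces almost-sure membership in that space.
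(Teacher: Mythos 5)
Your proof is correct. Note, however, that the paper itself contains no proof of this lemma: it is imported verbatim from the reference \cite{2012arXiv1203.5753A}, so there is no internal argument to compare against. Your diagonal computation in the eigenbasis of $\Gamma$ is exactly the standard argument behind the cited result: for item (i) the identity $\mathbb{E}\bigl\Vert\Gamma^{\sigma/2}\zeta\bigr\Vert^{2}=\sum_k\lambda_k^{2\sigma}=\mathrm{Tr}(\Gamma^{\sigma})$ together with Jensen, and for item (ii) the Karhunen--Lo\`eve expansion $u=\sum_k\lambda_k\xi_k\phi_k$, which turns $\bigl\Vert u\bigr\Vert_{1-\sigma}^{2}$ into $\sum_k\lambda_k^{2\sigma}\xi_k^{2}$, finite a.s.\ because its expectation is finite. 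You also correctly resolved the two points a careless reader could stumble on: the notational clash (the $\mu_0$ of the lemma is the Gaussian $\mathcal{N}(0,\Gamma)$ generating the Hilbert scale, not the prior of the main text --- this is consistent with how the lemma is invoked in Theorem \ref{thm:NoTail} and Lemma \ref{lem:BRPnormalising}), and the fact that white noise lives outside $H$, so $\Gamma^{\sigma/2}\zeta$ must be constructed as an $L^{2}(\Omega;H)$ limit of partial sums, which is where the trace condition of Assumption \ref{ass:noiseTraceSumm} enters.
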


\section{Change of Variables for the Posterior}

The state of a model can be described in several ways. In this section,
we present the resulting relationship between two different descriptions
of the same model.
\begin{thm}
\label{thm:IndpendentOfDiscription}Suppose $\mathcal{G}_{n}=\mathcal{O}_{n}\circ G$
with $G:(X,\Vert\cdot\Vert_{X})\rightarrow(Y,\Vert\cdot\Vert_{Y})$
and $\mathcal{O}:(Y,\Vert\cdot\Vert_{Y})\rightarrow(Z,\Vert\cdot\Vert_{Z})$.
Furthermore, assume that the posterior $\posterior$($\tilde{\mu}^{y}$)
is well-defined for the forward operator $\mathcal{G}_{n}$($\mathcal{O}_{n}$),
the prior $\prior(da)$ ($\tilde{\mu}_{0}(dp)$) and the noise $\noise\sim\mathcal{N}(0,\Gamma)$.
It is given by 
\begin{eqnarray*}
\frac{d\posterior}{d\prior}(\input) & \propto\exp\left(-\frac{1}{2}\bigl\Vert\opObs(\input)\bigr\Vert_{\obsCov}^{2}+\left\langle y,\opObs(\input)\right\rangle _{\obsCov}\right)\\
\frac{d\tilde{\mu}^{y}}{d\tilde{\prior}}(\pressure) & \propto\exp\left(-\frac{1}{2}\bigl\Vert\mathcal{O}(\pressure)\bigr\Vert_{\obsCov}^{2}+\left\langle y,\mathcal{O}(\pressure)\right\rangle _{\obsCov}\right).
\end{eqnarray*}
In this case $G_{\star}\posterior=\tilde{\mu}^{y}$.
\end{thm}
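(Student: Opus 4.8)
The plan is to reduce the identity $G_\star\mu^y = \tilde\mu^y$ to the abstract change-of-variables formula for push-forward measures, exploiting the fact that, under the splitting $\mathcal{G}_n = \mathcal{O}_n\circ G$, the likelihood for $\mu^y$ depends on $a$ only through $G(a)$. Concretely, I would set
\[
\Phi(p) := \exp\left(-\frac{1}{2}\bigl\Vert\mathcal{O}(p)\bigr\Vert_\Gamma^2 + \left\langle y,\mathcal{O}(p)\right\rangle_\Gamma\right),
\]
so that the two posteriors read $\frac{d\mu^y}{d\mu_0}(a) = Z^{-1}\Phi(G(a))$ and $\frac{d\tilde\mu^y}{d\tilde\mu_0}(p) = \tilde Z^{-1}\Phi(p)$, where $\tilde\mu_0 = G_\star\mu_0$ and $Z$, $\tilde Z$ are the respective normalising constants. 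The key structural observation is that $\mathcal{G}(a) = \mathcal{O}(G(a))$ forces the unnormalised density of $\mu^y$ to be a function of $G(a)$ alone.

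First I would check that the two normalising constants coincide. By definition $Z = \int_X \Phi(G(a))\,d\mu_0(a)$ and $\tilde Z = \int_Y \Phi(p)\,d\tilde\mu_0(p)$; since $\tilde\mu_0 = G_\star\mu_0$, the change-of-variables formula $\int_X (\psi\circ G)\,d\mu_0 = \int_Y \psi\,d\tilde\mu_0$ applied with $\psi = \Phi$ yields $Z = \tilde Z$. Next, for an arbitrary bounded measurable $\varphi\colon Y\to\mathbb{R}$ I would compute, using the definition of the push-forward and then the same change-of-variables formula with $\psi = \varphi\,\Phi$,
\[
\int_Y \varphi\,d(G_\star\mu^y) = \int_X \varphi(G(a))\,Z^{-1}\Phi(G(a))\,d\mu_0(a) = Z^{-1}\int_Y \varphi(p)\,\Phi(p)\,d\tilde\mu_0(p).
\]
Because $Z = \tilde Z$, the right-hand side equals $\int_Y \varphi\,d\tilde\mu^y$. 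As $\varphi$ was arbitrary, this identifies the two measures, giving $G_\star\mu^y = \tilde\mu^y$.

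The argument is essentially routine once the factorisation through $G$ is in place, so I do not expect a serious obstacle. The only points requiring care are measurability of $G$ and the well-definedness of both posteriors --- in particular that $Z$ and $\tilde Z$ are bounded away from $0$ and $\infty$ --- but these are granted by hypothesis. The single substantive ingredient is therefore the splitting $\mathcal{G}_n = \mathcal{O}_n\circ G$, which is precisely what lets the likelihood descend through $G$ and makes the push-forward of the density agree with $\Phi$ tested against $\tilde\mu_0$.
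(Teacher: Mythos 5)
Your proposal is correct and takes essentially the same route as the paper's proof: both reduce the identity $G_{\star}\mu^{y}=\tilde{\mu}^{y}$ to the change-of-variables (transformation) rule for push-forward measures, exploiting that the likelihood factors through $G$. The only difference is cosmetic --- you test against bounded measurable functions and verify $Z=\tilde{Z}$ explicitly, whereas the paper tests against Borel sets $A\in\mathcal{B}(Y)$ and absorbs the normalising constants into an implicit constant $c$, which is justified since both sides are probability measures.
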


\begin{proof}
It is sufficient to show that both measures agree on all sets $A\in\mathcal{B}(Y)$

\[
(G_{*}\posterior)(A)=\underset{A}{\int}1dG_{*}\posterior(da).
\]
By the transformation rule
\begin{eqnarray*}
\hspace{-1cm}(G_{*}\posterior)(A) & = & \underset{G^{-1}(A)}{\int}1d\posterior(\input)\text{\text{=\ensuremath{\underset{G^{-1}(A)}{\int}}c\ensuremath{\cdot\exp\left(-\frac{1}{2}\left\Vert \mathcal{O}(G(v\input))-y\right\Vert _{\Gamma}^{2}\right)}d\ensuremath{\prior}(\ensuremath{\input})}}\\
 & = & \underset{A}{\int}c\cdot\exp\left(-\frac{1}{2}\left\Vert \mathcal{O}(v)-y\right\Vert _{\Gamma}^{2}\right)dG_{*}\mu_{0}(v).
\end{eqnarray*}
\end{proof}

\section{\label{sec:Proof-of-Theorem}Proof of Theorem \ref{thm:postConSmallGeneral}}

\begin{proof}[Proof of Theorem \ref{thm:postConSmallGeneral}]

We follow the same steps as in the proof of Theorem \ref{thm:NoTail}
up to Equation (\ref{eq:postConNoTailInterp}) reading

\begin{eqnarray*}
\left|\left\langle \input-\truth,\noise\right\rangle _{1}\right| & \leq K_{n}\left\Vert \input-\truth\right\Vert _{1}^{\HilbertScalesInter}\left\Vert \input-\truth\right\Vert _{s}^{1-\HilbertScalesInter}\mbox{\ref{eq:postConNoTailInterp}}
\end{eqnarray*}
with $\HilbertScalesInter=\frac{s-1-\sigma_{0}-\gamma}{s-1}$. We
now separate the product using Young's inequality with $\frac{1}{p}+\frac{1}{q}=1$

\begin{eqnarray}
\hspace{-2cm}\left|\left\langle \input-\truth,\noise\right\rangle _{1}\right| & \leq\left(K_{n}\left(2^{(\HilbertScalesInter-1)q}qn^{-\HilbertScalesYoungs}\right){}^{-\frac{1}{q}}\left\Vert \input-\truth\right\Vert _{1}^{\HilbertScalesInter}\right)\left(\left(2^{(\HilbertScalesInter-1)q}qn^{-\HilbertScalesYoungs}\right){}^{\frac{1}{q}}\left\Vert \input-\truth\right\Vert _{s}^{1-\HilbertScalesInter}\right)\nonumber \\
 & \leq\tilde{K_{n}}\left\Vert \input-\truth\right\Vert _{1}^{\HilbertScalesInter p}+n^{-\HilbertScalesYoungs}\left(\frac{1}{2}\left\Vert \input-\truth\right\Vert _{s}\right){}^{(1-\HilbertScalesInter)q}\label{eq:postConSmallNoiseCMInner}
\end{eqnarray}
where, for simplicity of notation, we used $\tilde{K}_{n}:=\frac{K_{n}^{p}(2^{-(1-\HilbertScalesInter)q}qn^{-\HilbertScalesYoungs})^{-\frac{p}{q}}}{p}$. \\

\subsubsection*{Lower bound on $\posterior(B_{\epsilon n^{-\kappa}}^{1}(\truth))$:}

The following lower bound on $\posterior(B_{\epsilon n^{-\kappa}}^{1}(\truth))$
is based on Equation (\ref{eq:postConSmallNoiseCMInner})
\begin{eqnarray}
\hspace{-2.5cm}\posterior\hspace{-0.1cm}\left(B_{\epsilon n^{-\kappa}}^{1}(\truth)\right)\nonumber \hspace{-0.15cm} &\ge\posterior\hspace{-0.1cm}\left(B_{\frac{\epsilon}{2}n^{-\kappa}}^{1}(\truth)\cap B_{R}^{s}(0)\right)\ge Z(n,\xi)\prior\left(B_{\frac{\epsilon n^{-\kappa}}{2}}^{1}\left(\truth\right)\cap B_{R}^{s}(0)\right)\nonumber \\
 &\quad \cdot\exp\hspace{-0.1cm}\left[-\frac{n^{1-2\kappa}}{2}\frac{\epsilon^{2}}{4}-n^{\frac{1}{2}-\lambda p\kappa}\tilde{K}_{n}\left(\frac{\epsilon}{2}\right)^{\HilbertScalesInter p}\hspace{-0.2cm}-n^{\frac{1}{2}-\HilbertScalesYoungs}\left[R^{(1-\HilbertScalesInter)q}+\bigl\Vert\truth\bigr\Vert_{s}^{(1-\lambda)q}\right]\right].\label{eq:postConHilbertScaleDeltaBall}
\end{eqnarray}
The term $n^{1-2\kappa}$ has to be dominant in Equation (\ref{eq:postConHilbertScaleDeltaBall})
because the same exponent is appearing in Equation (\ref{eq:postConHilbertScaleDeltaBallComplement})
except for a larger coefficient. Choosing $R=n^{\theta}$ and substituting
the expression for $\tilde{K}_{n}$, this is the case if
\begin{eqnarray}
1-2\kappa & >\frac{1}{2}+\eta\frac{p}{q}-\kappa\lambda p\label{eq:postConSmallRate1}\\
1-2\deltarate & >\frac{1}{2}-\HilbertScalesYoungs+(1-\HilbertScalesInter)q\nRadiusSnorm\label{eq:postConSmallRate2}\\
\log\prior\left(B_{\frac{\epsilon n^{-\kappa}}{2}}^{1}\left(\truth\right)\cap B_{R}^{s}(0)\right) & \gtrsim n^{1-2\kappa}.\label{eq:postConSmallTmp1}
\end{eqnarray}
We need small ball probabilities and the exponential moments of $\prior$
in order to obtain explicit sufficient conditions on $\kappa$. We
first note that

\begin{eqnarray*}
\prior\left(B_{\frac{\epsilon n^{-\kappa}}{2}}^{1}\left(\truth\right)\cap B_{R}^{s}(0)\right) & \ge & \prior\left(B_{\frac{\epsilon n^{-\kappa}}{2}}^{1}\left(\truth\right)\right)-\prior\left(B_{R}^{s}(0)^{c}\right).
\end{eqnarray*}
Equation (\ref{eq:postConSmallTmp1}) holds if 
\begin{eqnarray}
\rho\kappa & < & e\theta\label{eq:postConSmallRate3}\\
 \rho\kappa&<&1-2\kappa.\label{eq:postConSmallrate4}
\end{eqnarray}

\subsubsection*{Upper bound on $\posterior\left(B_{\epsilon n^{-\kappa}}^{1}(\truth)^{c}\right)$:}

We bound $\posterior\left(B_{\epsilon n^{-\kappa}}^{1}\left(\truth\right){}^{c}\right)$
by
\[
\posterior\left(B_{\epsilon n^{-\kappa}}^{1}(\truth)^{c}\right)\leq\posterior\left(B_{\epsilon n^{-\kappa}}^{1}(\truth)^{c}\cap B_{R}^{s}(0)\right)+\posterior\left(B_{\epsilon n^{-\kappa}}^{1}(\truth)^{c}\cap B_{R}^{s}(0)^{c}\right).
\]

\subsubsection*{Upper bound on $\posterior\left(B_{\epsilon n^{-\kappa}}^{1}(\truth)^{c}\cap B_{R}^{s}(0)\right)$:}

We denote by $M_{B_{\epsilon n^{-\kappa}}^{1}(\truth)^{c}\cap B_{R}^{s}(0)}$
the following supremum

\[
\hspace{-2cm}\underset{B_{\epsilon n^{-\kappa}}^{1}(\truth)^{c}\cap B_{R}^{s}(0)}{\sup}-\frac{n}{2}\left\Vert \input-\truth\right\Vert _{1}^{2}+\sqrt{n}\tilde{K}_{n}\left\Vert \input-\truth\right\Vert _{1}^{\HilbertScalesInter p}+n^{\frac{1}{2}-\HilbertScalesYoungs}\left(\frac{1}{2}\left\Vert \input-\truth\right\Vert _{s}\right){}^{(1-\HilbertScalesInter)q}
\]
which is finite if 
\begin{equation}
\lambda p<2.\label{eq:postConSmallRate5}
\end{equation}
The first two summands above can be rewritten as a function $f$ of
$\left\Vert \input-\truth\right\Vert _{1}$ where 
\begin{eqnarray*}
f(d) & =-\frac{n}{2}d^{2}+\sqrt{n}\tilde{K_{n}}d^{\lambda p}.
\end{eqnarray*}
By considering $f^{\prime}$, we see that $f$ is decreasing for $d\ge\bigl(\tilde{K_{n}}\lambda pn^{-\frac{1}{2}}\bigr)^{\lambda p}$.
Thus, for

\begin{eqnarray}
\epsilon n^{-\kappa} & \ge\bigl(\tilde{K_{n}}\lambda pn^{-\frac{1}{2}}\bigr)^{\lambda p}\label{eq:postConHilbertScaleNecDecreasing}
\end{eqnarray}
the following inequality holds
\begin{eqnarray}
\hspace{-2cm}\posterior\left(B_{\epsilon n^{-\kappa}}^{1}(\truth)^{c}\cap B_{R}^{s}(0)\right)\nonumber\\ \hspace{-1.7cm}\leq Z(n,\xi)\exp\left[-\frac{n^{1-2\kappa}}{2}\epsilon^{2}+n^{\frac{1}{2}-\lambda p\kappa}\tilde{K}_{n}\epsilon^{\HilbertScalesInter p}+\right.
 \left.\quad n^{\frac{1}{2}-\HilbertScalesYoungs}\left(R^{(1-\HilbertScalesInter)q}+\bigl\Vert\truth\bigr\Vert_{s}^{(1-\HilbertScalesInter)q}\right)\right].\label{eq:postConHilbertScaleDeltaBallComplement} 
\end{eqnarray}
Then for large $n$, Equation (\ref{eq:postConHilbertScaleNecDecreasing})
is implied by 
\begin{eqnarray}
\left(\eta\frac{p}{q}-\frac{1}{2}\right)\lambda p & <-\kappa.\label{eq:postConSmallRate6}
\end{eqnarray}

\subsubsection*{Upper bound on $\posterior\left(B_{\epsilon n^{-\kappa}}^{1}(\truth)^{c}\cap B_{R}^{s}(0)^{c}\right)$:}

In this section, we bound $\posterior(B_{\epsilon}^{1}(\truth)^{c}\cap B_{R}^{s}(0)^{c})$
using Markov's inequality in combination with the exponential moments
of the prior 
\begin{eqnarray}
\hspace{-1cm} \posterior\left(\exp(f\left\Vert \cdot\right\Vert _{s}^{e})\chi_{B_{\epsilon n^{-\kappa}}^{1}(\truth)^{c}}\right)\leq\underset{B_{\epsilon n^{-\kappa}}^{1}(\truth)^{c}}{\int}C(n,\xi)\exp\left(n^{\frac{1}{2}-\HilbertScalesYoungs}\left\Vert \input\right\Vert _{s}{}^{(1-\HilbertScalesInter)q}\right)\nonumber \\
 \hspace{-1cm}\exp\left(-\frac{n}{2}\left\Vert \input-\truth\right\Vert _{1}^{2}+\sqrt{n}\tilde{K}_{n}\left\Vert \input-\truth\right\Vert _{1}^{\HilbertScalesInter p}+n^{\frac{1}{2}-\HilbertScalesYoungs}\left\Vert \input^{\dagger}\right\Vert _{s}^{(1-\HilbertScalesInter)q}\right)d\prior(\input).\label{eq:postConHilbertScalesPosteriorMoment}
\end{eqnarray}
We denote the term appearing in the exponential in the second line
by $T_{0}$. It can be bounded similar to the upper bound on\textbf{
$\posterior\left(B_{\epsilon n^{-\kappa}}^{1}(\truth)^{c}\cap B_{R}^{s}(0)\right)$}
\begin{eqnarray*}
T_{0} & \leq\mathfrak{U}_{T_{0}}:=-\frac{n^{1-2\kappa}}{2}\epsilon^{2}+n^{\frac{1}{2}-\lambda p\kappa}\tilde{K}_{n}\epsilon^{\HilbertScalesInter p}+n^{\frac{1}{2}-\HilbertScalesYoungs}\left\Vert \input^{\dagger}\right\Vert _{s}^{(1-\HilbertScalesInter)q}.
\end{eqnarray*}
We denote by $\lesssim$ an inequality with a multiplicative constant
not involving $n$ or $\kappa$. In order to get an upper bound for
Equation (\ref{eq:postConHilbertScalesPosteriorMoment}), we bound
the exponential moment by 
\begin{eqnarray*}
\hspace{-2cm} \posterior\left(\exp(f\left\Vert \cdot\right\Vert _{s}^{e})\chi_{B_{\epsilon n^{-\kappa}}^{1}(\truth)^{c}}\right)\lesssim C(n,\xi)\underset{}{\int}\exp\left(n^{\frac{1}{2}-\HilbertScalesYoungs}\left\Vert \input\right\Vert _{s}^{(1-\HilbertScalesInter)q}+f\left\Vert \input\right\Vert _{s}^{e}+\mathfrak{U}_{T_{0}}\right)d\prior(d).
\end{eqnarray*}
Introducing 
\begin{eqnarray*}
g(r) & =n^{\frac{1}{2}-\HilbertScalesYoungs}r^{(1-\HilbertScalesInter)q}+fr^{e},\\
g^{\prime}(r) & =n^{\frac{1}{2}-\HilbertScalesYoungs}(1-\HilbertScalesInter)qr^{(1-\HilbertScalesInter)q-1}+efr^{e-1}
\end{eqnarray*}
and performing an integration by parts, it follows that
\begin{eqnarray*}
\hspace{-2cm}\frac{\posterior(\exp(f\left\Vert \cdot\right\Vert _{s}^{e})\chi_{B_{}^{1}(\truth)^{c}})}{C(n,\xi)} & \lesssim\underset{}{\int}\exp\left(g(\left\Vert \input\right\Vert _{s})+\mathfrak{U}_{T_{0}}\right)d\prior(\input)\\
 & \lesssim\exp(\mathfrak{U}_{T_{0}})\int\left[\int_{0}^{\left\Vert \input\right\Vert _{s}}g^{\prime}(r)\exp(g(r))dr\right]+1d\prior(\input)\\
 & \lesssim\int_{0}^{\infty}g^{\prime}(r)\exp\left(g(r)+\mathfrak{U}_{T_{0}}\right)d\prior\left(\left\Vert \input\right\Vert _{s}>r\right)dr\\
 & \lesssim\int_{0}^{\infty}g^{\prime}(R)\exp\left(n^{\frac{1}{2}-\HilbertScalesYoungs}r^{(1-\HilbertScalesInter)q}-2fr^{e}\right)dr.
\end{eqnarray*}
The above can only be expected to be finite if 
\begin{eqnarray}
(1-\HilbertScalesInter)q & <e.\label{eq:postConSmallRate7}
\end{eqnarray}
Moreover, we assume that $\HilbertScalesYoungs<\frac{1}{2}$ since
otherwise 
\begin{eqnarray*}
\underset{}{\int}\exp\left(n^{\frac{1}{2}-\HilbertScalesYoungs}\left\Vert \input\right\Vert _{s}^{(1-\HilbertScalesInter)q}+f\left\Vert \input\right\Vert _{s}^{e}\right)d\prior(\input) & \lesssim\underset{}{\int}\exp\left(2f\left\Vert \input\right\Vert _{s}^{e}\right)d\prior(\input).
\end{eqnarray*}
In order to achieve an upper bound, we split the term in the exponential
into $T_{1}:=n^{\frac{1}{2}-\HilbertScalesYoungs}r^{(1-\HilbertScalesInter)q}-fr^{e}$
and $T_{2}:=-fr^{e}$. The first term is negative whenever
\begin{eqnarray*}
r & \ge r_{z}:=\left(n^{\frac{1}{2}-\HilbertScalesYoungs}f^{-1}\right)^{\frac{1}{e-(1-\HilbertScalesInter)q}}.
\end{eqnarray*}
For $n$ large enough $r_{z}\ge1$ holds. On the interval $[0,s_{z}]$
an upper bound $\mathfrak{U}_{T_{1}}$ on the maximum value of $T_{1}$
can be derived as follows
\begin{eqnarray}
T_{1}^{\prime} & = & 0\Rightarrow r=\left((1-\HilbertScalesInter)qn^{\frac{1}{2}-\HilbertScalesYoungs}e^{-1}f^{-1}\right)^{\frac{1}{e-(1-\HilbertScalesInter)q}}\nonumber \\
\mathfrak{U}_{T_{1}} & := & \left(\frac{(1-\HilbertScalesInter)q}{ef}\right)^{\frac{1}{e-(1-\HilbertScalesInter)q}}\left(n^{\frac{1}{2}-\HilbertScalesYoungs}\right)^{1+\frac{1}{e-(1-\HilbertScalesInter)q}}.\label{eq:postConHilbetScalesMomentBound}
\end{eqnarray}
Putting everything together gives rise to 
\begin{eqnarray*}
\hspace{-2.5cm} \frac{\posterior\left(\exp(f\left\Vert \cdot\right\Vert _{s}^{e})\chi_{B_{n^{-\kappa}}^{1}(\truth)^{c}}\right)}{C(n,\xi)}&\lesssim\int\left(n^{\frac{1}{2}-\HilbertScalesYoungs}(1-\HilbertScalesInter)qr^{(1-\HilbertScalesInter)q-1}+efr^{e-1}\right)\exp\left(\mathfrak{U}_{T_{1}}+\mathfrak{U}_{T_{0}}\right)dr\\
 &+\int_{r_{z}}^{\infty}\hspace{-0.2cm}\left(n^{\frac{1}{2}-\HilbertScalesYoungs}(1-\HilbertScalesInter)qr^{(1-\HilbertScalesInter)q-1}+efr^{e-1}\right)\exp\left(\mathfrak{U}_{T_{0}}-fr^{e}\right)dr\\
 &\lesssim n^{a}\exp\left(\mathfrak{U}_{T_{1}}+\mathfrak{U}_{T_{0}}\right)
\end{eqnarray*}
for some $a$. Using Markov's inequality, this yields
\begin{eqnarray}
\posterior\left(B_{}^{1}(\truth)^{c}\cap B_{R}^{s}(0)^{c}\right) & \lesssim C(n,\xi)n^{\frac{1}{2}-\HilbertScalesYoungs}\exp\left(\mathfrak{U}_{T_{0}}+\mathfrak{U}_{T_{1}}-fR^{e}\right).\label{eq:postConHilbertScalesLargeBallComp}
\end{eqnarray}
Again substituting $R=n^{\theta}$, this is asymptotically smaller
than $\exp\left(-\frac{n^{1-2\kappa}}{2}\frac{^{2}}{4}\right)$ if

\begin{equation}
\left(\frac{1}{2}-\HilbertScalesYoungs\right)\left(1+\frac{1}{e-(1-\HilbertScalesInter)q}\right)<\max\left(1-2\kappa,\nRadiusSnorm e\right).\label{eq:postConSmallRate8}
\end{equation}
Collecting the inequalities from above, we see that the results follow
by letting $\gamma\rightarrow0$. 

\end{proof}
\section{Normalising Constant of the BRP}\label{sec:normal}
\begin{proof}[Proof of Lemma \ref{lem:BRPnormalising}]
In order to bound $Z(n,\noise)$ in Equation (\ref{eq:identityPosterior}), we rewrite it as $$\mu^{y_n}=Z(n,\noise)\exp\left(-\Phi\right)\prior,$$ where $Z(n,\noise)=\prior(\exp\left(-\Phi\right))$. We bound $-\Phi$ using the Cauchy-Schwarz inequality
\begin{eqnarray*}
-\Phi & \leq & -\frac{1}{2}n\left\Vert \input\right\Vert _{1}^{2}+n\left\Vert \truth\right\Vert _{1}\left\Vert \input\right\Vert _{1}+n^{\frac{1}{2}}\left\langle a,\xi\right\rangle _{1}.
\end{eqnarray*}
The following steps are quite similar to the steps in the proof of the Theorems
\ref{thm:NoTail} and \ref{thm:postConSmallGeneral}. We treat $\left\langle \input,\noise\right\rangle _{1}$
by smoothing $\xi$ at the expense of $a$ 
\begin{eqnarray*}
\left|\left\langle \input,\noise\right\rangle _{1}\right| & \leq & \left|\left\langle \obsCov^{-1+\frac{1-\sigma_{0}-\gamma}{2}}\input,\obsCov^{\frac{\sigma_{0}-1+\gamma}{2}}\noise\right\rangle \right|\\
 & \leq & \left\Vert \input\right\Vert _{1+\sigma_{0}+\gamma}\left\Vert \xi\right\Vert _{1-\sigma_{0}-\gamma}.
\end{eqnarray*}

We use the interpolation inequality for Hilbert scales with $\HilbertScalesInter=\frac{s-1-\sigma_{0}-\gamma}{s-1}$
(see Lemma \ref{lem:HScaleInterpolation}) and H\"older's inequality
with $\frac{1}{p}+\frac{1}{q}=1$ to obtain
\begin{eqnarray*}
\left\Vert \input\right\Vert _{1+\sigma_{0}+\gamma} & \leq & \left\Vert \input\right\Vert _{1}^{\HilbertScalesInter}\left\Vert \input\right\Vert _{s}^{1-\HilbertScalesInter}\leq\frac{\left\Vert \input\right\Vert _{1}^{p\lambda}}{p}+\frac{\left\Vert \input\right\Vert _{1}^{q(1-\lambda)}}{q}.
\end{eqnarray*}
Combining these bounds yields
\begin{eqnarray*}
\hspace{-1cm}-\Phi & \leq & -\frac{1}{2}n\left\Vert \input\right\Vert _{1}^{2}+n\left\Vert \truth\right\Vert _{1}\left\Vert \input\right\Vert _{1}+\frac{\left\Vert \input\right\Vert _{1}^{p\lambda}}{p}\left\Vert \xi\right\Vert _{1-\sigma_{0}-\gamma}+\frac{\left\Vert \input\right\Vert _{1}^{q(1-\lambda)}}{q}\left\Vert \xi\right\Vert _{1-\sigma_{0}-\gamma}.
\end{eqnarray*}
The first three terms are bounded in $a$ because they are dominated by  the first if $\lambda p<2$. This is implied by choosing $q=\frac{2+\gamma}{2-\lambda}.$ Note that $\left\Vert \xi\right\Vert _{1-\sigma_{0}-\gamma}$
is $\mathbb{\mu}_{\noise_n}$-a.s. bounded due to Lemma \ref{lem:regularityNoise}. Thus  $Z(n,q)$ is bounded below if $e>q$. Letting $\gamma\downarrow0$ in $q$ we see that this is the case for  
\begin{eqnarray*}
e & > & \frac{2\sigma_{0}}{s-1+\sigma_{0}}.
\end{eqnarray*}
An upper bound on $Z(n,q)$ follows from a simple lower bound on $-\Phi$ on $B^{1+\sigma+\gamma}_M(0)$ and the prior measure of this set.


\end{proof}
{\small{\bibliographystyle{plain}
\bibliography{./../../../../docear}
}}

\end{document}